\newlength\fontsizelength
\newif\ifdraftmode
\newif\ifarxivversion
\newcommand{\numberappendices}{\textbf{???}}
\renewcommand{\numberappendices}{two}}{}%
\renewcommand{\numberappendices}{three}}{}%
\renewcommand{\numberappendices}{four}}{}%
\renewcommand{\numberappendices}{five}}{}%
\ifdraftmode\usepackage{showkeys, booktabs}\fi
\newcommand{\maC}{\mathcal C}
\newcommand{\maE}{\mathcal E}
\newcommand{\maF}{\mathcal F}
\newcommand{\maH}{\mathcal H}
\newcommand{\maL}{\mathcal L}
\newcommand{\maP}{\mathcal P}
\newcommand{\maQ}{\mathcal Q}
\newcommand{\maS}{\mathcal S}
\newcommand{\maV}{\mathcal V}
\newcommand{\maW}{\mathcal W}
\newcommand{\maY}{\mathcal Y}
\newcommand\bl[1]{ \{ #1 \} }
\newcommand\XGV{X_{\mathrm{GV}}}
\newcommand\betaGV{\beta_{\mathrm{GV}}}
\newcommand\maWb{\maW_{\mathrm{bdry}}}
\newcommand\maWint{\maW_{\mathrm{int}}}
\newcommand\maWeucl{\maW_{\eucl}}
\newcommand\geucl{g_{\mathrm{eucl}}}
\newcommand\eucl{\mathrm{eucl}}
\newcommand\argu{\mathchoice{\,\raisebox{.13\fontsizelength}{\scalebox{.5}{$\bullet$}}\,}{\raisebox{.13\fontsizelength}{\scalebox{.5}{\,$\bullet$\,}}}{\,\raisebox{.07\fontsizelength}{\scalebox{.4}{$\bullet$}}\,}{\,\raisebox{.06\fontsizelength}{\scalebox{.35}{$\bullet$}}\,}}
\newcommand\twoargu{\argu{,}\argu} % note: {,} instead of , to avoid strange spacing
\newcommand\refschrSpace[1]{\ref*{schrSpace-#1}}
\newcommand\citeschrSpace[2]{\cite[#1~\refschrSpace{#2}]{AMN1}}
\definecolor{lightgreen}{rgb}{.9,1,.9}
\definecolor{darkgreen}{rgb}{.2,.6,.2}
\definecolor{darkblue}{rgb}{.2,.2,.8}
\definecolor{darksharpblue}{rgb}{.4,.2,.8}
\newcommand\red[1]{\textcolor{red}{#1}}
\newcommand\magenta[1]{\textcolor{magenta}{#1}}
\newcommand\orange[1]{\textcolor{orange}{#1}}
\newcommand\hidden[1]{} 
\newcommand\cbernd[1]{\footnote{\magenta{#1}}}
\newcommand\bernd[1]{\magenta{#1}}
\newcommand\victor[1]{\red{#1}}
\newcommand\jeremy[1]{\orange{#1}}
\newcommand\showdiscussion[1]{#1}
 \renewcommand\sout{\bgroup\markoverwith{\textcolor{red}{\rule[0.7ex]{3pt}{1.4pt}}}\ULon}
\newcommand\cbernd[1]{}
\newcommand\victor[1]{}
\newcommand\bernd[1]{}
\newcommand\jeremy[1]{}
\newcommand\showdiscussion[1]{}
\renewcommand\sout[1]{}
\DeclareMathOperator{\Lie}{Lie}
\DeclareMathOperator{\Aff}{Aff}
\DeclareMathOperator{\aff}{aff}
\newcommand{\hookdownarrow}{\mathrel{\rotatebox[origin=c]{-90}{$\hookrightarrow$}}}
\newcommand\ovra\overrightarrow
\newcommand\GL{\mathop{\mathrm{GL}}} 
\newcommand\spann{\mathop{\mathrm{span}}}
\newcommand\eg{e.\kern2pt g.,\ \ignorespaces}
\newcommand\ie{i.\kern2pt e.\ \ignorespaces}
\newcommand\medcup{\mathsmaller{\bigcup}}
\newcommand\unionma[1]{\mathchoice{\medcup\,\mathcal{#1}}{\medmath{\textstyle \bigcup}\,\mathcal{#1}}{\medmath{\scriptstyle\bigcup}\mathcal{#1}}{\medmath{\scriptscriptstyle\bigcup}\mathcal{#1}}}
\newcommand\unionx[1]{\mathchoice{\medcup\,{#1}}{\medmath{\textstyle \bigcup}\,{#1}}{\medmath{\scriptstyle\bigcup}{#1}}{\medmath{\scriptscriptstyle\bigcup}{#1}}}
\newcommand\unionF{\unionma{F}}
\newcommand\unionP{\unionma{P}}
\newcommand\unionS{\unionma{S}}
\newtheorem{theorem}{Theorem}[section]
\newtheorem{lemma}[theorem]{Lemma}
\newtheorem{proposition}[theorem]{Proposition}
\newtheorem{corollary}[theorem]{Corollary}
\theoremstyle{definition} % All remarks, examples etc defined below
  \newtheorem{definition}[theorem]{Definition}
\newtheorem{remark}[theorem]{Remark}
\newtheorem{example}[theorem]{Example}
\newcommand{\psubmanifold}{p-sub\-ma\-ni\-fold}
\newcommand{\psubmanifolds}{p-sub\-ma\-ni\-folds}
\let\psbmanifold\psubmanifold
\let\psbmanifolds\psubmanifolds
\newcommand{\wsbmanifold}{weak submanifold}
\newcommand{\wsbmanifolds}{weak submanifolds}
\newcommand{\dist}{\operatorname{dist}}
\newcommand{\id}{\operatorname{\mathrm{id}}}
\newcommand{\pa}{\partial}
\renewcommand{\SS}{\mathbb{S}}
\newcommand{\CC}{\mathbb C}
\newcommand{\NN}{\mathbb N}
\newcommand{\RR}{\mathbb R}
\newcommand{\ZZ}{\mathbb Z}
\newcommand{\CI}{{\mathcal C}^{\infty}}
\newcommand{\CIc}{{\mathcal C}^{\infty}_{\text{c}}}
\newcommand\<{\langle}
\renewcommand\>{\rangle} 
\newcommand\ede{\ \coloneqq\ }
\newcommand\seq{\ =\ }
\newcommand\interior[1]{#1_0}
\let\ol\overline
\let\wihat\widehat
\newcommand{\oX}{\overline{X}}
\newcommand{\oY}{\overline{Y}}
\newcommand{\oXY}{\overline{X/Y}}
\newcommand\wha[1]{\widehat{#1}}
\newcommand{\doublesharp}[1]{\prescript{\lower1.95pt\hbox{\reflectbox{$\scriptstyle\sharp$}\kern-.8pt}}{}{#1^\sharp}}
\let\setminus\smallsetminus
\newcommand{\Diff}{\operatorname{Diff}}
\newcommand\dvol{\operatorname{dvol}\nolimits}
\newlength{\keyheightlength}
\newcommand{\stelle}[1]{\mathchoice%
  {\lower.27\keyheightlength\hbox{\big|}\lower.57\keyheightlength\hbox{$\scriptstyle #1$}}%
  {\lower.24\keyheightlength\hbox{\big|}\lower.48\keyheightlength\hbox{$\scriptstyle #1$}}%
  {\lower.21\keyheightlength\hbox{|}\lower.42\keyheightlength\hbox{$\scriptscriptstyle #1$}}%
  {\lower.18\keyheightlength\hbox{|}\lower.39\keyheightlength\hbox{$\scriptscriptstyle #1$}}%
}
\newcommand{\Stelle}[1]{\mathchoice%
  {\lower.39\keyheightlength\hbox{\Big|}\lower.74\keyheightlength\hbox{$\scriptstyle #1$}}%
  {\lower.33\keyheightlength\hbox{\big|}\lower.66\keyheightlength\hbox{$\scriptstyle #1$}}%
  {\lower.30\keyheightlength\hbox{\big|}\lower.60\keyheightlength\hbox{$\scriptscriptstyle #1$}}%
  {\lower.25\keyheightlength\hbox{\big|}\lower.51\keyheightlength\hbox{$\scriptscriptstyle #1$}}%
}
\newcommand{\sstelle}[1]{\mathchoice%
  {\lower.39\keyheightlength\hbox{\bigg|}\lower.98\keyheightlength\hbox{$\scriptstyle #1$}}%
  {\lower.37\keyheightlength\hbox{\Big|}\lower.74\keyheightlength\hbox{$\scriptstyle #1$}}%
  {\lower.33\keyheightlength\hbox{\big|}\lower.75\keyheightlength\hbox{$\scriptstyle #1$}}%
  {\lower.29\keyheightlength\hbox{\big|}\lower.63\keyheightlength\hbox{$\scriptstyle #1$}}%
}
\newcommand{\SStelle}[1]{\mathchoice%
  {\lower.40\keyheightlength\hbox{\Bigg|}\lower1.35\keyheightlength\hbox{$\scriptstyle #1$}}%
  {\lower.37\keyheightlength\hbox{\bigg|}\lower.98\keyheightlength\hbox{$\scriptstyle #1$}}%
  {\lower.35\keyheightlength\hbox{\Big|}\lower.90\keyheightlength\hbox{$\scriptscriptstyle #1$}}%
  {\lower.30\keyheightlength\hbox{\big|}\lower.72\keyheightlength\hbox{$\scriptscriptstyle #1$}}%
}
\let\oldtocsection=\tocsection
\let\oldtocsubsection=\tocsubsection
\let\oldtocsubsubsection=\tocsubsubsection
\renewcommand{\tocsection}[2]{\hspace{0em}\oldtocsection{#1}{#2}}
\renewcommand{\tocsubsection}[2]{\hspace{2em}\oldtocsubsection{#1}{#2}}
\renewcommand{\tocsubsubsection}[2]{\hspace{3em}\oldtocsubsubsection{#1}{#2}}
\newcommand\doi[1]{\href{https://doi.org/#1}{DOI:~#1}}
\author[B. Ammann]{Bernd Ammann} \address{B. A., Fakult\"at f\"ur
  Mathematik, Universit\"at Regensburg, 93040 Regensburg, Germany}
\email{bernd.ammann@mathematik.uni-regensburg.de}
\author[J. Mougel]{J\'{e}r\'{e}my Mougel} \address{J. M.,
  Mathematisches Institut Georg-August-Universit\"at G\"ottingen,
  37083 G\"ottingen, Germany} \email{j.mougel@hotmail.com}
\author[V. Nistor]{Victor Nistor}\address{V. N., Universit\'{e} de
  Lorraine, CNRS, IECL, F-57000 Metz, France
%   Math. Dept., Penn State Univ, PA 16802, USA}
and Inst. Math. Romanian Acad.  PO BOX 1-764, 014700 Bucharest
Romania} \email{victor.nistor@univ-lorraine.fr}
\thanks{B.A. has been partially supported by SPP 2026 (Geometry at
  infinity) and the SFB 1085 (Higher Invariants), both funded by the
  DFG (German Science Foundation). V.N. has been partially
  supported by ANR-14-CE25-0012-01 (SINGSTAR) funded by ANR (French
  Science Foundation).\\
%
%Manuscripts available from {\bf http:{\scriptsize
%    //}iecl.univ-lorraine.fr{\scriptsize
%    /}$\tilde{}$Victor.Nistor{\scriptsize /}}\\
%
  AMS Subject classification (2010): 35J10 (Primary), 35B65, 35J47, 58J05 (Secondary)%  58J40, 47L80, 58H05, 46L87, 46L80}
\\
keywords: Schrödinger equation, regularity, eigenfunctions, N-body problem, Georgescu-Vasy compactification}
\date\today
\begin{document}
\title[Regularity for Schr\"odinger operators]{A regularity result for the 
bound states of $N$-body Schr\"odinger operators: Blow-ups and Lie manifolds}

\begin{abstract}
We prove regularity estimates in weighted Sobolev spaces for the 
$L^2$-eigen\-func\-tions of Schrö\-dinger type operators whose potentials have inverse 
square singularities and uniform radial limits at infinity. In particular, 
the usual $N$-body Hamiltonians with Coulomb-type singular potentials
are covered by our result: in that case, the weight 
is $\delta_{\maF}(x) \coloneqq  \min \{ d(x, \unionF), 1\}$, where
$d(x, \unionF)$ is the usual Euclidean distance to the union $\unionF$
of the set of collision planes $\maF$. The proof is based on blow-ups of manifolds 
with corners and Lie manifolds. More precisely, we start with the radial 
compactification~$\oX$ of the underlying space~$X$ and we first blow up the spheres 
$\SS_Y \subset \SS_X$ at infinity of the collision planes $Y \in \maF$
to obtain the Georgescu--Vasy compactification. Then, we blow up the collision 
planes~$\maF$. We carefully investigate how the Lie manifold structure 
and the associated data (metric, Sobolev spaces, differential operators)
change with each blow-up. Our method applies also to higher order differential
operators, to certain classes of pseudodifferential operators, and 
to matrices of scalar operators.
\end{abstract}

\maketitle
\tableofcontents

\section{Introduction}

We obtain regularity estimates for the eigenfunctions of a class of elliptic
differential operators with singular coefficients. Our results cover 
Schr\"o\-din\-ger-type operators whose potentials have inverse square type
singularities and uniform radial limits at infinity. Weaker singularities, 
such as Coulomb-type singularities, are included in our results; in particular,
the ``usual'' $N$-body Hamiltonians \cite{GeorgescuBookNew, 
DerezinskiAnnals, DerGer2} are covered by our results. Our results are obtained by 
combining and extending the results of our previous 
papers \cite{ACN} and \cite{AMN1}, which will be two basic references in what 
follows.  

\subsection{Basic notation and constructions}\label{subsec.basic-notions} 
Here is first a minimum of notation needed to state our results
for the case of Coulomb-type singularities,
Theorem~\ref{theorem.intro.reg} below. 
This notation was explained in more detail in \cite{AMN1}. Let us fix from 
now on a \emph{finite-dimensional,} Euclidean (real) vector space $X$. We shall 
let $\SS_X$ denote its \emph{sphere at infinity} (more precisely: we define 
$\SS_X$ as the set of rays in $X$ emanating form $0$) and, then, we let 
$\oX=X \sqcup \SS_X$, called the \emph{spherical compactification} of $X$, as described, for instance, in  \cite[Section~5.1]{AMN1}
(see also \cite{Kottke-Lin, KottkeMelrose2, VasyReg, VasySurv}). 
Note that here $\sqcup$ denotes disjoint union as sets, but $\oX$ also carries a 
differential structure that makes  $\oX$ diffeomorphic to a closed disk, see again \cite[Section~5.1]{AMN1}. In particular,
we have a diffeomorphism
\begin{equation}\label{eq.def.sphr.comp}
   \oX \smallsetminus \{0\} \simeq \SS_{X} \times (0, \infty]\,.
\end{equation}
Moreover, the Euclidean metric on $X$ defines a natural diffeomorphism from $\SS_X$ 
to the unit sphere in $X$. 

Let us assume throughout the paper that $\maF$ is a finite, 
non-empty set of linear subspaces of $X$. We need to define various
distance functions in terms of $\maF$. First, by $\dist(x, y) = \dist_{\eucl}(x, y)$,
we shall denote the Euclidean distance on $X$ and, for any subset $Z \subset X$, we let 
\begin{equation}\label{eq.def.dist}
   d_Z(x) \ede \dist(x, Z)\ede  \inf_{z\in Z}|x-z|
\end{equation}
denote the Euclidean distance from $x$ to $Z$.
Let then $\unionF \coloneqq  \bigcup_{Y \in \maF}Y$ 
denote the union of the elements of $\maF$ and
\begin{equation}\label{eq.def.dist.maF}
   \delta_{\maF}(x) \ede \min_{Y \in \maF} \{ d_Y(x), 1\} \seq 
   \min\{\dist(x, \unionF), 1\}\,.
\end{equation}
This function will be the weight used
in the definition of the weighted Sobolev spaces that appear (sometimes
only implicitly) in the statements of our results.

We shall assume throughout the paper, 
as in, for instance, \cite{BMBGeorgescu, DerezinskiAnnals}, that $\maF$ 
is stable under intersections. More precisely, let $2^A$ denote the set of all subsets 
of some set $A$. We shall say that $\maS \subset 2^A$ is a \emph{semilattice}
\footnote{In the literature, what we call a semilattice is often called a 
 \emph{meet-semilattice}.}
if $A,B\in \maS$ implies $A\cap B \in \maS$. Thus, from now on, we shall assume that 
$\maF$ is a \emph{finite semilattice} of linear subspaces of $X$ such that $X \notin \maF$
but $\{0\} \in \maF$.
(Mathematically the assumption that $X \notin \maF$ is not a very important assumption, 
but it allows us to simplify the statement of
Theorem~\ref{theorem.intro.reg}, for instance.)

If $Y$ is a linear subspace of $X$, its closure $\oY$ in $\oX$ coincides with the 
spherical compactification of $Y$, so there is no danger of confusion. As in \cite{AMN1}, 
to the semilattice $\maF$ we will associate the semilattices of \emph{spherical 
compactifications} and, respectively, \emph{spheres at infinity} of subspaces in $\maF$:
\begin{equation}\label{eq.def.semilattices}
    \overline{\maF} \ede \{\, \oY \, | \ 
    Y \in \maF \, \} 
 \ \ \mbox{ and } \ \
    \SS_{\maF} \ede 
    \{\, \mathbb{S}_Y \, | \ Y \in \maF \, \}\,. 
\end{equation}
Finally, we can now introduce the main spaces considered in our main results
as iterated blow-ups:
\begin{equation}\label{eq.blownup.spaces}
    \XGV \ede [\oX: \SS_\maF] \ \ \mbox{ and } \ \
    X_{\maF} \ede  [\oX : \SS_\maF \cup \overline{\maF}]\,.
%    \seq [\oX : \maS] 
\end{equation}
The notions of blow-up and iterated blow-up will be recalled in Section~\ref{sec2bis}. 
We will see in~\ref{sec.two.bups} that $X_{\maF} \simeq [\XGV: \wha{\maF} ]$, where 
$\wha\maF$ denotes the lift of $\overline\maF$ to $\XGV$, see 
Subsection~\ref{subsec.semilat.b-up} for details.

\subsection{Statement of the main result and comments} 
The following result for eigenfunctions is formulated, for simplicity, just for 
the usual Coulomb type singularities (see Theorems~\ref{theorem.regularity}
and~\ref{theorem.application} for more general versions). There is no loss of generality 
to assume $X = \RR^n$, and we shall do that when convenient, for instance, in the 
next theorem. Let $\pa_i := \frac{\pa}{\pa x^i}$ and
$\pa^\alpha := \pa_1^{\alpha_1} \pa_2^{\alpha_2} \ldots \pa_n^{\alpha_n}$,
as usual.
The Laplacian will always have the ``analytic'' sign 
convention, i.e.\ $\Delta=\sum_{i} \pa_i^2$.

\begin{theorem}\label{theorem.intro.reg} 
Let $\maF\not\kern1pt\ni X$ be a finite semilattice of linear subspaces of 
$X \coloneqq  \RR^n$, $\{0\} \in \maF$. 
Let $X_{\maF} \ede  [\oX : \SS_\maF \cup \overline{\maF}]$ and let $d_Y(x) 
\coloneqq \dist(x, Y)$ and $\delta_{\maF}(x) 
\coloneqq \min\{\dist(x, \unionF), 1\} $ be the distance functions 
introduced in the previous subsection. For every $Y \in \maF \cup \{X\}$,
let $a_Y \in \CI(X_\maF)$ and
\begin{equation}\label{eq.V.theo.intro.reg}
    V(x) \ede\sum_{Y \in \maF} a_Y(x) d_Y(x)^{-1} + a_X(x) \,.
\end{equation}
Let us assume that $u \in L^2(X)$ satisfies 
$(\Delta + V)u = \lambda u$ in distribution sense on $X \smallsetminus \unionF$, for 
some $\lambda \in \CC$. Then, for all multi-indices $\alpha \in \NN^n$, we have
\begin{equation*}
     \delta_{\maF}^{|\alpha|} \pa^\alpha u \in L^2(X)\,.
\end{equation*}
\end{theorem}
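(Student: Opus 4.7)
The plan is to recast the eigenvalue equation on the Lie manifold $X_\maF$ built by the two iterated blow-ups and invoke elliptic regularity in that setting. First, I would recall from \cite{AMN1} and \cite{ACN} that $X_\maF$ carries a canonical Lie manifold structure with Lie algebroid $\maV$, such that near a hyperface coming from the blow-up of a collision plane $\oY \in \overline{\maF}$, sections of $\maV$ are locally of the form $\rho_{\oY}\, \pa_i$, while the weight $\delta_{\maF}$ agrees, up to a strictly positive smooth factor on $X_\maF$, with the product of boundary defining functions $\rho_{\oY}$ for the hyperfaces over $\overline{\maF}$. The interior of $X_\maF$ is naturally identified with $X \setminus \unionF$, so the Euclidean $L^2(X,dx)$ is comparable to the intrinsic $L^2$-space of $X_\maF$ up to a smooth positive density factor.

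The second step is to show that $P \ede \Delta + V - \lambda$ lifts, after multiplication by $\delta_{\maF}^2$, to an elliptic element of $\Diff^2_\maV(X_\maF)$. The Laplacian contributes $\delta_{\maF}^2 \Delta \in \Diff^2_\maV(X_\maF)$, because each $\delta_{\maF}\pa_i$ is a Lie vector field in $\maV$. For each singular term $a_Y\, d_Y^{-1}$ of $V$, the structure of the blow-up $[\oX : \overline{\maF}]$ ensures that $d_Y$ is comparable to a boundary defining function for the hyperface over $\oY$, and that the ratio $\delta_{\maF}/d_Y$ extends to a smooth, bounded, nonnegative function on $X_\maF$. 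Hence $\delta_{\maF}^2\, a_Y\, d_Y^{-1} = (\delta_{\maF}/d_Y)\cdot a_Y \cdot \delta_{\maF} \in \CI(X_\maF)$, so $\delta_{\maF}^2 V \in \CI(X_\maF)$. Ellipticity in the Lie sense follows because the Euclidean metric lifts to a non-degenerate metric on the dual of $\maV$, making the principal $\maV$-symbol of $\delta_{\maF}^2\Delta$ positive definite, and the potential is a lower-order smooth perturbation.

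With these ingredients in place, the equation $Pu = 0$ on $X \setminus \unionF$ becomes $(\delta_{\maF}^2 P)u = 0$ on the interior of $X_\maF$, driven by an elliptic operator in $\Diff^2_\maV(X_\maF)$ acting on $u \in L^2$. The regularity theorem for elliptic Lie differential operators on Lie manifolds, as established in \cite{ACN}, then yields $u \in H^m_\maV(X_\maF)$ for every $m \in \NN$. Unraveling the definition, this means $V_1 \cdots V_k u \in L^2$ for every $V_1, \dots, V_k \in \maV$ with $k \leq m$. Since the Lie vector fields on $X_\maF$ are generated near each collision plane by expressions comparable to $\delta_{\maF}\pa_i$, this translates back to the desired bound $\delta_{\maF}^{|\alpha|} \pa^\alpha u \in L^2(X)$ for every multi-index $\alpha \in \NN^n$.

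The main obstacle -- and the reason the result does not follow from \cite{AMN1} or \cite{ACN} alone -- is the compatibility between the two blow-ups: one must verify carefully that the Lie manifold structure on $X_\maF = [\XGV : \wha{\maF}]$ is well-behaved with respect to both the blow-up $\SS_\maF$ at infinity and the subsequent blow-up of the lifted collision planes $\wha{\maF}$. In particular, the claim that $\delta_{\maF}^2 V \in \CI(X_\maF)$ with the coefficients $a_Y$ only assumed smooth on the fully iterated blow-up, rather than on $\oX$ or $\XGV$ separately, is the technical heart of the argument, and this is precisely where the iterated blow-up machinery and the careful tracking of Lie structures developed in the present paper come into play.
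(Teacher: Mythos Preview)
Your overall strategy matches the paper's: multiply the equation by a suitable square of a distance-type weight, recognize the result as an elliptic operator in $\Diff^2_{\maW_\maF}(X_\maF)$, apply elliptic regularity on the Lie manifold (from \cite{sobolev}, not \cite{ACN}), and unwind. Your last paragraph also correctly locates the technical core.

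There are, however, two imprecisions that the paper handles carefully and that you gloss over. First, the function $\delta_\maF = \min\{\dist(x,\unionF),1\}$ is \emph{not} smooth on $X_\maF$ (there is a kink where the distance equals~$1$, and even $\dist(x,\unionF)$ itself is not smooth off $\unionF$), so your claims that $\delta_\maF^2\Delta \in \Diff^2_{\maV}(X_\maF)$ and $\delta_\maF^2 V \in \CI(X_\maF)$ are false as stated. The paper works throughout with the \emph{smoothed} distance function $\rho_{\wha\maF}$ to $\wha\maF$ in $\XGV$ (Definition~\ref{def.rho_maS}), proves $\rho_{\wha\maF}\,d_Y^{-1}\in\CI(X_\maF)$ (Proposition~\ref{prop-smoothing-pot}) and hence $\rho_{\wha\maF}^2 V\in\CI(X_\maF)$, runs the regularity argument with $\rho_{\wha\maF}$, and only at the very end invokes the \emph{Lipschitz} (not smooth) equivalence of $\rho_{\wha\maF}$ and $\delta_\maF$ (Appendix~\ref{app.a}) to pass to the stated conclusion. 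Second, the Euclidean $L^2(X)$ is not the intrinsic $L^2$ of the Lie manifold: the compatible metric on $X_\maF$ is $\rho_{\wha\maF}^{-2}\geucl$, so $L^2(X)=\rho_{\wha\maF}^{-n/2}L^2(X_\maF,\maW_\maF)$, and the regularity theorem must be applied in this weighted space (this is the role of $\chi$ in Theorem~\ref{theorem.regularity}(iii)).
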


Define $\rho(x) \coloneqq  \dist_{\overline{g}}(x, \unionF)$, where $\overline g$ is
a (true) metric on $\XGV$ (that is, a metric on the compact
manifold with corners $\XGV$ that is smooth up to the boundary).
Then we can use $\rho$ instead of $\delta_\maF$, see Appendix~\ref{app.a}.
That is, for $u$ as in the theorem above and for all multi-indices $\alpha \in \NN^n$, we have
\begin{equation*}
     \rho^{|\alpha|} \pa^\alpha u \in L^2(X)\,.
\end{equation*}

Theorem \ref{theorem.intro.reg} is, as we have already mentioned, a particular 
case of Theorem~\ref{theorem.application} (which, in turn, follows from Theorem~\ref{theorem.regularity}). In particular, that theorem covers also the 
case of inverse square potentials, which have been studied recently in 
\cite{DerezinskiFaupin, DerezinskiRichard, Felli1, HLNU1, HLNU2, LiNi09} and 
in many other papers. We also consider higher order uniformly strongly elliptic 
operators. This result strengthens a similar regularity result in \cite{ACN}; compared to this previous result we will obtain better decay rates at infinity. 
Our results generalize immediately to systems. Regularity results for the 
eigenfunctions of Schr\"odinger operators have been obtained in many papers. 
See, for instance, \cite{ACN, Jecko2, Felli2, Felli3, Flad2, Flad3,Flad1, 
Fournais3, Fournais2, Fournais1, grieser:b, Siedentop, Jecko1, VasyAsympt, 
VasyResolvent, Yserentant1, Yserentant3} and the references therein. Many 
methods in this article may be generalized to non-linear equations, e.g.\ 
to semi-linear equations of the form
\begin{equation*}
  \Delta u + V |u|^s u=\lambda u
\end{equation*}
with $V$ as in \eqref{eq.V.theo.intro.reg}, $\lambda\in \CC$ and  
$0<s\leq 2/(n-2)$, see Remark~\ref{rem.nonlin} for details.

\subsection{Contents of the paper}
The paper is essentially self-contained and consists of two parts.
The first part contains mostly background material, but presented in
a novel way. It consists of Sections~\ref{sec2} and~\ref{sec2bis}.
In Section~\ref{sec2}, we review
manifolds with corners, their submanifolds, clean intersections of submanifolds, 
and some other basic concepts needed in the paper. We also recall in
this section some results from \cite{AMN1} and from some earlier papers, 
including \cite{ACN, Kottke-Lin, MelroseBook}.
In Section \ref{sec2bis} we recall and study the blow-ups and their iteration 
with respect to one or more suitable subsets.
In particular, we extend the definition of an \emph{admissible order} to
a $k$-tuple of closed subsets of $M$ and explain that the iterated blow-up 
with respect to a clean semilattice with an admissible order 
is defined and independent of the chosen admissible order \cite{ACN, AMN1}. 
We also explain that the action of a Lie group on $(M, \maS)$ extends to an action 
on the iterated blow-up $[M: \maS]$. The second part of the paper
begins with Section \ref{sec3}, where we study metric aspects 
of the iterated blow-ups. We start by introducing the concept of a ``smoothed distance 
function'' to a \psubmanifold{} $P \subset M$, which is a function that behaves
like the distance to $P$ close to $P$, but is smooth outside~$P$. We study then
the smoothed distance function to a blow-up suitable $k$-tuple, in general, and to
a clean semilattice, in particular. Then we recall how the metrics, the Sobolev 
spaces, and the differential operators change if one conformally changes the metric 
using a smoothed distance function. We apply these results to blow-ups (including
the iterated ones). We distinguish here the case of a blow-up along a submanifold 
contained in the boundary and the case of a manifold not contained in the boundary. 
The main technical result of this section is the behavior of the smoothed
distance functions when performing iterated blow-ups, Proposition \ref{prop.smoothness}.
We also recall here the regularity result for the natural elliptic operators on Lie manifolds 
\cite{sobolev}, which will then provide the regularity result of this paper. (The main
work of this paper is to position ourselves to be able to use the regularity
theorem for Lie manifolds.) In Section~\ref{sec4}, we introduce the relevant
semilattices and Lie manifolds needed to deal with the $N$-body problem and 
we particularize the constructions and results of Section~\ref{sec3} to this setting.
Section~\ref{sec5} explains how to obtain our regularity result, Theorem 
\ref{theorem.intro.reg} and some of its generalizations
from the results of the other sections. Finally, the paper contains
\ifdraftmode\red{\numberappendices}{}\else\numberappendices\fi{} 
%\ifarxivversion several \else three \fi \bernd{Check numbers of appendices before submission to arxive and journal!}
appendices. In Appendix~\ref{app.a}, we prove the
equivalence of the functions $\rho$ and $\delta_{\maF}$ used in 
Theorem~\ref{theorem.intro.reg} and right after. (In fact, they are both equivalent 
to $\rho_\maF$, the smoothed distance function to $\maF$, which is the function that 
is actually used in the proofs, but is more difficult to define.)
%The other \red{\numberappendicesdecr}{}
%% Automated count for number of appendices minus one
%%\bernd{Check number at the end!}
%appendices contain \sout{each} technical details used in the proof of a result
%from the main body of the paper.
%
\ifarxivversion
Appendices B, C and D contain  technical details used in the article. Appendix E summarizes some notation.
\else
Appendices B and C contain technical details used in the article. The preprint  version (arXiv 2012.13902) contains two additional appendices.
\fi
%from the main body of the paper.

\textbf{Acknowledgment:} 
V.N. thanks Chris Kottke for some useful references.
The authors thank Daniel Grieser for useful discussions about iterated 
blow-ups and lifts of submanifolds.

\section{Preliminaries}\label{sec2}

We now recall a few concepts and results needed to understand
the main results of this paper. For the concepts
and results not recalled here, we refer to \cite{ACN} or \cite{AMN1},
on which this paper is heavily based.

\subsection{A few basic notations and definitions}\label{subsec.basics}
We need to complete the notation introduced in the Introduction as follows.
We let
\begin{equation}\label{eq.notation.corner}
    \RR_k^n \ede [0, \infty)^k \times \RR^{n-k}
    \mbox{ and } \ \SS_k^{n-1} \ede \SS^{n-1} \cap \RR_k^n\,,
\end{equation}
where $\mathbb{S}^{n-1}$ is the unit sphere of $\mathbb{R}^n$ for the Euclidean norm. 
The space $\SS_k^{n-1}$ will be called a  \emph{(generalized) orthant of the sphere}, 
for instance, $\SS_1^{n-1}$ is a half-sphere. It will be used
in order to help us understand the smooth structure on the spherical 
compactification $\oX$. More precisely, let us assume that $X = \RR^n$ and let us 
consider the bijective map
$\Theta_n : \oX \to \mathbb{S}^n_1$
\begin{equation}\label{eq.def.theta}
  \begin{gathered}  
\begin{cases}
\ \Theta_n(x) \ede \frac{1}{\sqrt{1+\|x\|^2}} (1,x) \in \mathbb{S}^n_1 
& \mbox{ if } x \in X \,, \\
\ \Theta_n(\RR_+v) \ede \frac{1}{\|v\|}(0,v) \in \mathbb{S}^n_1  
& \mbox{ if } \RR_+v \in \mathbb{S}_X\,.
\end{cases}
\end{gathered}
\end{equation}

and its inverse $\Theta^{-1}_n :\mathbb{S}^n_1 \to \oX$
\begin{equation}\label{eq.def.theta.inv}
\Theta^{-1}_n (y_0,y_1, \ldots, y_n) \longmapsto
     \begin{cases}\frac{1}{y_0}(y_1,\ldots ,y_n) \in \mathbb{R}^n & \text{if } y_0\neq 0\\
       \RR_{+} (y_1,\ldots,y_n) \in \SS_X & \text{if } y_0=0 
     \end{cases} \,.
\end{equation}

We define the smooth structure on $\oX$ by the requirement that the map 
$\Theta_n$ be a diffeormphism as in \cite{Kottke-Lin, MelroseBook, VasyReg}.
For general $X$, we define the smooth structure on $\oX$ using a linear isomorphism 
$X \simeq \RR^n$, for suitable $n$.

\begin{remark}\label{rem.emb}
Let us assume that $Y = \RR^p \subset \RR^n = X$. Then we have the following commutative
diagram of smooth embeddings
\begin{equation}\label{eq.emb}
\begin{matrix}
Y & \hookrightarrow &  \overline{Y} &  \stackrel{\Theta_p}{\longrightarrow}
& \mathbb{S}^p_1 \\
\hookdownarrow& & \hookdownarrow& &\hookdownarrow \\
X &  \hookrightarrow & \oX & \stackrel{\Theta_n}{\longrightarrow} & \mathbb{S}^n_1\,
\end{matrix},
\end{equation}
which is a basic functoriality property of the spherical compactification. In particular, this 
shows that the closure of $Y$ in $\oX$ can be identified as a differential manifold
with $\oY$, the spherical compactification of $Y$, so there is no danger of confusion.
\end{remark}

Let $|A|$ denote the number of elements of a set $A$.
We continue with several basic definitions.

\begin{definition}\label{def.boundary.depth}
Let $I \subset \{1, \ldots, n\}$ and
\begin{equation}\label{eq.p-manifold}
  L_{I,k}^n \ede \{\, x=(x_1,\ldots , x_n) \in \RR^n_k 
  \ede [0, \infty)^k \times \RR^{n-k}
  \mid x_i=0   \ \mbox{ if }\ i \in I\, \} \, .
\end{equation}
Then $b_{I,k}^n \coloneqq |I\cap \{1,\ldots,k\} |$ will be called the
\emph{boundary depth} of $L_{I,k}^n$ in $\RR_k^n$ and
$b_x \coloneqq  | \{i\in \{1,\ldots,k\} \mid x_i=0\}| $
will be called the \emph{boundary depth} 
of $x = (x_1,\ldots,x_n)$ in $\RR_k^n$.
Clearly, $c\coloneqq |I|$ is the \emph{codimension} of $L_I$ in $\RR_k^n$
and $d \coloneqq  n-c$ is its \emph{dimension}.
\end{definition}

Obviously $b_{I,k}^n\coloneqq \min \{ b_{x} \mid x \in L_{I,k}^n\}$.
The role of the sets $L_{I,k}^n$ is to serve as models for \psubmanifolds\
\cite[Definition~1.7.4]{MelroseBook}. A \psubmanifold\ is a suitable submanifold 
of a manifold with corners, a concept recalled next.  
The letter ``p'' comes from the fact that for such a \psubmanifold\ and any of
its points, a local chart can be found around that point such that
 the submanifold is a factor of a \emph{product}.

\subsection{Manifolds with corners and their submanifolds}
Let us now summarize a few basic definitions related to manifolds with corners 
\cite{aln1, JoyceCorners, Kottke-Lin, Kottke-gblow, KottkeMelrose2, nistorDesing}.
Recall that a \emph{manifold with corners} $M$ of dimension $n$ is a topological 
space locally modeled on $\RR^n_k \coloneqq  [0, \infty)^{k} \times \RR^{n-k}$ with 
smooth transition functions.  The spaces $\RR^n_k$ and $\SS^n_k$ introduced in
\eqref{eq.notation.corner} provide simple examples of manifolds with corners. 
Given a point~$x$ in a manifold with corners~$M$, the \emph{(boundary) depth} 
of $x$ in $M$ is defined locally using Definition~\ref{def.boundary.depth}.
More precisely, the boundary depth of~$x$ in~$M$ is the number of non-negative coordinate 
functions vanishing at~$p$ in any local coordinate chart at $p$. It is the least 
$k$ such that, for all $x \in M$, there exists a chart $\phi : U \to \RR_k^n$ 
defined on an open neighborhood of $x$ in~$M$. For further reference, let us
formalize the following concept in a definition.

\begin{definition}\label{def.bdry.depth}
The \emph{boundary depth} of $P \subset M$ 
is the minimum over all boundary depths of points $x\in P$.
\end{definition}

Let $(M)_k$ be the set of points of~$M$ 
of boundary depth $k$. Its connected components are called the \emph{open} boundary faces of 
codimension (or boundary depth) $k$ of~$M$. A \emph{boundary face} of boundary depth $k$
is the closure of an open boundary face of  boundary depth $k$. A boundary face $H$ of codimension
one of~$M$ will be called a \emph{boundary hyperface} of~$M$. The union of
the boundary hyperfaces~$H$ of~$M$ is denoted $\pa M$; it is 
the \emph{boundary} of $M$. It is the set of points of $M$ of
boundary depth $\ge 1$. In particular, a subset $A \subset M$ has boundary
depth 0 if, and only if, it is not contained in $\pa M$, the boundary of $M$.

The smooth functions on a manifold with corners $M$ are defined as on
their counterparts without corners. This is conveniently done by embedding $M$
into a smooth manifold without corners $\widetilde M$, called an \emph{enlargement}
of $M$. This enlargement can also be used to define the tangent spaces of $M$.

\begin{definition}\label{def.b.def.f}
Let $H$ be a hyperface of $M$ or a union of hyperfaces of $M$. 
A \emph{boundary defining function} of $H$ (in $M$) is a function $0 \le x_H \in
\CI(M)$ such that $H = x_H^{-1}(0)$ and $dx_H \neq 0$ on $H$.
\end{definition}

\begin{remark}
The example of the teardrop domain, see Figure~\ref{fig.teardrop}, shows that not 
all hypersurfaces have a boundary defining function. However, each (connected)
boundary face $F$ of codimension $k$ can \emph{locally} (\ie, in a sufficiently
small neighborhood~$U$ of any given point~$x$) be represented as 
\begin{equation*}
    F \cap U \seq \{x_1 = x_2 = \ldots = x_k = 0\}\,,
\end{equation*}  
where $x_j$ are boundary defining functions  in~$U$ of
the hyperfaces of $U$ containing $F \cap U$. Then $k$ is the boundary depth of $F$.
\end{remark}

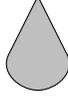
\begin{figure}\label{fig.teardrop}
\begin{center}
  \begin{tikzpicture}[scale=.25]
    \draw[fill=gray!50] (1.5,3) -- (0,0) .. controls (-.61,-1)  and (.5,-2) .. (1.5,-2) --  (1.5,-2)  .. controls (2.5,-2) and (3.61,-1) .. (3,0) -- cycle;
    %\node[above] (ABC) at (1.5,3.2) {$\scriptstyle p$}; 
    \draw[fill=black] (1.5,3) circle (.08);
  \end{tikzpicture}
\end{center}
\caption{A teardrop domain as a subset in $\RR^2$.}
\end{figure}

In order to define the blow-up, 
we need the concept of an \emph{inward pointing normal bundle} of a \psubmanifold\ 
$P$ of $M$. We first recall the concept of an inward pointing tangent space to a 
manifold with corners $M$, a concept used also in \cite{sobolev, aln1}.

\begin{definition}\label{def.T+M}
Let $M$ be a manifold with corners.
We let the \emph{inward pointing tangent space} $T_x^+M$ to be defined
as the set of derivatives
\begin{equation*} 
  T_x^+M \ede \{ \gamma'(0) \mid \gamma:[0,\epsilon)\to M \text{ smooth},
\quad \gamma(0)=x \}\,.
\end{equation*}
Its elements are called \emph{inward-pointing tangent vectors} to $T_xM$.
\end{definition}

We stress that, according to this definition, 
a tangent vector to $M$ that is tangent to the boundary will
automatically be an inward pointing tangent vector.

\subsection{Submanifolds of manifolds with corners}

The following concept introduced in \cite{MelroseBook} will play a crucial role
in what follows. (Recall that $|A|$ denotes the number of elements of a 
set $A$.)

\begin{definition}
\label{def.submanifold}
A subset $P$ of a manifold with corners $M$ is a
\emph{\psubmanifold} if, for every $x \in P$, there exists a chart
$\phi : U \to \RR_k^n$, with $U$ an open neighborhood of $x$ in $M$,
and $I \subset \{1, 2, \ldots, n\}$ such that
\begin{equation*}
    \phi(P \cap U) \seq L_{I,k}^n \cap \phi(U),
\end{equation*}
with $L_{I,k}^n$ as defined in Equation \eqref{eq.p-manifold}.~The number
$n-|I|$ (respectively, $|I|$, respectively, $|I\cap
\{1,\ldots,k\}|$) will be called the \emph{dimension}
(respectively, the \emph{codimension} of $P$ at $x$, respectively, the
\emph{boundary depth} of $P$ at $x$). We allow \psubmanifolds\ $Y$ of
non-constant dimension. We define $\dim (Y)$ as the maximum of the
dimensions of the connected components of $Y$ and $\dim (\emptyset) = -\infty$. 
\end{definition}

In particular, $\SS^{n}_k$ is a closed \psubmanifold\ of $ S^{n+n'}_{k+k'}$ 
of codimension $n'$ and boundary depth $k'$.

\begin{definition}\label{def.NPM}
Let $P \subset M$ be a \psubmanifold{} of the manifold with
corners~$M$.
\begin{enumerate}[(i)]
\item The quotient bundle $N^MP \coloneqq  TM\vert_P/TP$ is called the \emph{normal
  bundle of $P$ in $M$}. 
\item The image $N^M_+P$ of $T^+M\vert_{P}$ in
$N^M P$ is called the \emph{inward pointing normal fiber bundle of $P$ in
  $M$.} One can show that, in a neighborhood of $x\in P$, it is a fiber bundle with 
  fiber $\RR^{n'}_{k'}$ where $n'$ is the codimension of $P$ in $M$ at $x$ and where 
  $k'$ is the boundary depth of $P$ in $M$.
\item The set $\SS(N^M_+P)$ of rays (emanating from $0$) in $N^M_+P$ is called the 
\emph{inward pointing spherical normal bundle of $P$ in $M$.} It is a fiber bundle 
with fibers  $\SS^{n'-1}_{k'}$, where $n'$ and $k'$ are as above.
\end{enumerate}    
\end{definition}

\begin{remark}A (positive definite) scalar product on $TM$
induces a scalar product on $N^MP$. The choice of a metric on $M$ will 
thus define a natural diffeomorphism from $\SS(N^M_+P)$ to the set of unit vectors in $N^M_+P$.
\end{remark}

Besides \psubmanifolds, we shall also need \emph{\wsbmanifolds,} which are really
just plain submanifolds (without any other qualification or condition). We now recall
(and slightly reformulate) the definition of a \wsbmanifold\ from \cite{AMN1}; 
the reformulation provides a definition equivalent to  
\citeschrSpace{Definition}{def.weak-submanifold}, but avoids introducing the concept of 
``submanifolds in Melrose's sense,''  \citeschrSpace{Definition}{def.submanifold-gen} and 
its simplified version \citeschrSpace{Definition}{def.weak-s.0}

\begin{definition}
\label{def.weak-submanifold}
A subset $S$ of a manifold with corners $M$ is a \emph{\wsbmanifold{} of $M$} if, for 
every $p \in S$,
there are
\begin{itemize}
\item natural numbers $k=k_p, m=m_p\in \{0,\ldots,n\}$ and $\ell=\ell_p\in \{0,\ldots,m\}$,
\item a chart $\phi=\phi_p : U \xrightarrow{\sim} V \subset \RR^n_k$ on $M$ with $V$ open in $\RR^n_k$, 
\item a diffeomorphism $\psi=\psi_p: \widetilde V \xrightarrow{\sim}  W$ with $\widetilde V$ and 
$W$ open in $\RR^n$
\end{itemize}
such that 
\begin{enumerate}
\item\label{def.weak-submanifold.i} $p \in U$,
\item $V=\widetilde V\cap \RR^n_k$, 
\item\label{def.weak-submanifold.ii} $\psi\bigl(\phi(S \cap U)\bigr)= W\cap \RR^m_\ell$.
\end{enumerate}
\end{definition}

If $S$ is a weak submanifold of a manifold with corners $M$, then 
$\{\psi_p\circ \phi_p\mid p\in S\}$ provides an atlas for a manifold with corners structure 
on $S$ such that the inclusion map $S\hookrightarrow M$ is an injective immersion and a homeomorphism 
onto its image. In conclusion, a weak submanifold of $M$ is a
manifold with corners on its own. Conversely, if $S$ is an abstract manifold with 
corners, together with an injective immersion $\iota:S\to M$ that defines a homeomorphism 
from $S$ to $\iota(S)$, then \citeschrSpace{Proposition}{prop.smfd.crit} states that $S$ 
is weak submanifold of $M$. Moreover, any \psbmanifold{} of $M$ is a weak
submanifold of $M$.

The number $m_p$ is the dimension of $S$ at $p$ and thus locally constant.
(Again we do not require the function $\dim_p(S)$ to be constant on $S$.)
The number $\ell_p$ is the (boundary) depth of $p$ in $S$, and might by smaller, 
larger or equal to $k_p$, the  boundary depth of $p$ in $M$.

Note that Definition~\ref{def.weak-submanifold} is weaker than the notion of a submanifold in Melrose's 
sense \citeschrSpace{Definition}{def.submanifold-gen}, which explain the word ``weak'' in the above 
definition.

\subsection{Clean intersections}
In order to study the iterate the blow-up construction in the next section, we will need 
\emph{clean intersections,} which we recall next. For simplicity, we discuss only 
the case of \psubmanifolds.

\begin{definition}\label{def.clean.in}
Let $P$ and $Q$  be two \psubmanifold s of a manifold with corners $M$
such that $P \cap Q$ is also a \psubmanifold\ of $M$. We say that 
$P$ and $Q$ \emph{intersect 
cleanly} or that \emph{they have a clean intersection} 
if, for every $x\in P\cap Q$, we have $T_x(P\cap Q)=T_x P\cap T_x Q	$.
\end{definition}

For example, if $P$ is a \psubmanifold\ of $M$ and $F$ is a boundary face of $M$, then 
$F$ and $P$  have a clean intersection. This is not, however, the case, in general,
if $P$ is just a \emph{weak submanifold} of $M$. This explains, in particular,
the need to consider \psubmanifolds. We next recall how to extend the 
definition of clean intersection to semilattices of \psubmanifolds.

\begin{definition}\label{def.clean.semilat}
Let $\maS$ be a finite semilattice of \psubmanifolds\ of $M$. 
We call $\maS$ a \emph{clean semilattice of\, \psubmanifolds\ (of $M$)} if
any $P,Q \in \maS$ intersect cleanly.
\end{definition}

\begin{remark}\label{rem.clean.intersection}
If $\maS$ is a clean semilattice of \psubmanifolds\, then, 
for all $P_1,\ldots,P_k\in \maS$ and all $x \in \bigcap_{j=1}^k P_{j}$, 
we obtain that 
\begin{equation} \label{eq.clean.intersection}
   T_x \Big(\bigcap_{j=1}^k P_{j}\Big) \seq \bigcap_{j=1}^k
   T_x P_{j} \,.
\end{equation}
In particular, our definition of clean semilattices coincides with 
Definition~2.7 in \cite{ACN}. This property does not hold anymore, 
if $\maS$ is not a semilattice, as seen in the next example.
\end{remark}

\begin{example}
Assume that $\maS$ is a set of \psubmanifolds\ with $k$ elements, such 
that each pair $\{P_i,P_j\}\subset \maS$ intersections cleanly, then we 
cannot conclude, in general, that we have the property 
expressed in Equation \eqref{eq.clean.intersection} for $k\geq 3$.
Indeed, let us consider the family consisting of the following 
three surfaces $P_1$, $P_2$ and $P_3$ in $\RR^3$ (i.e. $2$-dimensional submanifolds of $\RR^3$):
\begin{equation*}
\begin{gathered}
    P_1 \ede \left\{(x,y,z)^T\in \RR^3 \mid y = 0\right\}, 
    \qquad P_2\ede\left\{(x,y,z)^T\in \RR^3
    \mid z=0\right\}\,,\\
  P_3 \ede \left\{(x,y,z)^T\in \RR^3\mid y+z=x^2\right\}\,.
\end{gathered}
\end{equation*}

Each pair intersects cleanly in a $1$-dimensional submanifold. However $P_1\cap P_2\cap P_3=\{0\}$, but $T_0P_1\cap T_0P_2 \cap T_0P_3= \RR (1,0,0)^T$. Thus the triple $(P_1,P_2,P_3)$ does not intersect cleanly.
\end{example}

The following Lemma will provide the needed examples of pairs of cleanly intersecting 
\psubmanifolds.

\begin{lemma}\label{lem.clean}
Let $Y,Z$ be two linear subspaces of $X$. Then all subsets of 
$\{\overline Y,\overline Z, \SS_Y,\SS_Z\}$ intersect cleanly. 
\end{lemma}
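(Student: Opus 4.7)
The statement asserts clean intersection for every subset of $\{\overline Y,\overline Z,\SS_Y,\SS_Z\}$. Since $\{\overline Y,\overline Z,\SS_Y,\SS_Z\}$ is closed under intersection (all multi-intersections reduce to $\overline{Y\cap Z}$ or $\SS_{Y\cap Z}$, see below), the family is a semilattice, so by Remark~\ref{rem.clean.intersection} it suffices to check that every \emph{pair} intersects cleanly and that each intersection is a \psubmanifold{} of $\oX$.

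\emph{Step 1: set-theoretic intersections.} I would first identify
\begin{equation*}
  \overline Y\cap\overline Z=\overline{Y\cap Z},\qquad
  \SS_Y\cap\SS_Z=\SS_{Y\cap Z},\qquad
  \overline Y\cap\SS_Z=\SS_{Y\cap Z},
\end{equation*}
together with the trivial identities $\overline Y\cap\SS_Y=\SS_Y$, etc. The containments $\supseteq$ are clear; for $\subseteq$, a point of $\overline Y\cap\overline Z$ lying in $X$ is in $Y\cap Z$, while a point on $\SS_X$ is a ray contained in both $Y$ and $Z$, hence in $Y\cap Z$. The argument for the mixed case is identical.

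\emph{Step 2: \psubmanifold{} property.} Using Remark~\ref{rem.emb} and the diffeomorphism $\Theta_n\colon\oX\to\SS^n_1$, I would identify $\Theta_n(\overline Y)=\SS^n_1\cap(\RR\times Y)$ and $\Theta_n(\SS_Y)=\SS^n_1\cap(\{0\}\times Y)$, and similarly for $Z$ and for $Y\cap Z$. Since each of these is the transverse intersection of $\SS^n_1\subset\RR^{n+1}$ with a linear subspace of $\RR^{n+1}$, they are \psubmanifolds{} of $\SS^n_1$, and pulling back by $\Theta_n^{-1}$ gives the \psubmanifold{} property in $\oX$.

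\emph{Step 3: tangent space equality.} Fix a pair $(P,Q)$ from the list and a point $p\in P\cap Q$. At an interior point $p\in X$, the tangent spaces $T_p\overline Y$ and $T_p\overline Z$ are just $Y$ and $Z$, and cleanness is immediate. At a boundary point $p\in\SS_X$, write $\Theta_n(p)=(0,v)$ with $v\in Y\cap Z$ of unit norm. Then $T_{(0,v)}\SS^n_1=v^{\perp}\subset\RR\times X$, and
\begin{align*}
 T_{(0,v)}\Theta_n(\overline Y)&=(\RR\times Y)\cap v^\perp, &
 T_{(0,v)}\Theta_n(\SS_Y)&=(\{0\}\times Y)\cap v^\perp,
\end{align*}
and analogously for $Z$ and for $Y\cap Z$. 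Intersecting two such subspaces and using $(\RR\times Y)\cap(\RR\times Z)=\RR\times(Y\cap Z)$, together with the corresponding identity for the equatorial version, yields in every case the tangent space of $\Theta_n$ applied to the set identified in Step~1. Pulling back by $\Theta_n^{-1}$ finishes the clean intersection condition.

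\emph{Main obstacle.} There is no serious obstacle: once the problem is transported to $\SS^n_1\subset\RR^{n+1}$ via $\Theta_n$, everything reduces to the elementary linear identity $(A\cap v^\perp)\cap(B\cap v^\perp)=(A\cap B)\cap v^\perp$ for linear subspaces $A,B\subset\RR\times X$. The only point requiring mild care is the mixed case $\overline Y\cap\SS_Z$, where one intersects a subspace of the form $\RR\times Y$ with one of the form $\{0\}\times Z$; the intersection is $\{0\}\times(Y\cap Z)$, matching $\Theta_n(\SS_{Y\cap Z})$ as required.
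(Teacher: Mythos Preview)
Your approach is correct and essentially coincides with the paper's: both transport the problem to $\SS^n_1$ via $\Theta_n$, identify $\Theta_n(\overline Y)$ and $\Theta_n(\SS_Y)$ as slices by the linear subspaces $\RR\times Y$ and $\{0\}\times Y$, and read off the clean intersection from the elementary identity $(A\cap v^\perp)\cap(B\cap v^\perp)=(A\cap B)\cap v^\perp$. The paper carries this out by choosing an adapted basis $e_1,\dots,e_n$ of $X$ (with $e_1,\dots,e_m$ spanning $Y\cap Z$, etc.) and listing the tangent spaces explicitly at the ray through $e_1$; your coordinate-free formulation is the same computation.

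One point of phrasing to correct: the four-element family $\{\overline Y,\overline Z,\SS_Y,\SS_Z\}$ is \emph{not} a semilattice in general, since $\overline Y\cap\overline Z=\overline{Y\cap Z}$ and $\SS_Y\cap\SS_Z=\SS_{Y\cap Z}$ need not belong to it. What you actually want is to enlarge to $\{\overline Y,\overline Z,\SS_Y,\SS_Z,\overline{Y\cap Z},\SS_{Y\cap Z}\}$, which \emph{is} a semilattice, and your Step~3 already treats all pairs from this enlarged family uniformly. Alternatively, observe that your linear identity in Step~3 works just as well for three or four subspaces at once, so you do not even need to invoke Remark~\ref{rem.clean.intersection}: the multi-intersection tangent equality follows directly. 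The paper handles the triples and the four-element subset by a closely related device, namely by reapplying the pair case with one of the subspaces replaced by $Y\cap Z$ (e.g.\ checking that $\overline Y$ and $\overline Z\cap\SS_Y=\SS_{Y\cap Z}$ intersect cleanly).
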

\ifarxivversion
The proof is straightforward but involves many cases, so we relegate
it to Appendix~\ref{app.clean.intersections}, 
see Corollary~\ref{cor.ess.lem.clean}.
\else
The proof is a direct verification. We include it as an appendix in the 
arxiv version of this article.
\fi

\section{Blow-ups} \label{sec2bis}
In this section we gather the needed results on blow-ups, 
following the approach of~\cite{AMN1}. Most of the results in this 
section were discussed in a similar form in the literature, see 
\cite{ACN, AMN1,grieser:b,grieser:overviewAMS, Kottke-Lin, MelroseBook} for more details. 
In particular, the basic construction of the blow-up along a \psubmanifold{} coincides 
with the one in these references. However, the lifting (or pull-back) of submanifolds 
differs in certain cases from the one used in, for instance,  \cite{grieser.talebi.vertman:p20, 
Kottke-Lin, MelroseBook}. As a consequence, our notion of iterated blow-up exhibits some subtle 
differences to the iterated blow-up discussed in the aforementioned papers. Our slightly different 
approach to the iterated blow-up has better locality and functorial properties, in particular it is 
compatible with restriction to open subsets, and thus avoids discussing additional special cases. 
However, our modified approach requires additional attention when citing existing literature: for 
example \cite[Lemma 7.2 (b)]{grieser.talebi.vertman:p20} does not hold using our definitions.
As a consequence, the current section will be written in a self-contained way,
both in order to provide reliable foundations and for better readability. The subtle difference 
in the approaches, however will finally have no effect in our applications: in these applications, 
our iterated blow-up spaces coincide with the iterated blow-up spaces in the work of Melrose and 
in the related work cited above.

\subsection{The blow-up along a closed \psubmanifold}
We now recall the definition of the blow-up $[M: P]$ of a manifold with
corners $M$ along a closed \psubmanifold\ $P$ of $M$. See \cite{AMN1,Kottke-Lin} 
for further details and references. We begin by specifying the underlying set of 
the blow-up $[M: P]$. Its topology and smooth structure will be defined shortly after that. 
If $A$ and $B$ are disjoint, we sometimes denote $A \sqcup B \ede A \cup B$ their union.

\begin{definition}\label{def.blow-up}[{see \cite[Definition~3.1]{AMN1}}]
Let $M$ be a manifold with corners and $P$ be a \emph{closed}
\psubmanifold\ of $M$. We let $\SS(N^M_+P)$ denote the inward pointing
spherical normal bundle of $P$ in $M$ (Definition \ref{def.NPM}). Then, as
a set, the \emph{blow-up of $M$ along $P$}
is the disjoint union
\begin{equation*}
  [M:P] \ede (M\setminus P) \,\sqcup \,\SS(N^M_+P)\,.
\end{equation*}
The blow-down map $\beta = \beta_{M,P} : [M:P] \to M$ is defined as the
identity map on $M\setminus P$ and as the fiber bundle projection
$\SS(N^M_+P) \to P$ on the complement.
\end{definition}

If $P \subset \pa M$ in the above definition, we shall say that $[M: P]$
is a \emph{boundary blow-up}. If no component of $P$ is contained in the
boundary of $M$, then we shall say that $[M: P]$ is an \emph{interior
blow-up}. See \cite{AMN1, Kottke-Lin, MelroseBook}, for instance, for
the definition of the topology and smooth structure on the blow-up $[M: P]$.
Let us say, nevertheless, that the smooth structure on $[M: P]$ is defined using
Euclidean model spaces and is such
that it induces the given smooth structures on $M\setminus P$ and on $\SS(N^M_+P)$.
We also remark that the topology on $[M: P]$ is such that $M$ has the quotient topology with 
respect to $\beta_{M, P}$. The following extreme cases of blow-ups deserve a special 
treatment.

\begin{remark}\label{rem.extreme}
If $P$ is a union of connected components of $M$, 
then $[M: P] = M \smallsetminus P$. In particular, we have $[M:\emptyset]= M$ 
and $[M:M]=\emptyset$.
\end{remark}

%\red{V2All: the stuff on clean intersections was moved earlier, in the
%previous section.}

We shall need the following Proposition from  \cite{AMN1}. In that proposition a \emph{lift} 
denotes a smooth map $j^{\beta}$ that yields a commutative diagram
\begin{equation*}
\xymatrix{ [Q:P\cap Q] \ar[r]^{j^\beta} \ar[d]_{\beta_{Q,P\cap Q} }&
  [M:P] \ar[d]^{\beta_{M,P}} \\
  Q \ar[r]^\phi & M. }
\end{equation*}

\begin{proposition}[ \citeschrSpace{Proposition}{prop.beta.m1}]\label{prop.beta.m1}
Let $P$ and $Q$ be closed \psbmanifolds\ of $M$ intersecting
cleanly. Then the inclusion $j : Q \to M$ lifts uniquely to a smooth map
\begin{equation*}
  \underbrace{(Q \smallsetminus (P\cap Q))
  \,\sqcup\, \SS(N^Q_+ (P \cap Q) )}_{\displaystyle [Q : P \cap Q] \ede} \xrightarrow{\;\; j^{\beta}\;\; } \underbrace{(M\smallsetminus P)\,\sqcup\,
  \SS(N^M_+P)}_{\displaystyle [ M : P ] \ede}.
\end{equation*}
This map is an injective immersion, a homeomorphism onto its image, 
and the image of $j^\beta$ is a \psbmanifold. Moreover
\begin{equation*}
   \overline{\beta_{M,P}^{-1}(Q \smallsetminus P)} \seq j^\beta([Q: P \cap Q]).
\end{equation*}
\end{proposition}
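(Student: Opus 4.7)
The plan is to construct $j^\beta$ explicitly, piece by piece, and then verify the claimed properties locally using coordinates adapted to the clean intersection.

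First I would define $j^\beta$ set-theoretically. On $Q\setminus (P\cap Q)=Q\setminus P$, the lift must equal the restriction of the inclusion $j$, since $\beta_{M,P}$ is the identity there; this already gives uniqueness away from $P\cap Q$. On the other piece $\SS(N^Q_+(P\cap Q))$, I would use the clean intersection hypothesis: since $T_x(P\cap Q)=T_xP\cap T_xQ$ for $x\in P\cap Q$, the inclusion $T_xQ\hookrightarrow T_xM$ descends to an injective linear map
\begin{equation*}
    N^Q_+(P\cap Q)_x \seq T^+_xQ/T_x(P\cap Q) \longhookrightarrow T^+_xM/T_xP \seq (N^M_+P)_x,
\end{equation*}
which is compatible with the $\RR_{>0}$-action and therefore induces a fiberwise injection $\SS(N^Q_+(P\cap Q))\hookrightarrow \SS(N^M_+P)|_{P\cap Q}$. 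Declaring $j^\beta$ to be this map on the new boundary and the inclusion on the complement defines a set-theoretic lift; uniqueness of any smooth (or merely continuous) lift then follows from the density of $Q\setminus P$ in $[Q:P\cap Q]$.

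Next I would prove smoothness by a local computation. Fix $x\in P\cap Q$. Using Lemma~\ref{lem.clean} as a model situation, the clean intersection assumption lets one choose a chart $\phi:U\to\RR^n_k$ on $M$ such that simultaneously
\begin{equation*}
    \phi(P\cap U) \seq L_{I,k}^n \cap \phi(U), \qquad \phi(Q\cap U) \seq L_{J,k}^n \cap \phi(U),
\end{equation*}
for some $I,J\subset\{1,\dots,n\}$, and hence $\phi(P\cap Q\cap U)=L_{I\cup J,k}^n\cap \phi(U)$ (producing such a chart is the place where one genuinely uses clean intersection together with the \psubmanifold{} condition; it is essentially a simultaneous straightening lemma). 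In these coordinates the smooth structures on $[M:P]$ and $[Q:P\cap Q]$ are both given by the standard polar/projective blow-up charts along coordinate subspaces (as recalled from \cite{AMN1, Kottke-Lin, MelroseBook}), and $j^\beta$ becomes the obvious inclusion of one such coordinate blow-up model into a larger one. Smoothness, the immersion property, and injectivity can then be read off directly; continuity of the set-theoretic map from the start makes it automatically a homeomorphism onto its image because $[Q:P\cap Q]$ is compact in a neighborhood of any point after passing to a relatively compact chart.

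From the same local model, the image of $j^\beta$ is cut out in the blow-up chart by vanishing of precisely the coordinates corresponding to $J\setminus I$, so it is a \psubmanifold{} of $[M:P]$. Finally, since $\beta_{M,P}^{-1}(Q\setminus P)=Q\setminus P$ as a subset of $M\setminus P\subset[M:P]$, and this is a dense open subset of $j^\beta([Q:P\cap Q])$ (again visible in the local model, where the added boundary points are limits of sequences in $Q\setminus P$ approaching $P$ along directions in $N^Q_+(P\cap Q)$), we get $\overline{\beta_{M,P}^{-1}(Q\setminus P)}=j^\beta([Q:P\cap Q])$.

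The main obstacle is the simultaneous straightening step: producing a single chart on $M$ that puts $P$, $Q$, and the ambient corner structure into standard coordinate form. The hypothesis that $P$ and $Q$ are \psubmanifolds{} (and not merely weak submanifolds) is essential here, as is the equality $T_x(P\cap Q)=T_xP\cap T_xQ$; once this chart is in hand, every remaining claim reduces to an inspection of the standard blow-up model.
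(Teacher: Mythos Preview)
This proposition is not proved in the paper; it is quoted verbatim from \cite{AMN1} (as the label \citeschrSpace{Proposition}{prop.beta.m1} indicates), so there is no in-paper argument to compare against. Your outline is the standard one and is essentially how the result is established in the cited reference: define $j^\beta$ on the two pieces, reduce to a local model via a simultaneous straightening chart, and read off all properties there.

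Two small points. First, your appeal to Lemma~\ref{lem.clean} is misplaced: that lemma only treats the specific case of linear subspaces and their spheres in $\oX$, not a general simultaneous straightening of cleanly intersecting \psubmanifolds. The general local normal form you need (a single corner chart in which $P=L_{I,k}^n$ and $Q=L_{J,k}^n$) is exactly the content of the clean-intersection hypothesis for \psubmanifolds, and is proved in \cite{AMN1}; you correctly flag it as the only nontrivial step. Second, your description of the image as ``cut out by the coordinates corresponding to $J\setminus I$'' is incomplete when $I\cap J\neq\emptyset$: in the blow-up chart $(r,\omega,y)$ on $[M:P]$ the image of $j^\beta$ is $\{\omega_j=0\ \text{for }j\in I\cap J,\ y_j=0\ \text{for }j\in J\setminus I\}$, so some of the angular coordinates $\omega$ must vanish as well. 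This does not affect the conclusion that the image is a \psubmanifold, but the bookkeeping matters if one wants to identify the boundary depth of the image correctly.
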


In view of the above proposition, we shall write $[Q : P \cap Q] \subset [M : P ]$,
by abuse of notation.

We have the following useful factorization lemma due (essentially) 
to Kottke \cite{Kottke-Lin}. It deals with a particular, but important
case of the iterated blow-up. We shall need to
recall the formulation of the first factorization Lemma from \cite{AMN1}, i.e.\ 
\citeschrSpace{Lemma}{lemma.factorization1}, which deals with the setting 
$Q\subset P\subset M$ (see also \cite{ACN}).

\begin{lemma} \label{lemma.cancellation1}
Let us assume that $Q$ is a \psubmanifold\ of~$P$ and that $P$ is a \psubmanifold{} 
of~$M$. Then $Q$ is a \psubmanifold\ of~$M$. Moreover, there exists a smooth, canonical map
\begin{equation*}
  \zeta_{M, Q, P} : [M: Q, P ] \ede \bigl[[M: Q]: [P:Q]\bigr] \ \to \ [M : P]
\end{equation*}  
that restricts to the identity on $M \smallsetminus P$. 
\end{lemma}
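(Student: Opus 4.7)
The plan is to split the statement into the two assertions: that $Q$ is a \psubmanifold{} of $M$, and that the iterated blow-up admits a canonical smooth map to the single blow-up, which is the identity on $M\smallsetminus P$.

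For the first assertion I would argue by composing local charts. Given $x\in Q$, the fact that $P$ is a \psubmanifold{} of $M$ yields a chart $\phi:U\to\RR_k^n$ on $M$ with $\phi(P\cap U)=L_{I,k}^n\cap\phi(U)$. On $L_{I,k}^n$ (a corner of Euclidean type) the subset corresponding to $Q$ is, by hypothesis, a \psubmanifold; choosing a further adapted chart on $L_{I,k}^n$ and composing with $\phi$ produces a chart on $M$ around $x$ in which $Q$ sits in the standard model form of Definition~\ref{def.submanifold}. Hence $Q$ is a \psubmanifold{} of $M$.

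For the existence of $\zeta_{M,Q,P}$ I would first note that since $Q\subset P$ we have $P\cap Q=Q$ and the intersection is automatically clean, so Proposition~\ref{prop.beta.m1} identifies $[P:Q]$ with a closed \psubmanifold{} of $[M:Q]$; this makes $[[M:Q]:[P:Q]]$ well defined. Define $\zeta$ to be the identity on $M\smallsetminus P$; since $M\smallsetminus P$ is open and dense in both spaces, this already determines $\zeta$ uniquely, provided a smooth extension exists. To exhibit such an extension I would work in adapted local coordinates $(y,z,w)\in\RR^p\times\RR^q\times\RR^{n-p-q}$ (the boundary factors behave identically) in which $P=\{y=0\}$ and $Q=\{y=0,z=0\}$. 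In the projective chart $z_1>0$ on $[M:Q]$ write $y=ut$, $z_1=t$, $z_j=v_jt$ for $j\geq 2$; here $[P:Q]$ is cut out by $\{u=0\}$. In the projective chart $u_1>0$ on $[[M:Q]:[P:Q]]$ set $u_i=u_1s_i$ for $i\geq 2$, giving local coordinates $(u_1,s,t,v,w)$. The composition with the blow-down to $M$ sends these to $(y,z,w)=\bigl(u_1t(1,s),(t,v_2t,\ldots,v_qt),w\bigr)$, and the formula
\begin{equation*}
(u_1,s,t,v,w)\ \longmapsto\ \Bigl(\rho_P=u_1t\sqrt{1+|s|^2},\ \theta_P=(1,s)/\sqrt{1+|s|^2},\ (t,v_2t,\ldots,v_qt),\ w\Bigr)
\end{equation*}
into the radial-angular coordinates on $[M:P]$ is manifestly smooth and agrees with the identity on $\{u_1t\neq 0\}\subset M\smallsetminus P$.

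Covering $[[M:Q]:[P:Q]]$ by the analogous projective charts (taking any nonzero component of $z$, then of $y$, as the radial variable), one obtains smooth local maps which, by uniqueness on the dense open subset $M\smallsetminus P$, automatically coincide on overlaps and glue to a global smooth map $\zeta_{M,Q,P}$ with the required properties. The main obstacle in executing this plan is the bookkeeping at the deepest face, where both blow-up radii vanish simultaneously; the chart computation above shows that the potential non-smoothness of the spherical normalisation is killed by the product of the radii, so the iterated blow-up smoothly dominates the single blow-up. The only additional care required comes from tracking the inward-pointing constraints when some coordinates lie in $[0,\infty)$, but the same polynomial formulas respect these half-space conditions, so the argument extends verbatim to the general manifold-with-corners setting.
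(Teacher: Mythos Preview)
The paper does not actually prove this lemma: it is stated as a known ``factorization lemma'' and attributed to Kottke \cite{Kottke-Lin} and to \cite{AMN1, ACN}, with no argument given in the text. Your direct approach---composing adapted charts for the transitivity of the \psubmanifold{} property, and then checking smoothness of the extension in projective blow-up coordinates---is the standard way such results are established in those references, and your computation is correct.

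A couple of minor points worth tightening. For the first assertion, you should make explicit that the chart on $P$ straightening $Q$ extends to a chart on $M$ because, in the model $\phi(U)\subset\RR_k^n$, the set $L_{I,k}^n$ is a coordinate factor and one can take the product of the straightening diffeomorphism with the identity on the complementary factor. For the second assertion, note that your chosen chart (where a $z$-coordinate is the leading variable on $[M:Q]$) does not by itself cover a full neighbourhood of the deepest corner; you also need the charts on $[M:Q]$ where some $y_i$ is the leading variable. In those charts $[P:Q]$ is absent, but the blow-down to $M$ still lands in $P$ along the front face, so one must still verify smoothness of the map into $[M:P]$ there---which follows by the same radial-angular formula $\rho_P=y_1\sqrt{1+|\tilde y|^2}$, $\theta_P=(1,\tilde y)/\sqrt{1+|\tilde y|^2}$. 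With these two additions your argument is complete.
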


\subsection{Iterated blow-ups}
In the applications of the blow-up, we typically have to blow-up
several subsets. This subsection deals with some of the intricacies of this 
procedure. The first thing to discuss is the \emph{pull-back} of a subset in $M$ to
the blow-up $[M: P]$ following \cite{AMN1, Kottke-Lin, KottkeMelrose2}.

\begin{definition}\label{def.beta.pullback}
Let $P$ be a \psubmanifold{} of $M$ and $Q$ be a closed subset of $M$. The \emph{pull-back} 
or \emph{lifting} $\beta^\ast_{M,P}(Q)$ of $Q$ to $[M: P]$ is defined by
\begin{equation}\label{eq.def.pull-back}
   \beta^\ast_{M,P}(Q) \ede \overline{\beta^{-1}(Q \smallsetminus P)} 
\end{equation}
\end{definition}

In the case $Q \subset P$, our definition of $\beta^\ast_{M,P}(Q)$ is different from the 
one given by Melrose in \cite[Chapter 5, Section 7]{MelroseBook},
see \citeschrSpace{Remark}{betaMPQremark} for details.

The following factorization lemma is similar in spirit, only easier 
(see \cite{ACN, AMN1, Kottke-Lin}).

\begin{lemma}
\label{lemma.cancellation2}
Let us assume that $P$ and $Q$ are closed, \emph{disjoint} \psbmanifolds{} of $M$. 
We have $\beta_{M, Q}^*(P) = P$ and similarly for $P$ and $Q$ switched. With these 
identifications, we then have  $\bigl[[M: Q]: P\bigr] \simeq \bigl[[M: P]: Q\bigr]$ 
canonically, and hence there exists a smooth, canonical map
\begin{equation*}
  \zeta_{M, Q, P} : \bigl[[M: Q]: P\bigr] \ \to \ [M : P]
\end{equation*}  
that restricts to the identity on $M \smallsetminus (P \cup Q)$.
\end{lemma}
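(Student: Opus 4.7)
The plan is to exploit the locality of the blow-up construction: when $P$ and $Q$ are closed and disjoint, each has an open neighborhood in $M$ that is unaffected by the blow-up along the other. So essentially we reduce to a local statement where one of the two blow-ups is trivial.

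First I would verify the claim that $\beta_{M,Q}^*(P) = P$. Since $P$ and $Q$ are closed and disjoint, they intersect cleanly (trivially, as $P \cap Q = \emptyset$), so Proposition~\ref{prop.beta.m1} applies to the inclusion $P \hookrightarrow M$ and produces a unique lift $[P : P \cap Q] \to [M : Q]$, which is an injective immersion, a homeomorphism onto its image, and whose image is a \psubmanifold. Since $P \cap Q = \emptyset$, we have $[P:\emptyset] = P$ (Remark~\ref{rem.extreme}), and the lift is just the composition $P \hookrightarrow M \smallsetminus Q \hookrightarrow [M:Q]$. Thus $P$ is in a canonical way a closed \psubmanifold{} of $[M:Q]$, and symmetrically $Q$ is a closed \psubmanifold{} of $[M:P]$. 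This makes both iterated blow-ups well-defined.

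Next I would identify the two iterated blow-ups as sets. Unpacking Definition~\ref{def.blow-up} twice and using that $P$ is disjoint from the exceptional divisor $\SS(N^M_+Q) \subset [M:Q]$ (which sits over $Q$), one obtains
\begin{equation*}
    \bigl[[M:Q]: P\bigr] \seq (M \smallsetminus (P \cup Q)) \, \sqcup\, \SS(N^{[M:Q]}_+ P) \,\sqcup\, \SS(N^M_+ Q)\,.
\end{equation*}
The key point is that $\SS(N^{[M:Q]}_+P) = \SS(N^M_+P)$ canonically: since $P$ and $Q$ are closed and disjoint, $U := M \smallsetminus Q$ is an open neighborhood of $P$ in $M$ on which $\beta_{M,Q}$ restricts to a diffeomorphism, hence $TU|_P = T[M:Q]|_P$ and $T^+U|_P = T^+[M:Q]|_P$, giving the desired identification of inward-pointing spherical normal bundles. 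The same computation with the roles of $P$ and $Q$ swapped gives the set-level bijection $\bigl[[M:Q]:P\bigr] \leftrightarrow \bigl[[M:P]:Q\bigr]$.

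Finally I would promote the set-level bijection to a diffeomorphism, and then define $\zeta_{M,Q,P}$. Smoothness is again local: over the open subset $[M:Q] \smallsetminus \SS(N^M_+Q) = M \smallsetminus Q$ the second blow-up equals the interior blow-up $[M\smallsetminus Q : P]$, and over the open subset $[M:Q]$ minus (a closed neighborhood of) $P$ the second blow-up is the identity; combining, one covers $\bigl[[M:Q]:P\bigr]$ by two open sets on each of which the bijection with $\bigl[[M:P]:Q\bigr]$ is manifestly smooth (and the transition is smooth because both sides restrict to the identity on $M \smallsetminus (P \cup Q)$). Once the canonical isomorphism $\bigl[[M:Q]:P\bigr] \simeq \bigl[[M:P]:Q\bigr]$ is established, we define $\zeta_{M,Q,P}$ to be the composition of this isomorphism with the blow-down $\beta_{[M:P],Q} : [[M:P]:Q] \to [M:P]$. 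On $M \smallsetminus (P \cup Q)$ both maps are the identity, so $\zeta_{M,Q,P}$ restricts to the identity there. The main (mild) obstacle is bookkeeping the three types of points—interior, over $P$, over $Q$—and verifying compatibility of the smooth structures through the canonical identifications; all of this is handled by the locality of the blow-up away from the center.
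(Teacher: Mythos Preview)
Your proof is correct. The paper itself does not prove this lemma: it merely states it, preceded by the remark that it is ``similar in spirit, only easier'' than Lemma~\ref{lemma.cancellation1}, with a reference to \cite{ACN, AMN1, Kottke-Lin}. Your argument via locality of the blow-up---identifying $\beta_{M,Q}^*(P)$ with $P$ through Proposition~\ref{prop.beta.m1} and Remark~\ref{rem.extreme}, matching the inward-pointing spherical normal bundles using that $\beta_{M,Q}$ is a diffeomorphism over $M\smallsetminus Q$, and then defining $\zeta_{M,Q,P}$ as the composition with $\beta_{[M:P],Q}$---is exactly the standard justification one would give, and it fills in the details the paper omits.
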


In order to introduce the iterated blow-up with respect to an ordered family 
($n$-tuple) of closed subsets, we first introduce the families with
respect to which we can define the blow-up. We found it convenient to allow repetitions 
in these families. The following is Definition~\refschrSpace{def.iterated.bu} in \cite{AMN1} 
and Definition~2.9 in \cite{ACN}, but see also \cite{Kottke-Lin} and \cite{MelroseBook}.

\begin{definition}\label{def.bu.suitable}
Let $M$ be a manifold with corners and let $\maP \coloneqq  (P_i)_{i=1}^k$ be a  
$k$-tuple of closed subsets of $M$, $k \ge 1$. If $P_1$ is a \emph{closed \psubmanifold} 
of $M$ and $k > 1$, we define the \emph{pull-back} of $\maP$ to be the $(k-1)$-tuple
\begin{equation*} %\label{eq.def.pbt}
    \maP' \ede \beta_{M, P_1}^*(\maP \smallsetminus \{P_1\}  ) 
    \ede \bigl(\beta_{M, P_1}^*(P_i)\bigr)_{i=2}^k\,.
\end{equation*}
Then, by induction on $k$, we say that $\maP$ is \emph{blow-up-suitable (in $M$)} if:
\begin{enumerate}[(i)]
\item $P_1$ is a \emph{closed \psubmanifold} of $M$ and 
\item \label{item.eq.def.pbt} if $k>1$, the pull-back
$\maP'$ is blow-up-suitable in $[M:P_1]$.
\end{enumerate}
Of course, $k = |\maP|$. If $k = 0$, that is, if $\maP = \emptyset$, then
we also say that $\maP$ is blow-up-suitable in $M$.
\end{definition}

In what follows, we will often use the pull-back $\maP'$ of various $k$-tuples
$\maP$. We now define the iterated blow-up with respect to blow-up-suitable families.

\begin{definition}\label{def.iterated.bu}
We use the notation introduced in Definition \ref{def.bu.suitable}, in particular, 
$\maP \coloneqq  (P_i)_{i=1}^k$ is a $k$-tuple of closed subsets 
of $M$ and $\maP' \coloneqq \beta_{M, P_1}^*(\maP \smallsetminus \{P_1\}  ) 
\coloneqq \bigl(\beta_{M, P_1}^*(P_i)\bigr)_{i=2}^k$ (the pull-back of $\maP$). 
If $\maP$ is blow-up-suitable, then we define by induction the \emph{iterated blow-up}
$[M: \maP]$ by
\begin{equation*}
  [M: \maP] \seq [M : (P_i)_{i=1}^k] \ede
  \begin{cases}
    \ [M: P_1] & \mbox{ if }\ k = 1\,,\\
   \, \big[[M:P_1]: \maP' \big] &
   \mbox{ if }\ k > 1\,.
   \end{cases}
\end{equation*}
The blow-down map $\beta_{M, \maP} : [M: \maP] \to M $ is defined by induction 
on $k$ as the composition of the blow-down maps $[M: \maP] := [[M: P_1] : \maP'] \to 
[M: P_1] \to M$. 
\end{definition}

It is not difficult to see, by induction, that Definition 
\ref{def.iterated.bu} makes sense. This is the point of introducing
Definition \ref{def.bu.suitable}. We shall also use $[M: P_1, P_2, \ldots, P_k]$ 
as an alternative notation for $[M: \maP]$. Let $\maE := \emptyset$,
(which is regarded as a $k$-tuple with $k = 0$) and define $[M: \maE] = M$
and $\maP' = \maE$ for if $\maP$ has only one element. Then the relation 
$[M: \maP] =  \big[[M:P_1]: \maP' \big]$ remains true also for $k = 1$.
This may be useful for proofs by induction, for instance in the proof
of Lemma \ref{lemma.disjoint.sd}.

\begin{remark}\label{rem.Pprime}
Of course, a $k$-tuple $\maP$ \emph{without} 
repetitions is the same thing as a \emph{linearly ordered set}. 
We want the $\maP$ in the definition of the iterated blow-up
$[M: \maP]$ to be a $k$-tuple rather than a linearly ordered finite 
set of \psubmanifolds. That is, we want to \emph{allow repetitions} in $\maP$. The
reason for this choice is that, even if $\maP$ does not have repetitions,
its pull-back $\maP' \coloneqq  \bigl(\beta^*(P_i)\bigr)_{i=2}^k$ might have repetitions.
An example is provided by $\maP = (A, B, A \cup B)$, where $A$ and $B$
are disjoint closed \psubmanifolds\ of $M$, in which case $\maP' = (B, B)$. 
We shall often consider semilattices with an additional total order (other than
the order given by inclusion).
\end{remark}

In the next remark we will explain how to eliminate repetitions. 
For this purpose, the following proposition is a useful technical result.
Recall that~$\unionP \coloneqq  \bigcup_{P \in \maP} P$.

\begin{proposition} \label{prop.density} 
We use the notations of Definition \ref{def.iterated.bu} and we
assume that $\maP$ is blow-up-suitable (and hence that $[M: \maP]$ is defined). Then 
\begin{enumerate}[(i)]
\item $M \smallsetminus \unionP \subset [M: \maP]$ and $\beta = \id$ 
on $M \smallsetminus \unionP$ and
\item $M \smallsetminus \unionP$ is open and dense in $[M: \maP]$.
\end{enumerate}
\end{proposition}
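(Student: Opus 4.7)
I would prove both assertions simultaneously by induction on $k \coloneqq |\maP|$. The base case $k=0$ is tautological since $[M:\maE] = M$, $\unionP = \emptyset$, and $\beta_{M,\maE} = \id$. For the inductive step, I would use Definition~\ref{def.iterated.bu} to write $[M:\maP] = \bigl[[M:P_1]:\maP'\bigr]$ with $\maP' \ede (\beta_{M,P_1}^*(P_i))_{i=2}^k$, and $\beta_{M,\maP} = \beta_{M,P_1}\circ \beta_{[M:P_1],\maP'}$; the inductive hypothesis applied to $([M:P_1],\maP')$ then supplies the desired properties for $[M:P_1]\setminus\unionma{P'}$ inside $[M:\maP]$.

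The technical heart of the argument is the identity
\[
  (M\setminus P_1) \setminus \unionma{P'} \seq M \setminus \unionP
\]
of subsets of $[M:P_1]$. Because $\beta_{M,P_1}$ is the identity on $M\setminus P_1$ and the pull-back $\beta_{M,P_1}^*(P_i) = \overline{\beta_{M,P_1}^{-1}(P_i \setminus P_1)}$ differs from $\beta_{M,P_1}^{-1}(P_i\setminus P_1)$ only by points lying on the exceptional divisor $\SS(N_+^MP_1)$, I get $\beta_{M,P_1}^*(P_i) \cap (M\setminus P_1) = P_i \setminus P_1$ for every $i\ge 2$; taking the union over $i$ and then complements in $M\setminus P_1$ yields the displayed identity.

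Granted this identity, statement (i) is immediate: $M\setminus\unionP$ sits inside both $M\setminus P_1$ and $[M:P_1]\setminus\unionma{P'}$, on each of which the relevant blow-down map is the identity by the base case and the inductive hypothesis respectively, so the composition $\beta_{M,\maP}$ is the identity on $M\setminus\unionP$. For statement (ii), I would combine three ingredients: $M\setminus P_1$ is open and dense in $[M:P_1]$ (standard property of a single blow-up, since every point of $\SS(N_+^MP_1)$ is a limit of points of $M\setminus P_1$); the identity above realises $M\setminus\unionP$ as the intersection $(M\setminus P_1)\cap([M:P_1]\setminus\unionma{P'})$; and by the inductive hypothesis $[M:P_1]\setminus\unionma{P'}$ is open and dense in $[M:\maP]$. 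Openness is then an intersection of two open sets. For density I would use a short transitivity step: any non-empty open $U\subset [M:P_1]\setminus\unionma{P'}$ is open in $[M:P_1]$ and so meets $M\setminus P_1$ by density; since $U$ is already disjoint from $\unionma{P'}$, this intersection is contained in $M\setminus\unionP$, so $M\setminus\unionP$ is dense in $[M:P_1]\setminus\unionma{P'}$, which is itself dense in $[M:\maP]$.

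The only genuinely non-routine step is the bookkeeping around the pull-back: one must distinguish $\beta_{M,P_1}^*(P_i)$ from $\beta_{M,P_1}^{-1}(P_i)$ and verify that components of the former sitting in the exceptional divisor do not contribute to $\unionma{P'}\cap(M\setminus P_1)$. Once the key identity is cleanly established, the remainder is elementary point-set topology plus the inductive hypothesis, and the proof of both parts closes in one step.
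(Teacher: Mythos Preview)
Your proof is correct and follows essentially the same inductive strategy as the paper: both establish the key identity $(M\setminus P_1)\setminus\unionma{P'} = M\setminus\unionP$, then use density of $M\setminus P_1$ in $[M:P_1]$ together with the inductive hypothesis on $\maP'$ to conclude. Your version is more explicit about the pull-back bookkeeping and the transitivity of density, which the paper compresses into a single line, but the argument is the same.
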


\begin{proof}
The first part follows from the definitions of the blow-up and of the iterated
blow-up. To prove the second part, we proceed by induction on $k$, the number
of elements of~$\maP$. The case $k = 1$ follows from the definition (and was
discussed also in \cite{AMN1}). For the induction step, we notice that
\begin{equation*}
   M \smallsetminus \unionP \seq
   (M \smallsetminus P_1) \smallsetminus \unionP' \subset 
   [M :P_1] \smallsetminus \unionP'
\end{equation*}
is dense in $[M : P_1]  \smallsetminus \unionP'$ by the case $k = 1$, 
since $\unionP'$ is closed. Since $[M : P_1]  \smallsetminus \unionP'$ is dense 
in $[M: \maP] \coloneqq  
\big[[M:P_1]: \maP' \big]$ by the case $k-1$ (the induction hypothesis), 
the result follows.
\end{proof}

Given two blow-ups of $M$, we shall say that they are \emph{canonically
diffeomorphic} if there exists a diffeomorphism that is the identity outside
the sets that are blown up. In view of the above proposition, this entails
a uniqueness property for the canonical diffeomorphisms.
We now explain how we can eliminate the repetitions in $\maP$.

\begin{remark} \label{rem.repetitions}
We use the notation introduced in Definition~\ref{def.iterated.bu}
and the results of Proposition~\ref{prop.density}. Let $\maP_{\mathrm{red}}$ be 
obtained from $\maP$ by removing repetitions by keeping only the \emph{first} 
appearance of a \psubmanifold. Thus, if $\maP = (\emptyset, A, B, A)$, then 
$\maP_{\mathrm{red}} = (\emptyset, A, B)$. Then, $[M: \maP]$ is defined if, and only if,
$[M: \maP_{\mathrm{red}}]$ is defined. Moreover, these two iterated blow-ups are
canonically diffeomorphic (when defined), in the sense that:
\begin{enumerate}[(i)]
  \item $M \smallsetminus \unionP = M \smallsetminus \unionP_{\mathrm{red}}$;
  
  \item \label{item.rem.repetitions.ii} $M \smallsetminus \unionP$ is dense in $[M: \maP]$ 
  and $M \smallsetminus \unionP_{\mathrm{red}}$ is dense in $[M: \maP_{\mathrm{red}}]$,
  by Proposition \ref{prop.density}; and 
   
  \item the identity map $M \smallsetminus \unionP \to M \smallsetminus \unionP_{\mathrm{red}}$ 
  extends to a diffeomorphism $[M: \maP] \to [M: \maP_{\mathrm{red}}]$, which is unique
  by the density properties of \ref{item.rem.repetitions.ii}. 
\end{enumerate}
\end{remark}

In addition to removing repetitions, we could as well remove entries $P_j = \emptyset$
from $\maP$ with the same effect. However, we found it convenient for exposition 
purposes to do exactly the opposite, that is, to usually assume $\emptyset$ to 
be the first entry of $\maP$, especially if $\maP$ is a semilattice. We have the 
following properties.

\begin{remark} \label{rem.repetitions2}
We use the notation introduced in Definition~\ref{def.iterated.bu}. 
In what follows, the pull-back operation $\maP \mapsto \maP'$
introduced in 
\begin{enumerate}[(i)]
     \item We have $[M: \maP] \coloneqq [[M: P_1], \maP'] = [[M: P_1] : \maP'_{\mathrm{red}}]$, 
     which may be useful if one wants to deal only   with \emph{reduced} tuples (which is 
     the same as linearly ordered finite sets).
     \item Let us introduce the \emph{iterated pull-backs} of $\maP$ by
     $\maP^{(1)} \coloneqq \maP'$ and $\maP^{(k+1)} \coloneqq (\maP^{(k)})'.$
     Then 
\begin{equation*}
     [M: \maP] \seq [[M: P_1, P_2, \ldots, P_k]: \maP^{(k+1)}] \,.
\end{equation*}  
     \item $(\maP'_{\mathrm{red}})'_{\mathrm{red}} = \maP''_{\mathrm{red}}$ 
     (where $'$ always comes before ${}_{\mathrm{red}}$, meaning
     that $\maP'_{\mathrm{red}} \coloneqq (\maP')_{\mathrm{red}}$).  
     \item\label{jrhesgs} A special situation arises if $P_1$, the first element of $\maP$, is 
     $P_1 = \emptyset$, which is, in fact,
     the norm when dealing with semilattices. Then, in our definition of the blow-up,
     the first step, the blow-up with respect to $P_1$ is trivial (it does not
     change our sets, except that it removes $P_1$ from the list). In particular, 
     $\maP = (\emptyset, \maP')$. It is the next
     blow-up that may be interesting. For us, it will then be useful to introduce
     the following notation
     \begin{equation*}
         \widetilde \maP \ede (\emptyset, \maP'') \ede (\emptyset,
         \beta_{M: P_2}^*(P_3), \beta_{M: P_2}^*(P_4), \ldots , \beta_{M: P_2}^*(P_k)) \,.
     \end{equation*}
     (Note that $\beta_{M: P_2}^*(P_2) = \emptyset$.) We then have the relation
     \begin{equation*}
        [M: \maP] \seq [M: \maP'] \seq [[M: P_2]: \maP''] 
        \seq [[M: P_2]: \widetilde \maP] 
        \seq [[M: P_2]: \widetilde \maP_{\mathrm{red}}]\,.
     \end{equation*}
     We also notice that, if $\maP_{\mathrm{red}}$ is a clean semilattice,    
     then $\widetilde \maP_{\mathrm{red}} = (\widetilde {\maP_{\mathrm{red}}})_{\mathrm{red}}$ 
     is also a clean semilattice;  here the semilattice property is obvious and the 
     cleanness was proved in \cite[Theorem~2.8]{ACN}. (In this context we define again
     $\widetilde \maP_{\mathrm{red}} \coloneqq (\widetilde \maP)_{\mathrm{red}}$,
     that is, the ``tilde'' comes before ``reduced''.)
     \item \label{item.iter}
     To wrap up the list of needed properties, 
     let us notice that 
\begin{equation*}
     (\emptyset, \maP)' = \maP \ \mbox{ and } \ [M: (\emptyset, \maP)] = [M: \maP] \,.
\end{equation*}     
     In particular, if we define $\maP_0 := \maP^{(0)} := \maP$,
     $\maP_{k+1} := \widetilde \maP_k$ and $\maP^{(k+1)} = (\maP^{(k)})'$, then $\maP_{k}
     = (\emptyset, \maP^{(k+1)})$ for $k \ge0$. Hence the iteration of the tilde operation 
     can be expressed in terms of the iteration with respect to the 
     prime operation and the addition of the empty set.  
\end{enumerate}
\end{remark}

\color{black}

The following results generalize to the iterated blow-up 
\citeschrSpace{Lemma}{lemma.product} on the compatibility of the blow-up with
products.

\begin{proposition}\label{prop.lemma.product} 
Let $M$ and $M_1$ be two manifolds with corners and 
$\maP = (P_1, P_2, \ldots, P_k)$ be a blow-up-suitable (in $M$) 
$k$-tuple of closed subsets of $M$. Then $\maP \times M_1 \coloneqq (P_1 \times M_1, 
P_2 \times M_1, \ldots, P_k \times M_1)$ is a blow-up-suitable (in $M \times M_1)$
$k$-tuple of closed subsets of $M \times M_1$ and there exists a canonical
diffeomorphism $[M \times M_1 : \maP \times M_1] \simeq [M: \maP] \times M_1$
such that the following diagram commutes:
\begin{equation}\label{eq.CD}
\begin{CD}
  [M \times M_1 : \maP \times M_1] @>{ \simeq }>>[M: \maP] \times M_1 \\
  @V{ \beta_{M \times M_1, \maP \times M_1}}VV
  @VV{\beta_{M,\maP} \times \id}V \\
   M \times M_1 @>{ \id }>> M \times M_1 \,.
\end{CD}
\end{equation}
\end{proposition}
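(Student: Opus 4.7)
The plan is to proceed by induction on $k = |\maP|$, using the non-iterated product compatibility \citeschrSpace{Lemma}{lemma.product} as the base case.

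For the base case $k=1$, the statement is exactly the cited lemma from \cite{AMN1}: $P_1 \times M_1$ is a closed \psbmanifold{} of $M \times M_1$ (obvious from local charts), and there is a canonical diffeomorphism $[M \times M_1 : P_1 \times M_1] \simeq [M: P_1] \times M_1$ intertwining the blow-down maps with $\beta_{M,P_1} \times \id$.

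For the inductive step, assume the result for $(k-1)$-tuples and let $\maP = (P_1, \ldots, P_k)$ be blow-up-suitable in $M$. First, $P_1 \times M_1$ is a closed \psbmanifold{} of $M \times M_1$, so by the base case we get the canonical diffeomorphism $\Phi : [M \times M_1 : P_1 \times M_1] \xrightarrow{\sim} [M:P_1] \times M_1$ intertwining blow-downs. The key intermediate step is to verify that, under $\Phi$, the pull-back $(\maP \times M_1)'$ of $\maP \times M_1$ is identified with $\maP' \times M_1$, that is,
\begin{equation*}
   \Phi\bigl(\beta_{M \times M_1, P_1 \times M_1}^*(P_i \times M_1)\bigr) \seq \beta_{M, P_1}^*(P_i) \times M_1
\end{equation*}
for $i = 2, \ldots, k$. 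Using Definition~\ref{def.beta.pullback} and that $\Phi$ intertwines the blow-downs with $\beta_{M,P_1} \times \id$, we have
\begin{equation*}
\begin{aligned}
\Phi\bigl(\beta^{-1}((P_i \times M_1) \setminus (P_1 \times M_1))\bigr) &\seq (\beta_{M,P_1} \times \id)^{-1}\bigl((P_i \setminus P_1) \times M_1\bigr)\\
&\seq \beta_{M,P_1}^{-1}(P_i \setminus P_1) \times M_1\,,
\end{aligned}
\end{equation*}
and taking closures commutes with the product with $M_1$ (since $M_1$ is a factor and $\Phi$ is a homeomorphism), yielding the desired identification.

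Now the induction hypothesis applied to the $(k-1)$-tuple $\maP'$ in the manifold with corners $[M:P_1]$, together with the identification above, shows that $(\maP \times M_1)'$ is blow-up-suitable in $[M \times M_1 : P_1 \times M_1]$ and that
\begin{equation*}
   [[M:P_1] \times M_1 : \maP' \times M_1] \simeq [[M:P_1] : \maP'] \times M_1 \seq [M:\maP] \times M_1\,,
\end{equation*}
canonically and compatibly with the blow-down maps. Composing with $\Phi$ as in Definition~\ref{def.iterated.bu} gives the required diffeomorphism $[M \times M_1 : \maP \times M_1] \simeq [M:\maP] \times M_1$, and the commutativity of diagram~\eqref{eq.CD} follows from the commutativity at each stage. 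The main (modest) obstacle is simply the bookkeeping of pull-backs under $\Phi$ in the paragraph above; once that identification is established, the inductive step is a formal composition.
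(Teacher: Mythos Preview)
Your proof is correct and follows essentially the same approach as the paper: induction on $k$, with the $k=1$ case taken from \cite{AMN1} (or \cite{ACN}), and the inductive step reduced to the identification $\beta^*(P_i \times M_1) = \beta^*(P_i) \times M_1$ under the base-case diffeomorphism. The paper's proof states exactly this identification as the key ingredient but does not spell out the closure argument; your version simply makes that step explicit.
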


\begin{proof}
If $k=1$ (that is, $\maP$ consists of a single set), then the result was
proved in \cite{ACN} (it can be found also in \cite{AMN1}). In general, 
it follows by induction, using again
the result from \cite{ACN} (the case $k = 1$) and using also
that $\beta^*(P) \times M_1 = \beta^*(P \times M_1)$, where
$\beta$ is an appropriate blow-down map.
\end{proof}

\subsection{Admissible orders}
A natural question when we do iterated blow-ups of a manifold with 
corners $M$ along an ordered family $\maP = (P_0, P_1, \ldots, P_k)$, 
is to decide how the order of the blow-up influences the final space
\cite{Kottke-Lin}. In particular, a related question is 
whether the iterated blow-up is defined for a given order.
The aim of this subsection is to recall the results of \cite{ACN, AMN1, Kottke-Lin}
that give a positive answer to these questions if ``admissible orders'' are
used. Before stating the main result from \cite{AMN1}, we first introduce
admissible orders and graph blow-ups, which will be needed for the
statement of the theorem.

If, in the definition of a blow-up-suitable $k$-tuple $\maP$,
we further require $P_1$ to be minimal for inclusion, we obtain the notion
of an ``admissible'' $k$-tuple. Let us state this explicitly.

\begin{definition}\label{def.admissible} 
Let $M$ be a manifold with corners and let $\maP \coloneqq  (P_i)_{i=1}^k$ be a  
$k$-tuple of closed subsets of $M$, $k \ge 1$. By induction on $k$, 
we say that $\maP$ is \emph{admissible (in $M$)} if:
\begin{enumerate}[(i)]
\item $P_1$ is a \emph{closed \psubmanifold} of $M$,
\item there is no $i > 1$ such that $P_i \subsetneqq P_1$, and
\item \label{item.eq.def.pbt2} if $k>1$, 
$\maP'$ is admissible in $[M:P_1]$, where $\maP'=(\beta_{M,P_1}^*(P_j))_{j=2}^k$ 
is the pull-back of $\maP$ as before (in particular, $\beta^*_{M,P_1}$ is as defined 
in \eqref{eq.def.pull-back}).
\end{enumerate}
Of course, $k \ge |\maP|$. If $k = 0$, that is, if $\maP = \emptyset$, then
we also say that $\maP$ is admissible in $M$.
\end{definition}

This definition of and admissible $k$-tuple 
is more general than the one in \cite{AMN1}.

\begin{remark}\label{rem.semilattice.t}
We notice the following
\begin{enumerate}
\item A $k$-tuple with an admissible order is, in particular,
also blow-up-suitable.

\item $\maP$ is admissible if, and only if, $(\emptyset, \maP)$ is admissible.

\item Assume the $(k+1)$-tuple $\maS = (P_0, P_1, \ldots, P_k)$ is a clean semilattice, 
$P_0 = \emptyset$. Then $\widetilde \maS \coloneqq \bigl (\emptyset, 
\beta_{M, P_1}^*(P_2), \beta_{M, P_1}^*(P_3), ... , \beta_{M, P_1}^*(P_k) \bigr)$ 
is also a clean semilattice by the results of \cite{ACN}, 
see Remark~\ref{rem.repetitions2} \eqref{jrhesgs}.

Moreover, $\maS$ (with the indicated order) is
admissible if, and only if, $\widetilde{\maS}$ (with the induced order) is admissible. 
This explains why we sometimes consider $\widetilde \maS$ and do not work exclusively with
the iterated pull-backs $\maS', \maS'', \ldots, \maS^{(j)}$.
\end{enumerate}
\end{remark}

We now consider a different type of blow-up with respect to a $k$-tuple
of \psubmanifolds\ that is immediately seen not to depend on the
choice of the order. 
To define it, we introduce the \emph{multi-diagnonal map}. For a semi-lattice $\maS$
endowed with an admissible order,
we let $U\coloneqq M\smallsetminus \unionma{S}$, which is a dense subset of $[M:\maS]$. 
The multi-diagonal map is the map
  \begin{equation}\label{eq.multi.diag.map}
    \delta:U \to \prod_{P\in \maS} [M:P], \quad     x \mapsto  (x,x,\ldots,x).
  \end{equation}

\begin{definition}\label{def.unres.blowup}
Let $\maP = (P_i)$ be a $k$-tuple of closed 
\psbmanifolds{} of the manifold with corners~$M$ and let $\delta$ be the multi-diagonal map
defined in \eqref{eq.multi.diag.map}.
Then the \emph{graph blow-up} $\bl{M: \maP}$ of $M$ along $\maP$ is defined by
\begin{equation*}
    \bl{M: \maP} \ede \overline{ 
    \delta(M\smallsetminus \unionP) }
    \seq       
    \overline{ \{ (x, x, \ldots, x) \, \vert \ x \in
    M\smallsetminus \unionP \} } \ \subset\ \prod_{i \in I} [M: P_i] \,.
\end{equation*}
\end{definition}
For the graph blow-up, we can also remove the repetitions.

\begin{remark}
The graph blow-up  $\bl{M: \maP}$ is a weak submanifold of $Q\coloneqq \prod_{i \in I} [M: P_i]$. 
The notion of a ``weak submanifold'' was introduced in \cite[Subsec.~2.3.1]{AMN1}, however the 
only aspect we need to know about it here is that, if one restricts the sheaf of smooth functions 
on $Q$ to $\bl{M: \maP}$, then this restriction defines the structure of a manifold with corners 
on $\bl{M: \maP}$ which is compatible with the topology induced from $Q$.
\end{remark}

Theorem~\refschrSpace{thm.main1} of \cite{AMN1} shows that, if $\maS$ is a semilattice endowed 
with an admissible order, then the iterated blow-up and the graph blow-ups of $M$ with 
respect to $\maS$ are canonically diffeomorphic. The following statement combines the
statement of that theorem with part of Remark~\refschrSpace{rem.size-order} %4.13 
and with Theorem~\refschrSpace{thm.cor.main1} %4.21
of that paper.

\begin{theorem}\label{theorem.mainAMN1}
Let $\maS \ni \emptyset$ be a clean semilattice of closed \psbmanifolds{} of~$M$ with 
an admissible order on its elements (Definition \ref{def.admissible}). Then the
iterated blow-up $[M: \maS]$ is defined and $\bl{M:\maS}$ is a manifold with corners 
that is canonically diffeomorphic to $[M:\maS]$. If $G$ is a Lie group acting 
smoothly on $M$ such that $G$ maps $\maS$ to itself, then $G$ acts by diffeomorphisms
on $[M: \maS]$.
\end{theorem}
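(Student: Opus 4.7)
The plan is to argue by induction on $|\maS|$, with the base case $|\maS|=1$ (so $\maS = \{\emptyset\}$) being trivial since both $[M:\emptyset]$ and $\bl{M:\emptyset}$ equal $M$. For the inductive step, I would let $P_1$ be the first element of $\maS$ after $\emptyset$ in the admissible order. Admissibility forces $P_1$ to be a closed \psubmanifold\ of $M$ and minimal (no $P_i \subsetneq P_1$ for $i>1$), so the first blow-up $[M:P_1]$ is defined by Definition~\ref{def.blow-up}. Setting $\maS' \coloneqq \beta_{M,P_1}^*(\maS \smallsetminus \{P_1\})$ as in Definition~\ref{def.bu.suitable}, I would invoke \cite[Thm.~2.8]{ACN} (already cited in Remark~\ref{rem.repetitions2}\eqref{jrhesgs}) to conclude that $\widetilde\maS_{\mathrm{red}}$ is again a clean semilattice of \psbmanifolds\ of $[M:P_1]$ with admissible order induced from $\maS$, and strictly fewer elements. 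By the induction hypothesis, the iterated blow-up $[[M:P_1]:\maS']$ is therefore defined and diffeomorphic to the graph blow-up $\bl{[M:P_1]:\maS'}$.

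Next I would identify $\bl{M:\maS}$ with this inductively constructed space. The key tools are Proposition~\ref{prop.beta.m1} together with Lemmas~\ref{lemma.cancellation1} and~\ref{lemma.cancellation2}: for every $P \in \maS \setminus \{P_1\}$, the factor $[M:P]$ in the product $\prod_{P\in\maS}[M:P]$ receives a canonical map from $[[M:P_1]:\beta_{M,P_1}^*(P)]$ (using $\zeta_{M,P_1,P}$ when $P_1\subset P$ via Lemma~\ref{lemma.cancellation1}, and Lemma~\ref{lemma.cancellation2} in the disjoint case; cleanness of the semilattice ensures these are the only two local situations for pairs of elements). These maps, taken together, identify the image of the multi-diagonal for $(\maS', [M:P_1])$ with the image of the multi-diagonal for $(\maS, M)$ outside the blown-up loci. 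Taking closures in the respective products and using Proposition~\ref{prop.density} to ensure density of the unblown-up part yields the canonical diffeomorphism $\bl{M:\maS} \simeq [M:\maS]$; canonicity is forced by density (as noted just before Remark~\ref{rem.repetitions}).

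For the $G$-equivariance, the graph blow-up description is what makes the action transparent. Since $G$ preserves $\maS$ setwise, each $g \in G$ induces a permutation $\sigma_g$ of $\maS$; by the functoriality of the blow-up along a \psubmanifold, $g$ lifts to a smooth map $[M:P] \to [M:\sigma_g(P)]$ for each $P \in \maS$. Combining these lifts with the permutation $\sigma_g$ of the index set produces a smooth action of $G$ on $\prod_{P \in \maS}[M:P]$. This action clearly preserves the multi-diagonal $\delta(M \smallsetminus \unionS)$, so it preserves its closure $\bl{M:\maS}$. Transporting the action back across the diffeomorphism $\bl{M:\maS}\simeq [M:\maS]$ established above gives the desired smooth $G$-action.

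The main obstacle I expect is the second step, namely checking that the two iterated pull-backs match up consistently for \emph{all} pairs $(P_1,P)$ simultaneously, so that the multi-diagonals really can be compared factor-by-factor. Cases where $P_1 \subset P$, where $P_1 \cap P$ is a proper non-empty \psubmanifold\ of both, and where $P_1 \cap P = \emptyset$ must all be controlled using the cleanness hypothesis; verifying that the two factorization lemmas suffice here, and that their compositions glue compatibly, is the technical heart of the argument and is where reference to the detailed proofs in \cite{AMN1} becomes essential.
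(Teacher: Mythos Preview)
The paper does not actually prove Theorem~\ref{theorem.mainAMN1}; it is stated as a combination of results from \cite{AMN1} (specifically Theorem~\refschrSpace{thm.main1}, Remark~\refschrSpace{rem.size-order}, and Theorem~\refschrSpace{thm.cor.main1}) and no proof is given here. So there is no in-paper argument to compare your proposal against; I can only assess your sketch on its own terms.

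Your inductive strategy is the right one and matches the spirit of the arguments in \cite{ACN, AMN1}. One point to tighten: in step~4 you write that ``cleanness of the semilattice ensures these are the only two local situations'' (namely $P_1\subset P$ or $P_1\cap P=\emptyset$). This dichotomy is not a consequence of cleanness but of the \emph{semilattice} property together with the \emph{minimality} of $P_1$ among nonempty elements (which admissibility guarantees): since $P_1\cap P\in\maS$ and $P_1\cap P\subset P_1$, minimality forces $P_1\cap P\in\{P_1,\emptyset\}$. Consequently the ``third case'' you worry about in your final paragraph---$P_1\cap P$ a proper nonempty \psubmanifold{} of both---simply does not occur, and Lemmas~\ref{lemma.cancellation1} and~\ref{lemma.cancellation2} do suffice. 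Cleanness is needed elsewhere (to ensure the lifts $\beta_{M,P_1}^*(P)$ are again \psubmanifolds{} and that $\widetilde\maS_{\mathrm{red}}$ is a clean semilattice, via \cite[Theorem~2.8]{ACN}), but not for this dichotomy.

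Your treatment of the $G$-action via the graph blow-up is exactly the argument the paper has in mind (it is the content of Theorem~\refschrSpace{thm.cor.main1} in \cite{AMN1}): the product $\prod_{P\in\maS}[M:P]$ carries an obvious $G$-action permuting factors, and the closure of the multi-diagonal is preserved.
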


In the theorem ``canonically diffeomorphic'' means that the multi-diagonal map defined in \eqref{eq.multi.diag.map}
 has a unique smooth extension which provides this diffeomorphism.

The iterated blow-up does not depend on the order of the sets 
(up to a canonical diffeomorphism). Hence, an immediate consequence of the 
last theorem is that the iterated blow-up $[M: \maS]$ does not depend
on the choice of the \emph{admissible} order on $\maS$. 

The reader may wonder, at this time, whether admissible orders exists on a given 
$k$-tuple. Note that, at this time, it is not clear even that a blow-up-suitable
order exists on a given $k$-tuple of closed \psbmanifolds{} of $M$. 
The following proposition gives a positive answer
to this question if our $k$-tuple is a \emph{clean semilattice}. This result and the
previous theorem motivates the use of semilattices in our work.

\begin{remark}\label{rem.ex.ao}
Let $\maS$ be a finite, clean semilattice of closed \psbmanifolds{} of $M$, 
$\emptyset \in \maS$. Let us assume construct an order $(P_0, P_1,\ldots, P_k)$ 
on the elements of $\maS = \{P_0, P_1,\ldots, P_k\}$
by choosing $P_j \in \maS$ by induction on $0 \le j \le k= |\maS|-1$ as follows.
\begin{enumerate}[(i)] 
   \item Let us choose an arbitrary initial order on the elements of $\maS$,
   to be able to talk about pull-backs. With each choice of $P_j$, we
   modify this order by moving $P_j$ on the $(j+1)$-position. (So, after
   choosing $P_j$, the first $(j+1)$-elements of the modified order will be 
   the chosen elements $(P_0, P_1, \ldots, P_j)$.)
   
   \item We begin by choosing $P_0 \coloneqq \emptyset$, which implies that 
   the pull-back $\maS'$ is given by $\maS' = \maS \smallsetminus \{\emptyset\}$ 
   (after which we change the order on $\maS$ according to the previous point). 
   
   \item We let $P_1$ to be an arbitrary minimal element of $\maS'$ for
   inclusion. 
   
   \item Let $$\widetilde \maS \coloneqq (\emptyset, \maS'') \coloneqq
   \big (\emptyset, \beta_{M, P_1}^*(\maS \smallsetminus \{\emptyset, P_1 \}) \big )
   = \beta_{M, P_1}^*(\maS \smallsetminus \{\emptyset\}) \,.$$ 
   Then the elements of $\widetilde \maS$ form again a clean semilattice 
   of closed \psbmanifolds{} of~$M$ \cite[Theorem~2.8]{ACN} and
   we choose $P_2 \in \maS \smallsetminus \{P_0, P_1\}$ such that 
   the lift $\beta_{M, P_1}^*(P_2)$ of $P_2$ is
   minimal element of $\maS''$ for inclusion. In particular, $\beta_{M, P_1}^*(P_2)$ 
   will be a closed \psbmanifold{} of $[M: P_1]$ by the aforementioned
   result in \cite{ACN}.
   
   \item We then iterate this construction with $\widetilde{\maS}$
   in place of $\maS$. More precisely, recall from Remark \ref{rem.repetitions2}(\ref{item.iter})
   that $\maS_j := \widetilde{\maS}_{j-1} = (\emptyset, \maS^{(j+1)})$, 
   where $\maS^{(j+1)} = (\maS^{(j)})'$ and $\maS_0 = \maS^{(0)} 
   = \maS$. Assume $P_0, P_1, \ldots, P_j$ were chosen. Then we choose $P_{j+1} \in \maS    
   \smallsetminus \{P_0, P_1, \ldots, P_j\}$, $j+1 \le k$, to correspond to a minimal element of 
   $\maS^{(j+1)}$ for inclusion. This satisfies the desired condition that
   the lift of $P_{j+1}$ in $\maS^{(j+1)}$ be a \psbmanifold.
\end{enumerate}
\end{remark}

\begin{proposition}\label{prop.ex.ao}
We use the notation introduced in Remark \ref{rem.ex.ao}. First, the procedure
of that remark is well-defined in the sense that it yields an order
$(P_0, P_1, \ldots, P_k)$ on the elements of $\maS$. Most
importantly, the resulting order is admissible. Let 
$\emptyset \neq Y \in \maS$. With suitable choices in the procedure
of that remark, we obtain an admissible
order on $\maS$ such that all elements that precede $Y$ in this
order are contained in $Y$.
\end{proposition}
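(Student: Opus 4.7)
The plan is to prove the first two assertions (well-definedness and admissibility of the procedure) by induction on $|\maS|$, and then to deduce the third assertion by specializing the minimal-element choice at each step. The key invariant sustaining the induction is that, by \cite[Theorem~2.8]{ACN}, blowing up a clean semilattice of closed \psbmanifolds\ along one of its minimal elements produces a pull-back that is again a clean semilattice of closed \psbmanifolds\ in the blown-up space; this is exactly what guarantees that the next choice can be made.

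First, I would verify well-definedness. The base case $|\maS|=1$ (so $\maS=\{\emptyset\}$) is trivial. For the inductive step, $P_0 = \emptyset$ is forced; then a minimal element $P_1$ of $\maS \smallsetminus \{\emptyset\}$ for inclusion exists by finiteness, and it is a closed \psbmanifold\ by hypothesis. Applying \cite[Theorem~2.8]{ACN} as in Remark~\ref{rem.ex.ao}, the tuple $\widetilde\maS$ is a finite clean semilattice of closed \psbmanifolds\ in $[M:P_1]$ with strictly fewer non-empty elements; the induction hypothesis then yields an admissible order on $\widetilde\maS$, and concatenating with $(\emptyset, P_1)$ produces the claimed order on $\maS$. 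Admissibility is checked simultaneously: minimality of the chosen element at each stage ensures there is no later element whose lift is strictly contained in it, which is condition~(ii) of Definition~\ref{def.admissible}, while condition~(iii) follows from the induction hypothesis applied to $\widetilde\maS$ in $[M:P_1]$.

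For the last claim, I would specialize the procedure as follows. Set $\maS_{\leq Y} \ede \{P \in \maS \mid P \subseteq Y\}$; this is a finite clean sub-semilattice of $\maS$ containing $\emptyset$ and having $Y$ as its unique maximum. At each step of the algorithm, as long as $\maS_{\leq Y}$ still has unprocessed members, I would require $P_{j+1}$ to be chosen from $\maS_{\leq Y}$, specifically as an element whose lift is minimal in the current pull-back of $\maS_{\leq Y}$. Since $Y$ is the maximum of $\maS_{\leq Y}$ and lifts preserve inclusion (i.e.\ $\beta^*(A) \subseteq \beta^*(B)$ whenever $A \subseteq B$), the lift of $Y$ is never minimal in the pull-back of $\maS_{\leq Y}$ until $Y$ is the only element left; so $Y$ is placed last in the $\maS_{\leq Y}$-block. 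Once $\maS_{\leq Y}$ is exhausted, the first part of the proof applies to the pull-back of the remaining elements and yields an admissible continuation; by construction every entry preceding $Y$ in the combined order lies in $\maS_{\leq Y}$, hence is contained in $Y$.

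The main obstacle is verifying that this constrained choice still produces an admissible order on the \emph{full} $\maS$, i.e.\ that a chosen $R \in \maS_{\leq Y}$ is still minimal in the current pull-back of the full unprocessed part of $\maS$. I would argue as follows. Let $U_T$ denote the union of the previously processed sets, all of which lie in $\maS_{\leq Y}$, so that $U_T \subseteq Y$. Suppose $Q \in \maS \smallsetminus \maS_{\leq Y}$ is unprocessed and that the pull-back of $Q$ were strictly contained in the pull-back of $R$ in the current iterated blow-up. Then on the dense open subset $M \smallsetminus U_T$, where the blow-down map is the identity, one would obtain $Q \smallsetminus U_T \subseteq R$, and hence $Q \subseteq R \cup U_T \subseteq Y$, contradicting $Q \not\subseteq Y$. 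No such $Q$ can exist, so admissibility is preserved at every step of the constrained procedure, completing the proof.
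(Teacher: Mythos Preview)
Your proof is correct and follows essentially the same approach as the paper: induction on $|\maS|$ using \cite[Theorem~2.8]{ACN} to propagate the clean-semilattice property, and then specializing the minimal choice to elements contained in $Y$ for the last claim. Your write-up is considerably more detailed than the paper's---in particular, your density argument showing that a minimal lift from $\maS_{\leq Y}$ is automatically minimal in the full pull-back is a point the paper leaves implicit.
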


\begin{proof}
The procedure of Remark \ref{rem.ex.ao} is well-defined since, at each step, the 
resulting tuples $\widetilde \maS := (\emptyset, \maS'')$ and 
$(\emptyset, \maS^{(j+1)}) := \widetilde {(\emptyset, \maS^{(j)})}$, $k \ge 1$, 
are clean semilattices, by \cite{ACN}, so the procedure stops only after we
have chosen $P_k$ (i.e. after having ordered all elements). 
(See Remark \ref{rem.repetitions2}\eqref{item.iter} 
for the notation.) The resulting order 
on $\maS$ is admissible, by the definition of an admissible order and by induction on 
the number of elements of $\maS$. Finally, given $Y \in \maS$, we are going to successively 
choose the sets $P_k$ to be either $Y$ or a set contained in $Y$, if possible (that is, unless 
$Y$ has already been chosen). This procedure will yield the desired
admissible order on $\maS$.
\end{proof}

\subsection{A blow-up point of view on the spherical compactification}\label{subsec.blow-up-spherical}
This subsection is not needed for the main results of the article. We will show the following 
proposition which might add a helpful perspective for future research. In particular, it shows 
that if we blow-up at infinity the one-point compactification $X_\infty$ of a finite-dimensional 
vector space $X$ obtained by stereographic projection, then we obtain a compactification 
canonically diffeomorphic to the spherical compactification $\ol X$.

For introducing the one-point compactification $X_\infty$, we 
use the scalar product $\<\argu,\argu\>$ on $X$, and we consider the (unit) sphere 
$\SS_{\RR\times X}$ in $\RR\times X$. We define the \emph{south pole} 
$S\coloneqq (-1,0)\in \SS_{\RR\times X}$. The \emph{stereographic projection} is the map
\begin{eqnarray*}
  \sigma\colon\SS_{\RR\times X} \setminus\{S\}&\to& X,\\
  \RR\times X\ni  \begin{pmatrix}\cos\theta\\\sin(\theta)\cdot y \end{pmatrix}&\mapsto& \tan \Bigl(\frac\theta2\Bigr)\,\cdot\, y,\qquad \theta\in (0,\pi],\quad y\in \SS_{X} \,.
\end{eqnarray*}
After formally defining $\infty\coloneqq\sigma(S)$, there is a unique topology and a unique 
smooth structure on $X_\infty\coloneqq X\cup\{\infty\}$ such that $\sigma$ is a diffeomorphism. 
This manifold $X_\infty$ is called the \emph{one-point-compactification} of X.
\begin{figure}\label{fig.compactific}
   \def\circlethick{.4mm}
   \def\strahlthick{.25mm}
   \def\strahlcoleins{red} 
   \def\stereocol{darkblue} 
   \def\sphericol{darkgreen} 
\begin{center}
   \begin{tikzpicture}[scale=2]
      \node[right] (X) at (2,0) {$X$}; 
      \node[above] (R) at (0,1.2) {$\RR$}; 
      %\draw[fill=black] (0,0) circle (1);
      \draw[line width=\circlethick,\stereocol] (0,0) circle (1);
      \node[left,\stereocol] (R) at (-.53,.9) {$\SS_{\RR\times X}$}; 
      \draw[black,fill=black] (0,-1) circle (.02);
      \node[below,black] (R) at (-.1,-1) {$S$}; 
      \draw[line width=\circlethick,\sphericol] (1,-1) arc (0:180:1);
      \draw[line width=\circlethick] (-2,0) -- (2,0);
      \draw[fill=gray!50] (0,-1.2) -- (0,1.2);
      \draw[\strahlcoleins,line width=\strahlthick] (0,-1) -- (1.1,1.2);
      \draw[\strahlcoleins,line width=\strahlthick] (0,-.6) arc (90:62.5:.4);
      \node[\strahlcoleins] (winkeleinsa) at (.06,-.7) {$\scriptscriptstyle\alpha$}; 
      \draw[\strahlcoleins,line width=\strahlthick] (.39,.28) arc (210:242.5:.4);
      \node[\strahlcoleins] (winkeleinsb) at (.543,.28) {$\scriptscriptstyle\alpha$}; 
      \draw[\strahlcoleins!40,line width=\strahlthick] (0,0) -- (1.2,.9);
      \draw[\strahlcoleins!40,line width=\strahlthick] (0,.4) arc (90:36:.4);
      \node[\strahlcoleins!40] (winkeleinsc) at (.1,.26) {$\scriptscriptstyle2\alpha$}; 
      \draw[\strahlcoleins,fill=\strahlcoleins] (.5,0) circle (.025);
      \draw[\stereocol,fill=\stereocol] (.8,.6) circle (.025);
      \draw[\sphericol,fill=\sphericol] (.45,-.1) circle (.025);
   \end{tikzpicture}
\end{center}
\caption{One-point and disk compactification. The stereographic projection $\sigma$ maps the \textcolor{\stereocol}{blue dot} to the \textcolor{\strahlcoleins}{red dot}, and the map $\Theta_X$ maps the \textcolor{\strahlcoleins}{red dot} to the \textcolor{\sphericol}{green dot}.}
\end{figure}
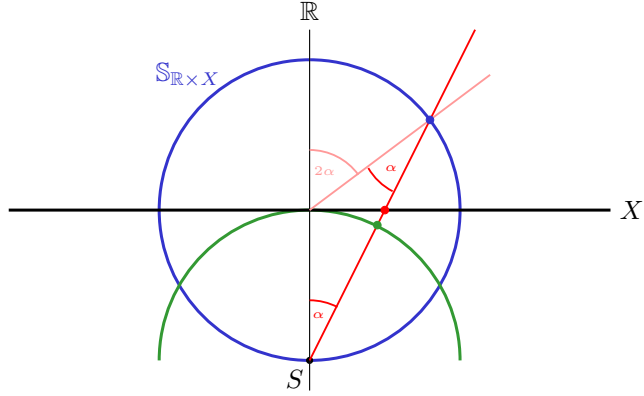

We also consider a new map $\Theta_X$, which is
a slightly modified version of the map $\Theta_n$ from \eqref{eq.def.theta}. 
For that purpose, let
\begin{equation*}%\label{eq.def.theta.mod}
\begin{gathered}
\SS_+':= \bigl\{x +S\mid x\in \SS_{[0,\infty)\times X}\bigr\} = \bigl\{x -(1,0)\mid x=(x^0,x'),\; \|x\|=1,\; x^0\geq 0\bigr\}\,,\\
\Theta_X:\ol X\to \SS_+',\qquad \begin{cases}
\ \Theta_X(x) \ede \frac{1}{\sqrt{1+\<x,x\>}} (0,x) \in \SS_+' 
& \mbox{ if } x \in X \,, \\
\ \Theta_X(\RR_+v) \ede \frac{1}{\|v\|}(-1,v) \in \SS_+'  
& \mbox{ if } \RR_+v \in \mathbb{S}_X\,.
\end{cases}
\end{gathered}
\end{equation*}

We write $\SS_{X} \coloneqq \ol X\setminus X$ for the sphere at infinity of $X$,
as usual.

\begin{lemma}
Let $\Psi\colon\ol{X}\to X_\infty$ be the map with $\Psi|_X=\id_X$ and $\Psi(\SS_{X})
= \{\infty\}$. This is a smooth map from the spherical compactification $\ol X$ to the one-point-compactification $X_\infty$, extending  the identity $\id_X$. There is a diffeomorphism $\Phi:\ol X\to \bigl[X_\infty:\{\infty\}\bigr]$ extending $\id_X$, and thus $\Psi\circ \Phi^{-1}$ is the blow-down 
map $\beta_{X_\infty,\{\infty\}}$.
\end{lemma}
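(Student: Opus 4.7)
The plan is to work locally around $\SS_X$ in $\ol X$ and around $\infty$ in $X_\infty$, using compatible ``radial'' charts on both sides. On the dense open subset $X \subset \ol X$ the map $\Psi$ is the identity, so only its behaviour near the sphere at infinity is in question; and both $\Psi$ and the candidate diffeomorphism $\Phi$ will extend $\id_X$, so everything reduces to understanding how the two compactifications attach points at infinity to $X$ and how the blow-up at $\infty$ re-opens that attachment.

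Concretely, I would use on $\ol X$ the polar chart $(r, \omega) \in [0, \varepsilon) \times V$, with $V$ open in the unit sphere of $X$, defined by $x = \omega / r$ for $r > 0$ and $r = 0$ corresponding to $\RR_+ \omega \in \SS_X$ (so $r = 1/\|x\|$ is a boundary defining function of $\SS_X$). On the target, near $\infty$ I would use the inversion chart $y = x / \|x\|^2$, in which $y = 0$ corresponds to $\infty$; this is essentially $\sigma^{-1}$ composed with a linear chart of $\SS_{\RR\times X}$ centred at $S$. In these two charts $\Psi$ takes the form $(r, \omega) \mapsto r \omega$, which is smooth and sends $\{r = 0\}$ to $\{y = 0\}$, proving smoothness of $\Psi$. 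For $\Phi$, I would observe that $\infty$ is an interior point of $X_\infty$, so $N^{X_\infty}_+ \{\infty\} = T_\infty X_\infty$ and the boundary of $[X_\infty : \{\infty\}]$ is $\SS(T_\infty X_\infty)$. The stereographic-projection definition of $X_\infty$ yields a canonical isomorphism $T_\infty X_\infty \cong T_S \SS_{\RR\times X} \cong \{0\} \times X \cong X$, and hence a canonical identification $\SS_X \cong \SS(T_\infty X_\infty)$; I then define $\Phi$ to be $\id_X$ on $X$ and this identification on $\SS_X$.

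To see that $\Phi$ is a diffeomorphism, use the standard polar chart on the blow-up: $(\rho, \theta) \in [0, \varepsilon) \times V$ corresponding to the point with inversion coordinate $y = \rho \theta$. In this chart and the polar chart on $\ol X$, $\Phi$ is simply the identity $(r, \omega) \mapsto (r, \omega)$, hence a diffeomorphism. The identity $\Psi \circ \Phi^{-1} = \beta_{X_\infty,\{\infty\}}$ then reduces to a formal check: both maps equal $\id_X$ on the dense set $X$ and both send the new boundary $\SS(T_\infty X_\infty)$ to $\{\infty\}$, so continuity forces equality. The main bookkeeping obstacle is the careful matching of the three charts (polar at infinity in $\ol X$, inversion at $\infty$ in $X_\infty$, and polar coming from the blow-up construction) so that $\Psi$ and $\Phi$ take the clean forms $(r, \omega) \mapsto r\omega$ and $(r, \omega) \mapsto (r, \omega)$ respectively; once this alignment is in place, the rest of the argument is automatic.
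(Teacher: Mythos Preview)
Your argument is correct and follows essentially the same strategy as the paper's proof: both reduce everything to an explicit local computation near infinity, showing that in suitable radial coordinates the map $\Psi$ becomes the polar-coordinate map $(r,\omega)\mapsto r\omega$ and hence lifts to a diffeomorphism onto the blow-up. The only difference is cosmetic: the paper first transfers the picture via the global diffeomorphisms $\Theta_X$ and $\sigma$ to the half-sphere $\SS_+'$ and the sphere $\SS_{\RR\times X}$ and then writes down the angular formula $\psi\bigl((\cos\alpha-1,(\sin\alpha)y)\bigr)=(\cos2\alpha,(\sin2\alpha)y)$, whereas you work directly on $\ol X$ and $X_\infty$ with the polar chart $x=\omega/r$ and the inversion chart $y=x/\|x\|^2$; your route is slightly more streamlined, while the paper's gives a single global formula.
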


\begin{proof} 
We define $\psi\coloneqq \sigma^{-1}\circ \Theta_X^{-1}\colon  \SS_+'\to \SS_{\RR\times X}$. 
Then $\Psi=\sigma\circ \psi \circ \Theta_X$. By construction, $\sigma$ and $\Theta_X$ are 
diffeomorphisms. To prove the lemma, one thus has to show that $\psi$ is that there is a 
diffeomorphism $\phi\colon \SS_+'\to \bigl[\SS_{\RR\times X}:\{S\}\bigr]$ such that  
$\psi\circ \phi^{-1}=\beta_{\SS_{\RR\times X},\{S\}}$. The construction of such a $\phi$ 
is an easy exercise, if one uses the following formula for $\psi$, which is apparent in 
view of Figure~\ref{fig.compactific}, setting $\theta\coloneqq 2\alpha$ above.   
\begin{equation*}
 \psi\Bigl(\bigl((\cos \alpha)-1,(\sin \alpha) y\bigr) \Bigr) 
 \seq \bigl((\cos 2\alpha), (\sin 2 \alpha) y\bigr), \qquad \alpha\in [0,\pi/2],\; Y\in \SS_X \,.
\end{equation*}
The function $\cos \alpha$ is a boundary defining function for $\SS_+'$ and a smoothed distance 
function for $S$ in $\SS_{\RR\times X}$. This completes the proof.
\end{proof}

\section{Distance functions and Sobolev spaces for blown-up spaces}\label{sec3}

We now investigate how several geometric quantities (metrics, distance functions, 
Sobolev spaces, natural differential operators, ...) change when performing a 
blow-up. The manifolds~$M$ we consider 
have a complete metric in the interior $\interior{M}$ described precisely in terms of 
Lie manifolds, Definition~\ref{def.comp.metric}.
One important case will be that when $\interior M$ is a Euclidean vector space $X$ with 
its spherical compactification $M=\oX$, which was described in the 
Introduction. Some other times, $M$ will be a blow-up of $\oX$.

We distinguish here the case of a blow-up along a submanifold \emph{contained 
in the boundary} (the easy case) and the case of a manifold \emph{not contained in 
the boundary} (the difficult case, but treated already in \cite{ACN} and in other 
papers). In the first case, the \emph{boundary case,} we will additionally require 
that our vector fields are tangent to the submanifold contained in the boundary. In 
that case, the metric in the interior remains the same, only the compactification 
is altered, which makes many investigations much easier. In the second case, the 
\emph{interior case,} the metric will be changed conformally 
by multiplication with $r^{-2}$, where~$r$ is a ``smoothed distance function,'' 
(a concept that will be defined in this section). 
The main technical result of this section is the behavior of ``smoothed
distance functions'' when performing iterated blow-ups, Proposition 
\ref{prop.smoothness}. We also recall in this section the needed regularity 
result on Lie manifolds from \cite{sobolev}.

\subsection{Smoothed distance function to a \psbmanifold}\label{ssec.s.dist}
Let $M$ denote a manifold with corners, as before. We assume that 
$\maP \subset 2^M$ -- equipped with a suitable ordering -- is a  blow-up-suitable 
$k$-tuple of subsets of $M$ (so that $[M: \maP]$ is defined). As always, we shall 
write
\begin{equation*}
   \unionP \ede \bigcup\limits_{P \in \maP} P\,.
\end{equation*}
We shall need the following simple concept of ``equivalent functions''
on $[M: \maP]$.

\begin{definition}\label{def.equiv.funct} Let $M$ be a manifold with
corners, let $\maP$ be a blow-up-suitable $k$-tuple
of \psubmanifolds\ of $M$ (so, in particular, $[M: \maP]$ is defined), and 
let $k\in \NN\cup\{0,\infty\}$. Assume that we have two continuous functions 
$f_i : M \to [0,\infty)$, $i = 0, 1$, such that 
the functions $f_i$ are $\maC^k$ on $M \smallsetminus \bigcup \maP$ and nowhere 
vanishing. We shall say that $f_0$ and $f_1$ are \emph{$\maC^k$-equivalent 
(on $[M: \maP]$)} or that \emph{$f_0$ is $\maC^k$-equivalent (on $[M: \maP]$) 
to $f_1$} if
\begin{equation*}
  \frac{f_1\stelle{M \smallsetminus \bigcup \maP}}
  {f_0\stelle{M \smallsetminus \bigcup \maP}}
\end{equation*}
extends to a nowhere vanishing  $\maC^k$ function on $[M: \maP]$.
In the case $k=\infty$, we shall say that $f_0$ is
\emph{smoothly equivalent} or just \emph{equivalent} to $f_1$ and write $f_0 \sim f_1$. 
In the case $k=0$, we say $f_0$ and $f_1$ are \emph{continuously equivalent.}
\end{definition}

The family $\maP$ as well as the space $[M: \maP]$, which we assumed to be
defined, will sometimes be understood, so we may occasionally not mention them.
Recall that two functions $f_1, f_2: U \to [0, \infty)$
are called \emph{Lipschitz equivalent} if there exists $C > 0$ such that
\begin{equation}\label{eq.Lip.equiv}
   C^{-1} f_1 \le f_2 \le C f_1 \,.
\end{equation}
Two functions $f_1, f_2: M \to [0, \infty)$ are called 
\emph{locally Lipschitz equivalent} if any point in~$M$ has an open neighborhood 
$U$ such that $f_1|_U$ and $f_2|_U$ are Lipschitz equivalent.

\begin{remark} Let us record a few easy consequences of the definition:
  \begin{enumerate}[(i)]
    
\item Clearly, the $\maC^k$-equivalence of functions on $[M: \maP]$ is an equivalence
  relation.
\item Since $M \smallsetminus \unionP$ is dense in $[M: \maP]$
(Proposition~\ref{prop.density}) the extension of the quotient
$\dfrac{f_1 \vert_{M \smallsetminus \bigcup \maP}}{f_0 \vert_{M \smallsetminus 
    \bigcup \maP}}$ to $[M: \maP]$ by continuity is unique (when it exists).
\item Obviously $\maC^k$-equivalence implies $\maC^\ell$-equivalence for $\ell\leq k$.  
\item If two functions $f_i : M \to [0,\infty)$ as above are continuously 
equivalent, then they are locally Lipschitz equivalent.
As a consequence, for $M$ compact, $\maC^0$-equivalence implies Lipschitz equivalence. 
\item If $H$ is a boundary hyperface of $M$ with at least one defining
function, then any two defining functions of~$H$ will be equivalent on $[M: H] \simeq M$.
\end{enumerate}
\end{remark}

Let us now discuss the type of metrics that we will use.
Let $M$ be a manifold with corners. In the following, we shall make use of two 
kinds of metrics on $M$. The first kind of metrics will consist of 
what we are calling ``true 
Riemannian metrics'' on $M$. A \emph{true Riemannian metric} on $M$ is, by definition, 
nothing but a smooth, fiberwise positive definite symmetric section of 
$T^*M\otimes T^*M\to M$ (the usual kind of metrics on $M$). The second kind of metrics will
consist of the so-called \emph{compatible metrics,} which will be defined below, Definition
\ref{def.comp.metric}, and whose definition requires some additional data (a structural
Lie algebra of vector fields on~$M$). Compatible metrics are Riemannian metrics 
on $\interior{M} \coloneqq  M\smallsetminus \partial M$ and do not extend to a true metric on $M$, 
unless $\partial M=\emptyset$.

Let $P$ be a closed \psubmanifold\ of $M$. Recall that $\beta_{M, P} : [M: P] \to M$ 
denotes the blow-down map (see Definition \ref{def.blow-up}).

\begin{definition} \label{def.r_X}
Let $M$ be a manifold with corners and $P \subset M$ 
be a closed \psubmanifold. A function $r_P : M \to [0, \infty)$ will be called 
\emph{a smoothed distance function to $P$ (in $M$)} if its lift 
$\beta_{M, P}^*(r_P) \coloneqq  r_P \circ \beta_{M, P}$ is a boundary defining
function (Definition \ref{def.b.def.f}) for $\SS N_+^M P = \beta_{M, P}^{-1}(P)$, 
the hyperface (or union of hyperfaces) of $[M: P]$ obtained by blowing up $M$ along~$P$. 
\end{definition}

We also remark that a smoothed distance function to $P$ in $M$ is continuous since 
it lifts to a continuous function on $[M: P]$ and $M$ has the quotient topology.
The following remark explains the name ``smoothed distance function to $P$'' in 
$M$ for $r_P$.

\begin{remark} \label{rem.def.r_X}
The function $f : M \to [0, 1]$ is a smoothed distance function to $P$ in 
$M$ if, and only if, it satisfies the following conditions:
\begin{enumerate}[(i)]
 \item\label{cond.i} it is continuous on $M$ and smooth on $M\setminus P$, 
 \item $f^{-1}(\{0\})=P$, and most importantly,
 \item\label{cond.iii} there is a neighborhood $V$ of $P$ such that 
 $f$ is (smoothly) equivalent on $[V: P]$ to 
 the distance to $P$ with respect to some suitable true metric on $M$.
\end{enumerate}
We omit details here, as this fact 
will not be used in our article.
\end{remark}

\begin{remark} \label{rem.r_X.one}
In general, a smoothed distance to~$P$ in $M$ function $r_P$ will \emph{not}
be smooth on $M$; in fact, smoothness on $M$ will only hold if $P$ is empty, 
or a union of connected components of $M$. However, it will be continuous on 
$M$ and smooth on $[M: P]$. This is one of the main reasons for considering 
the blow-up $[M:P]$. 
\end{remark}

\begin{remark}\label{rem.r_X.two} Given a closed \psubmanifold\ $P \subset M$,
  we see that a smoothed distance to $P$ in $M$ (Definition \ref{def.r_X})
is \emph{not uniquely determined}.
However, any two such smoothed distance functions are equivalent
on $[M: P]$, and hence they are equivalent on any other iterated blow-up $[M: \maP]$ 
as well, as long as $P \in \maP$. This is, in fact, our motivation for introducing 
the notion of equivalence 
of such functions. For this reason, we will sometimes talk about \emph{the} smoothed 
distance to $P$, although we will really mean the equivalence class of the
smoothed distances to $P$ in $M$.
\end{remark}

\begin{remark}
In the case that $M=\overline{X}$ is the spherical compactificaction of some Euclidean 
vector space $X$ as explained in Subsection~\ref{subsec.basic-notions} and if $Y$ is a 
linear subspace of $X$, we may consider the Euclidean distance function 
$d_Y := \min\bigl\{\|x-y\|\mid y\in Y\bigr\}$ introduced in Equation 
\eqref{eq.def.dist}. If $r_{\overline{Y}}: \overline{X}\to [0,\infty)$ is a 
\emph{smoothed} distance function to 
$\overline{Y}$, then the function $r_{\overline{Y}}$ is clearly not Lipschitz 
equivalent to~$d_Y$, as the first one is bounded and the second one is not.  
One can also show that $r_{\overline{Y}}$ is also not bi-Lipschitz 
to $\arctan \circ d_Y$, as these two functions behave differently close to $\SS_X$.
The precise behavior of $r_{\overline{Y}}$ is that it is continuously equivalent 
and Lipschitz equivalent to $x\mapsto \arctan\bigl((\|x\|^2+1)^{-1/2}d_Y(x)\bigr)$.
Furthermore, $r_{\overline{Y}}$ is continuously equivalent and thus locally Lipschitz 
equivalent to $x \mapsto (\|x\|^2+1)^{-1/2}d_Y(x)$.
\end{remark}

We now consider as in \cite{AMN1} and \cite{Kottke-Lin} pairs $(P, Q)$ of closed \psubmanifolds\
of $M$. As in those papers, we need to consider the cases $Q \subset P$ and $Q \cap P = 
\emptyset$. We begin with the first case.

\begin{lemma} \label{lemma.Y:P}
Let $P \subset M$ be a closed \psubmanifold{} of the manifold with corners $M$, let  $Q \subset P$ 
be a closed \psubmanifold\ of $P$, and let $r_P$ (respectively, $r_Q$) be a smoothed distance 
function to~$P$ (respectively, to~$Q$) in $M$.
Then
\begin{equation*}
   (r_Q^{-1} r_P)\stelle{M \smallsetminus P}
\end{equation*}
extends to a smoothed distance function to $\beta_{M, Q}^*(P) \coloneqq  
\overline{\beta_{M, Q}^{-1}(P \smallsetminus Q)}$ in $[M: Q]$.
\end{lemma}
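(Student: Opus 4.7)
The plan is to reduce the claim to a local coordinate computation near $Q$, handle the complement $M\setminus Q$ separately, and then patch using the equivalence relation (Definition~\ref{def.equiv.funct}) tracked through the factorization map $\zeta_{M, Q, P} : [M: Q, P] \to [M: P]$ of Lemma~\ref{lemma.cancellation1}. Throughout, let $\tilde P \coloneqq \beta_{M, Q}^*(P)$ and $\tilde Q \coloneqq \beta_{M, Q}^{-1}(Q)$.

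\textbf{Away from $Q$.} First, on $M \setminus Q$, the blow-down $\beta_{M, Q}$ is a diffeomorphism onto $[M:Q] \setminus \tilde Q$, the function $r_Q$ is smooth and strictly positive, and $r_P|_{M \setminus Q}$ is a smoothed distance function to $P \setminus Q = \tilde P \setminus \tilde Q$ in $M \setminus Q$. Hence $r_Q^{-1} r_P$ is a smoothed distance function to $\tilde P$ on the open subset $[M:Q] \setminus \tilde Q$, with no extra work required.

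\textbf{Near $Q$.} Since $Q \subset P$ are nested closed \psubmanifolds{} of $M$, every point of $Q$ admits a chart $U \subset M$ (modeled on $\RR^n_k$) with coordinates $(x_1, \ldots, x_n)$ in which $Q \cap U = \{x_1 = \cdots = x_q = 0\}$ and $P \cap U = \{x_1 = \cdots = x_p = 0\}$ for some $p \le q$. By the characterization in Remark~\ref{rem.def.r_X}, one has, locally,
\begin{equation*}
    r_P|_U \,\sim\, \sqrt{x_1^2 + \cdots + x_p^2}, \qquad r_Q|_U \,\sim\, \sqrt{x_1^2 + \cdots + x_q^2} \,.
\end{equation*}
Passing to polar coordinates $x_i = r\theta_i$ ($i = 1, \ldots, q$) on $[U: Q \cap U]$, with $r$ a boundary defining function for the new front face, $\tilde P$ is locally cut out by $\theta_1 = \cdots = \theta_p = 0$ and a direct computation yields
\begin{equation*}
    \frac{r_P}{r_Q}\Big|_{U \setminus P} \,\sim\, \sqrt{\theta_1^2 + \cdots + \theta_p^2} \,.
\end{equation*}
The right-hand side is continuous on $[U: Q \cap U]$, smooth off $\tilde P$, vanishes precisely on $\tilde P$, and a further polar substitution on $(\theta_1, \ldots, \theta_p)$ implementing the blow-up of $\tilde P$ turns it into a boundary defining function for the new hyperface $\beta_{[U:Q], \tilde P}^{-1}(\tilde P)$; this verifies the claim locally.

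\textbf{Patching and obstacle.} Replacing $r_P$ by an equivalent $r_P \cdot g_P$ with $g_P$ smooth positive on $[M:P]$ multiplies $r_Q^{-1} r_P$ by $\zeta_{M, Q, P}^* g_P$, which is smooth positive on $[M: Q, P]$; similarly for $r_Q$ via pullback through $\beta_{[M:Q], \tilde P}$. Hence the conclusion is invariant under the equivalences, so the local model around each point of $Q$ together with the observation on $M \setminus Q$ pastes into a globally defined smoothed distance function to $\tilde P$ on $[M:Q]$. The main technical point will be the local polar-coordinate computation: one must check both that the division by $r_Q$ kills the apparent singularity of $r_P$ at the front face $\tilde Q$ (so that the ratio extends continuously across $\tilde Q$) and that, after blowing up $\tilde P$, the extended ratio is genuinely a boundary defining function in the sense of Definition~\ref{def.r_X}, rather than merely vanishing on the correct set.
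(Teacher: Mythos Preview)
Your proposal is correct and follows essentially the same approach as the paper's proof: both split into the region away from $Q$ (where $r_Q>0$ and the factorization map $\zeta_{M,Q,P}$ of Lemma~\ref{lemma.cancellation1} reduces the claim to $r_P$ being a defining function on $[M:P]$) and a local polar-coordinate computation near $Q$, with the equivalence of smoothed distance functions handled via pullback through $\zeta_{M,Q,P}$. The paper's local model $M=\RR^k_\ell\times\RR^{k'}_{\ell'}\times\RR^{k''}_{\ell''}$ with $P=\RR^k_\ell\times\RR^{k'}_{\ell'}\times\{0\}$ and $Q=\RR^k_\ell\times\{0\}\times\{0\}$ is just a relabeling of your $\{x_1=\cdots=x_p=0\}\supset\{x_1=\cdots=x_q=0\}$, and its formula $\hat r=\sin\circ\,\theta=r_P/r_Q$ is your $\sqrt{\theta_1^2+\cdots+\theta_p^2}$ in different notation; the only point the paper makes slightly more explicit is the passage from the boundaryless case $\ell'=\ell''=0$ to general corners by replacing spheres with orthants $\SS^{k'+k''-1}\cap(\RR^{k'}_{\ell'}\times\RR^{k''}_{\ell''})$, which you should spell out when you say ``modeled on $\RR^n_k$''.
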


Of course, in the above lemma, we have  $\beta_{M, Q}^*(P) \coloneqq  
\overline{\beta_{M, Q}^{-1}(P \smallsetminus Q)} \simeq [P:Q]$ see
Proposition \ref{prop.beta.m1}. Here $\simeq$ denotes the existence of a 
diffeomorphism extending the identity on $P \setminus Q$. Also, for the simplicity of 
the presentation, we may and will assume  in the following that $\beta_{M, Q}^*(P) = [P:Q]$.

\begin{remark}
Note that the  assumptions in the lemma allow the case when, for all 
$q\in Q$, we have $\dim_q (Q) = \dim_q (P)$. In this case  $\beta_{M, Q}^*(P)=P\setminus Q$ 
is a union of some connected components of~$P$, and thus for any $q\in Q$ we have
$(\beta_{M,Q})^{-1}(q)\cap [P:Q]=\emptyset$. If $U$ is an open neighborhood of $q$, 
then any positive function defined on $\beta_{M,Q}^{-1} (U)$ is thus a smoothed distance 
function to $[P:Q]\cap \beta_{M,Q}^{-1} (U)=\emptyset$.

On the other hand, in this special case, $r_Q^{-1}r_P$, defined on $U\setminus\{q\}$, 
where $U$ is an open 
neighborhood of $q$ with $U\cap P=Q$, extends to a positive smooth function on 
$\beta_{[M:Q], [P: Q]}^*(\beta_{M,Q}^* (U))=\beta_{M,Q}^* (U)$. The latter statement 
follows as $r_P|U$ is a smooth distance function to $Q$ in $U$ and using 
Remark~\ref{rem.r_X.one}. These argument provide a proof of the lemma  in this exceptional case.

This special case is also included in the following proof, if we use the convention $\SS^{-1}=\emptyset$
(here $\SS^{-1}$ is the unit sphere in $\RR^0$).
\end{remark}

\begin{proof}[Proof of Lemma \ref{lemma.Y:P}]
%\green{According to our definition of the blow-up, Definition \ref{def.blow-up}, 
%the hypothesis $Q \subset P$ gives that $[P: Q]$ 
%identifies canonically 
%with $\beta_{M,Q}^*(P)\subset [M:Q]$, the
%pull-back of $P$ in $M$, see Proposition \ref{prop.beta.m1}. (We recall that 
%$\beta_{M,Q}^*(P)$ is defined as the closure of $P \smallsetminus Q$ in $[M: Q]$.)}
%
We need to prove that the function $r_Q^{-1}r_P : M\setminus P 
\to (0, \infty)$, extends to a smooth function on
\begin{equation*}
    [M: Q, P] \ede \bigl[[M:Q]: [P: Q]\bigr]
\end{equation*}    
and that this extension of $r_Q^{-1} r_P$ is a defining function
for the hyperface  
\begin{equation*}
     P^\prime \ede \SS\bigl(N^{[M: Q]}_+[P:Q]\bigr)  
      \seq \beta_{[M:Q], [P:Q]}^{-1}([P: Q])
\end{equation*}
of $[M: Q, P]$. Let 
\begin{equation*}
    \beta \ede \beta_{M, Q, P} \ede
    \beta_{M,Q}  \circ\beta_{[M:Q], [P: Q]} : 
    [M: Q, P] \to M
\end{equation*}
be the blow-down map and $z\in P^\prime$ 
(see Definition~\ref{def.iterated.bu}). Then $\beta (z) \in P$. 

Recall that by the definition of $r_P$, we have that 
$r_P \circ \beta_{M, P}$ is a boundary defining function 
$r_P \circ \beta_{M, P} \colon[M:P]\to [0,\infty)$
of $\SS(N^M_+P)
 = \beta_{M,P}^{-1}(P)\subset [M:P]$ as a hyperface of $[M: P]$.
The map $[M: Q, P] \to [M: P]$, defined by Lemma \ref{lemma.cancellation1}
is a diffeomorphism outside the preimage of $Q$. Thus, 
if $\beta(z) \notin Q$, the function $r_Q^{-1}r_P:M\setminus 
P = [M:Q]\setminus [P: Q] \to [0,\infty)$ extends locally -- i.e.\ in a neighborhood 
of $z$ -- to a defining function for $P^\prime\subset [M:Q,P]$, since 
$r_Q > 0$ at and near $z$ and since $r_P$ is a 
defining function of $\SS(N^M_P)$, the pull-back of $P$ 
in $[M: P]$, as we have just explained.
  
On the other hand, if $\beta(z) \in Q$, we use (again) the fact that our
problem is local. Thus by choosing a suitable chart around $\beta(z)$,
we can reduce the lemma to the special case
\begin{equation*}
\begin{cases}
  \ M  = \RR_{\ell}^{k} \times \RR_{\ell^\prime}^{k^\prime} \times 
  \RR_{\ell^{\prime\prime}}^{k^{\prime\prime}} \ni (x,x',x'') \\
  \ P  = \RR_{\ell}^{k} \times \RR_{\ell^\prime}^{k^\prime} \times \{0\} \ni (x,x',0)\\
  \ Q  = \RR_{\ell}^k \times \{0\} \times \{0\} \ni (x,0, 0)\\
  \ \beta(z) = (0, 0, 0) \,.
\end{cases}
\end{equation*}
Assume first
that $\ell^\prime = \ell^{\prime\prime} = 0$.
In this very special case we have
\begin{eqnarray*}
     [M:Q] &=& \RR_\ell^k\times \SS^{k'+k''-1} \times [0,\infty)\ni (x,\xi,r),\\{}
     [P:Q] &=& \RR_\ell^k\times \SS^{k'-1} \times [0,\infty),
\end{eqnarray*}
where $r=\sqrt{(x')^2+(x'')^2}$ and $\xi = (x', x'')/r$ away from $Q$. 
Furthermore, the inclusion $[P: Q] \subset [M: Q]$ is
given by the inclusion of $\SS^{k'-1} \subset \SS^{k'+k''-1}$ on the first $k^\prime$
components of $\SS^{k'+k''-1}$ and by the identity on the other factors (that is, on 
$\RR_\ell^k$ and on $[0, \infty)$).
For $\xi \in \SS^{k'+k''-1}$ let 
$\theta(\xi)$ be the length of the shortest geodesic from $\xi$ to $\SS^{k'-1}$, unless $k'=0$.
In the case $k'=0$ we set $\theta\equiv \pi/2$, thus $\sin\circ \theta\equiv 1$.
Then
\begin{equation*}
     \hat r \ede \sin \circ\, \theta\colon  \SS^{k'+k''-1}\to [0,1]
\end{equation*}
is a smoothed distance function to $\SS^{k'-1}$ in $\SS^{k'+k''-1}$, thus 
-- by pull-back -- $\hat r$ is also a smoothed distance function 
to $[P:Q]$ in $[M:Q]$. Smoothed distance functions $r_Q$ and 
$r_P$ (for $Q$ and $P$ in $M$) are then given by
\begin{equation*}
    r_Q (x,\eta,r) \seq \sqrt{\|x'\|^2+\|x''\|^2} \seq r \ \text{ and } \
    r_P \seq \|x''\| \seq r \sin(\theta(\xi))\,.
\end{equation*}
Since, obviously, $\hat r = r_Q^{-1}r_P$, we obtain the desired statement
if $\ell^\prime = \ell^{\prime\prime} = 0$. 
The general case follows by replacing $\SS^{k'-1}$ with $\SS_{\ell'}^{k'-1}$ and 
$\SS^{k'+k''-1}$ with $\SS^{k'+k''-1} \cap (\RR_{\ell'}^{k'} \times \RR_{\ell''}^{k''})$.
\end{proof}

We now turn to the second case, that when $P$ and $Q$ are disjoint.
For later use, we prove a more general statement.

\begin{lemma}\label{lemma.disjoint.sd}
Let $M$ be a compact manifold with corners, let $P \subset M$ be a closed 
\psbmanifold{} and $\maQ = (Q_1, Q_2, \ldots, Q_k)$ be a blow-up-suitable $k$-tuple 
of closed subsets of $M$ \emph{disjoint} from $P$. Let $\beta : [M: \maQ] \to 
M$ be the blow-down map and let $r_P$ be a smoothed distance to $P$
(in $M$). Let $\widehat{P} := \beta^{-1}(P) = \beta^*(P)$. Then
$r_P \circ \beta$ is a smoothed distance function to $\widehat{P}$
in $[M: \maQ]$.
\end{lemma}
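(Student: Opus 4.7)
The plan is to reduce the statement to a local computation in a neighborhood of $P$ by exploiting the disjointness of $P$ from $\unionma{Q}$. Since $P$ and $\unionma{Q}$ are both closed and disjoint (and $M$ is compact), we can choose an open neighborhood $U$ of $P$ in $M$ with $\ol{U}\cap\unionma{Q}=\emptyset$. The key observation is that the iterated blow-up along $\maQ$ leaves this neighborhood untouched: the claim is that $\beta$ restricts to a diffeomorphism $\beta^{-1}(U)\xrightarrow{\sim}U$. This I would prove by induction on $k=|\maQ|$. The case $k=1$ is immediate since $\beta_{M,Q_1}$ is a diffeomorphism over $M\setminus Q_1\supset U$. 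For the inductive step, note that by Definition~\ref{def.iterated.bu} the pull-back family $\maQ'=(\beta_{M,Q_1}^*(Q_i))_{i\ge 2}$ consists of subsets of $\beta_{M,Q_1}^{-1}(\unionma{Q})$, which is disjoint from $\beta_{M,Q_1}^{-1}(U)$; hence the subsequent blow-ups do not affect the preimage of $U$, and the inductive hypothesis applies to $[M:Q_1]$ together with the family $\maQ'$ and the neighborhood $\beta_{M,Q_1}^{-1}(U)$ of $P$ (identified with its preimage).

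Once this diffeomorphism is established, the \psbmanifold{} $\widehat P=\beta^{-1}(P)\subset\beta^{-1}(U)$ is canonically identified with $P\subset U$, and the function $(r_P\circ\beta)\bigr|_{\beta^{-1}(U)}$ corresponds to $r_P\bigr|_U$ under this identification. Since the restriction of $r_P$ to $U$ is clearly a smoothed distance function to $P$ in $U$ (blow-up is compatible with restriction to open sets, so $[U:P]$ is an open subset of $[M:P]$ and a boundary defining function restricts to a boundary defining function), we obtain that $r_P\bigr|_U\circ\beta_{U,P}$ is a boundary defining function for $\beta_{U,P}^{-1}(P)$ in $[U:P]$.

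To transport this back to $[[M:\maQ]:\widehat P]$, I would invoke the same locality argument one more level up: the blow-up $[[M:\maQ]:\widehat P]$ restricted to a neighborhood of the exceptional divisor over $\widehat P$ is canonically diffeomorphic to $[\beta^{-1}(U):\widehat P]$, which via the diffeomorphism of the previous paragraph is canonically diffeomorphic to $[U:P]$. Under this chain of identifications, $(r_P\circ\beta)\circ\beta_{[M:\maQ],\widehat P}$ corresponds to $r_P\bigr|_U\circ\beta_{U,P}$, which we have just shown is a boundary defining function for the corresponding hyperface. Since being a boundary defining function is a local property, and $r_P\circ\beta$ is continuous on all of $[M:\maQ]$ (composition of continuous maps) and smooth and positive outside $\widehat P$, this suffices to conclude that $r_P\circ\beta$ is a smoothed distance function to $\widehat P$ in $[M:\maQ]$, as required by Definition~\ref{def.r_X}.

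The main subtle point will be the bookkeeping in the induction of the first paragraph, namely verifying that the pull-back operation $\maQ\mapsto\maQ'$ keeps the blown-up sets away from the neighborhood of $P$. Everything else is essentially a clean transcription of the fact that the blow-up construction is local and that $P$ sees nothing of the process of blowing up $\maQ$. Alternatively, one could give a proof directly from Lemma~\ref{lemma.cancellation2} by induction, using it iteratively to exchange $\widehat P$ past each $Q_i$ and obtain $[[M:\maQ]:\widehat P]\simeq[[M:P]:\widetilde\maQ]$ with $\widetilde\maQ$ still disjoint from the exceptional divisor; but the locality argument above is cleaner and avoids repeated applications of the cancellation lemma.
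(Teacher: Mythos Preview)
Your argument is correct and follows essentially the same approach as the paper: both proceed by induction on $k$ and exploit the fact that the blow-up along $\maQ$ is a diffeomorphism over the complement of $\unionma{Q}$, hence over a neighborhood of $P$. The paper organizes the induction slightly differently---it treats $k=1$ directly via Lemma~\ref{lemma.cancellation2} (the diffeomorphism $[M:Q,P]\smallsetminus\beta^{-1}(Q)\simeq[M\smallsetminus Q:P]$) and then observes that the induction step is another instance of $k=1$---whereas you first establish the diffeomorphism $\beta^{-1}(U)\simeq U$ for all $k$ and then verify the defining-function property once; but the content is the same.
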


\begin{proof}
Let us prove our result by induction on $k$. For $k =0$ (\ie for $\maQ = 
\emptyset$) there is nothing to prove according to our conventions for the 
blow-up with respect to an empty family. Let us write $Q = Q_1$, for the 
simplicity of the notation. Let us prove our result for $k = 1$. The blow-down 
$[M: Q, P] \to [M: P]$ induces a diffeomorphism $[M: Q, P] \smallsetminus 
\beta^{-1}(Q) \to [M \smallsetminus Q: P]$ (see Lemma \ref{lemma.cancellation2}).
The function $r_P \circ \beta_{M, Q}$ is a smoothed distance function to
$\beta^{-1}(P)$ on $M \smallsetminus Q \subset [M: Q]´$ (since it coincides with $r_P$
there). Moreover, $r_P \circ \beta_{M, Q}$ is smooth everywhere on the
iterated blow-up $[M: Q, P]$ and vanishes only on the preimage of $P$
(in particular, it is $>0$ on the preimage of $Q$, which is disjoint
from the preimage of $P$). Hence $r_P \circ \beta_{M, Q}$ 
is a smoothed distance function to $\beta_{M, Q}^*(P) = \beta_{M, Q}^{-1}(P)$ 
in $[M: Q]$. The induction step is always to perform an additional blow-up,
so it reduces to the case $k = 1$ just proved.
\end{proof}

\subsection{Smoothed distance functions to a family}

We need to extend the definition of the smoothed distance function 
to families. (The reader may want to review Definition~\ref{def.iterated.bu} 
at this point.)

\begin{definition} \label{def.rho_maS}
Let $\maP \coloneqq  (P_i)_{i=1}^k$, $P_i \subset M$ be a blow-up-suitable 
$k$-tuple (that is, an ordered family of closed subsets of $M$ such that the 
iterated blow-up $[M: P_1, P_2, \ldots, P_k]$ is defined, Definition \ref{def.bu.suitable}).
Let $\beta : [M: P_1] \to M$ be the blow-down map and $\maP^\prime \coloneqq  
\bigl( \beta^*(P_2), \beta^*(P_3), \ldots, \beta^*(P_k) \bigr)$, which,
we recall, is a family of closed \psubmanifolds{} of $[M: P_1]$. We then define 
\emph{a smoothed distance function} $\rho_{\maP}$ to $\maP$ in $M$ to be a function 
$\rho_{\maP} : M \to [0, \infty)$ given by induction on $k$ by the formula
\begin{equation*}
    \rho_{\maP}(x) \ede
    \begin{cases}
      \ \ r_{P_1}(x) & \mbox{ if } k = 1 \\
      \ r_{P_1}(x) \rho_{\maP^\prime}(y) & \mbox{ if } k > 1 \ \mbox{ and } 
      y \in \beta^{-1}(\{x\}) \,,
    \end{cases}
\end{equation*}
where $r_{P_1}:M\to [0,\infty)$ is a smoothed distance function to $P_1$ 
in $M$ (Definition \ref{def.r_X}) 
and $\rho_{\maP'}$ is a smoothed distance function to $\maP'$
in $[M: P_1]$.
(We note that the last expression is well-defined since, for $x\notin P_1$,  $y$ is unique 
whereas, for $x\in P_1$, we have $r_{P_1}(x)=0$.)
\end{definition}

Again, we use the notation introduced in Definition~\ref{def.iterated.bu} (which is, in turn,
the same as the one in \citeschrSpace{Definition}{def.iterated.bu}). We have then, 
similarly to \citeschrSpace{Remark}{rem.cond.ex}.

\begin{remark}\label{rem.cond.ex.bis}
We have that $\rho_\maP$ is continuous on
$M$ and that $\rho_\maP^{-1}(\{0\})= \unionP = \bigcup_{j=1}^k P_j$.\
Let $\gamma_0 = \id$, $\gamma_1 \coloneqq  \beta^{*}_1$
and $\gamma_j \coloneqq  \beta^{*}_j \circ
\gamma_{j-1} = \beta^{*}_j \circ ... \circ \beta^{*}_1$, where
\begin{equation*}
  \beta_k \ede \beta_{[M:P_1,\ldots, P_{k-1}], [P_k:P_1,\ldots, P_{k-1}] } : [M :
    P_1, \ldots, P_k] \to [M : P_1, \ldots, P_{k-1}] \,.
\end{equation*}
If,  $x \notin \unionP$, then
\begin{equation*}
   \rho_{\maP}(x) \seq r_{\gamma_0^*(P_1)} (x) r_{\gamma_1^*(P_2)} (x) 
   \ldots r_{\gamma_{k-1}^*(P_k)} (x)\,.
\end{equation*}
In particular, it follows that the pull-back of $\rho_{\maP}$ to  
$[M :\maP]$ is smooth, or equivalently, suppressing all pull-backs from the notation, 
$\rho_{\maP}\in \maC^\infty\bigl([M :\maP]\bigr)$. However note, that in contrast to 
the non-iterated case,  $\rho_{\maP}\in \maC^\infty\bigl([M :\maP]\bigr)$ is no longer 
a boundary defining function for some boundary hypersurface of $[M :\maP]$.
\end{remark}

\begin{remark}\label{rem.rho_maF} As in Remark \ref{rem.r_X.two},
a smoothed distance to $\maP$ function $\rho_\maP$ (see Definition \ref{def.rho_maS})
is not unique, but it is unique \emph{up to equivalence} on $[M: \maP]$.
Indeed, this follows from Remark \ref{rem.cond.ex.bis} since all the factors 
$r_{\gamma_{k-1}^*(P_k)}$ are uniquely determined up to equivalence on $[M: \maP]$
and the equivalence is compatible with products and preserved when we increase~$\maP$.
\end{remark}

The following result extends to smoothed distance functions the
compatibility with products of Proposition \ref{prop.lemma.product}.

\begin{proposition}\label{prop.distance.product} 
Let $M$ and $M_1$ be two manifolds with corners and $\maP = (P_i)_{i=1}^k$ be a
blow-up-suitable $k$-tuple of subsets of $M$. Then
$$\maP \times M_1\coloneqq\left( P_j\times M_1\right)_{j=1}^k$$
is a blow-up-suitable $k$-tuple of subsets of $M \times M_1$. Let $r_\maP$ be 
a smoothed distance to $\maP$ in $M$ and $p : M \times M_1 \to M$ the projection.
Then $r_\maP \circ p$ is a smoothed distance function to
$\maP \times M_1$ in $M \times M_1$.
\end{proposition}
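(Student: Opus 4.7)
The plan is to prove both assertions simultaneously by induction on $k = |\maP|$, using Proposition~\ref{prop.lemma.product} (compatibility of a single blow-up with products) together with the recursive structure of the smoothed distance in Definition~\ref{def.rho_maS}. The key bookkeeping identity driving the induction is that, under the canonical diffeomorphism of Proposition~\ref{prop.lemma.product}, pull-back commutes with taking products, so $(\maP \times M_1)^\prime = \maP^\prime \times M_1$ at every step.

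For the base case $k = 1$, write $\maP = (P)$. Proposition~\ref{prop.lemma.product} yields that $(P \times M_1)$ is blow-up-suitable, together with a canonical diffeomorphism
\begin{equation*}
   \Phi : [M \times M_1 : P \times M_1] \xrightarrow{\sim} [M : P] \times M_1
\end{equation*}
intertwining $\beta_{M \times M_1, P \times M_1}$ with $\beta_{M, P} \times \id_{M_1}$. In particular, $\Phi$ identifies the exceptional set $\beta_{M \times M_1, P \times M_1}^{-1}(P \times M_1)$ with $\SS(N^M_+ P) \times M_1$, which is a boundary hyperface (or disjoint union of such) of $[M:P] \times M_1$. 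The pull-back of $r_P \circ p$ via $\beta_{M \times M_1, P \times M_1}$ then equals, via $\Phi$, the pull-back of $r_P \circ \beta_{M, P}$ along the product projection $[M:P] \times M_1 \to [M:P]$. Since $r_P \circ \beta_{M, P}$ is by hypothesis a boundary defining function for $\SS(N^M_+P)$ in $[M:P]$, pulling it back through a trivial product projection gives a boundary defining function for $\SS(N^M_+ P) \times M_1$, as needed.

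For the induction step, assume the proposition for all $(k-1)$-tuples and let $\maP = (P_1, \ldots, P_k)$. Applying the base case to $P_1$ and using that $\beta^*$ manifestly commutes with $\times M_1$ on closed subsets disjoint from (or adapted to) $P_1$, we obtain, under $\Phi$,
\begin{equation*}
   \beta^*_{M \times M_1, P_1 \times M_1}(P_j \times M_1) \seq \beta^*_{M, P_1}(P_j) \times M_1
   \quad \text{for } j \geq 2,
\end{equation*}
so that $(\maP \times M_1)^\prime = \maP^\prime \times M_1$. Blow-up-suitability of $\maP \times M_1$ in $M \times M_1$ then follows from the induction hypothesis applied to $\maP^\prime \times M_1$ in $[M:P_1] \times M_1$. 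For the smoothed distance, unwinding Definition~\ref{def.rho_maS} and choosing a preimage $(y, x_1) \in \beta_{M \times M_1, P_1 \times M_1}^{-1}(\{(x, x_1)\})$ (which, under $\Phi$, has $y \in \beta_{M, P_1}^{-1}(\{x\})$), we get
\begin{equation*}
   \rho_{\maP \times M_1}(x, x_1) \seq r_{P_1 \times M_1}(x, x_1)\, \rho_{(\maP \times M_1)^\prime}(y, x_1) \seq r_{P_1}(x)\, \rho_{\maP^\prime}(y) \seq \rho_\maP(x) \,,
\end{equation*}
where the first factor is handled by the base case and the second by the induction hypothesis applied in $[M:P_1] \times M_1$. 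This is exactly $\rho_\maP \circ p$ evaluated at $(x, x_1)$, up to the equivalence ambiguity inherent in Definition~\ref{def.rho_maS}.

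I do not anticipate a substantive obstacle: the argument is essentially a commutation-of-functors exercise, and the only delicate point is verifying that the canonical identifications of Proposition~\ref{prop.lemma.product} respect the iterated pull-back $\maP \mapsto \maP^\prime$ at each inductive step, which is precisely the displayed identity $(\maP \times M_1)^\prime = \maP^\prime \times M_1$.
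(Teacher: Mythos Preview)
Your proposal is correct and follows essentially the same approach as the paper: induction on $k$, with the base case handled via Proposition~\ref{prop.lemma.product} and the observation that a boundary defining function pulls back through a product projection to a boundary defining function, and the induction step driven by the identity $\beta^*(P_j \times M_1) = \beta^*(P_j) \times M_1$ (equivalently, $(\maP \times M_1)' = \maP' \times M_1$) together with the recursive Definition~\ref{def.rho_maS}. Your write-up is in fact more explicit than the paper's, which only sketches the argument.
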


\begin{proof}
If $H$ is a hyperface of $M$ and $r_H$ is a defining function for
$H$ in $M$, then $r_H \circ p$ is a defining function for the hyperface
$H \times M_1$ in $M \times M_1$. The result follows from definitions 
by induction on $k$ using repeatedly this observation. Indeed, there is nothing 
to prove if $k = 0$. The case $k = 1$ follows from Proposition \ref{prop.lemma.product} (used
also for $k = 1$) and the observation about hyperfaces. 
The induction step is completely similar to the case $k=1$ and it reduces to
that one, as in the proof of Proposition \ref{prop.lemma.product}, using that proposition,
the relation $\beta^*(P \times M_1) = \beta^*(P) \times M_1$, and
the definition of smoothed distance functions.
\end{proof}

From now on, we shall assume that our $k$-tuple is a clean semilattice
of closed \psbmanifolds{} of $M$,
endowed with an admissible order. Also, from now on we shall denote our $k$-tuple
with $\maS$ instead of $\maP$ in order to stress that it is stable for
intersections (\ie that  $\maS$ is a semilattice). (Recall that we can choose any 
admissible order on $\maS$.) The following proposition will play a crucial role in 
our application to $N$-body type problems.

\begin{proposition} \label{prop.smoothness}
Let $\maS$ be a clean semilattice of closed \psubmanifolds{}
of a connected manifold with corners $M$ and $\emptyset \neq Y \in \maS$.
Let $r_Y$ be a smoothed distance function to $Y$ in $M$ (Definition \ref{def.r_X}) 
and $\rho_\maS$ be a smoothed distance function to $\maS$ in $M$ 
(Definition \ref{def.rho_maS}). Then the function 
$\rho_{\maS}/r_Y : M \smallsetminus \unionma{S} \to (0, \infty)$ extends 
to a smooth function on $[M: \maS]$.
\end{proposition}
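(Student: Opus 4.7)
The plan is to reduce the proposition to a repeated application of Lemma~\ref{lemma.Y:P}, after choosing the order on $\maS$ carefully so that $r_Y$ appears as an initial piece of the product defining $\rho_\maS$.

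By Proposition~\ref{prop.ex.ao}, I fix an admissible order $(P_0, P_1, \ldots, P_k)$ on $\maS$, with $P_0 = \emptyset$, such that all elements preceding $Y$ are contained in $Y$; in particular there is some $j \in \{1, \ldots, k\}$ with $P_j = Y$ and $P_1, \ldots, P_{j-1} \subset Y$. Writing $M_i \coloneqq [M : P_1, \ldots, P_i]$ and $\gamma_i$ for the successive pull-back operator to $M_i$ as in Remark~\ref{rem.cond.ex.bis}, one may take, on $M \smallsetminus \unionma{S}$,
\begin{equation*}
    \rho_{\maS} \seq r_{P_1} \cdot r_{\gamma_1(P_2)} \cdots r_{\gamma_{k-1}(P_k)}.
\end{equation*}

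The key step is to prove by induction on $i \in \{0, 1, \ldots, j\}$ that there exists a smoothed distance $s_i$ to $\gamma_i(Y)$ in $M_i$ such that
\begin{equation*}
    r_Y \;\sim\; r_{P_1} \cdot r_{\gamma_1(P_2)} \cdots r_{\gamma_{i-1}(P_i)} \cdot s_i
    \quad \text{on } M_i,
\end{equation*}
where $\sim$ denotes smooth equivalence in the sense of Definition~\ref{def.equiv.funct}. The base case $i = 0$ is trivial, taking $s_0 = r_Y$. For the step from $i$ to $i+1$ (with $i+1 \le j$): since $P_{i+1} \subset Y$ as closed \psbmanifolds{} of~$M$, functoriality of the lift (Proposition~\ref{prop.beta.m1}) yields $\gamma_i(P_{i+1}) \subset \gamma_i(Y)$ as closed \psbmanifolds{} of~$M_i$. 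Applying Lemma~\ref{lemma.Y:P} in $M_i$, with $s_i$ playing the role of $r_P$ and $r_{\gamma_i(P_{i+1})}$ the role of $r_Q$, shows that $r_{\gamma_i(P_{i+1})}^{-1} s_i$ extends to a smoothed distance function $s_{i+1}$ to $\gamma_{i+1}(Y)$ in $M_{i+1}$, i.e.\ $s_i \sim r_{\gamma_i(P_{i+1})} \cdot s_{i+1}$. At the final step $i = j$ we have $P_j = Y$, so $\gamma_j(Y) = \gamma_j(P_j) = \emptyset$ by Definition~\ref{def.beta.pullback}, and therefore $s_j$ is a smooth positive function on $M_j$ (a boundary defining function for the empty hyperface), so $s_j \sim 1$.

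Combining, on $M \smallsetminus \unionma{S}$ we obtain
\begin{equation*}
    \frac{\rho_{\maS}}{r_Y} \;\sim\; r_{\gamma_j(P_{j+1})} \cdots r_{\gamma_{k-1}(P_k)},
\end{equation*}
and the right-hand side extends smoothly to $[M : \maS]$ by Remark~\ref{rem.cond.ex.bis}. Pulling the intermediate equivalences back from $M_j$ to $[M : \maS]$ preserves smoothness, and hence $\rho_{\maS}/r_Y$ extends smoothly to $[M : \maS]$, as required. The main technical hurdle is the bookkeeping inside the induction—specifically, confirming at each stage that $\gamma_i(Y)$ is still a closed \psbmanifold{} of $M_i$ (which uses $i < j$, so $Y$ has not yet been blown up), that $\gamma_i(P_{i+1}) \subset \gamma_i(Y)$ as closed \psbmanifolds{} (functoriality of the lift and the clean-semilattice hypothesis), and that the smooth equivalences established on the intermediate $M_i$ remain valid after pull-back to the total blow-up $[M : \maS]$; all of these are direct consequences of the results collected in Section~\ref{sec2bis}.
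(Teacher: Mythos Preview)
Your proof is correct and follows the same core idea as the paper's: peel off the factors of $\rho_\maS$ one by one and use Lemma~\ref{lemma.Y:P} to show that the initial factors together are equivalent to $r_Y$. The one genuine difference is in how the case analysis is handled. The paper does induction on $|\maS|$ with an arbitrary admissible order and, at each step, distinguishes whether the minimal element $P_1$ satisfies $P_1 \subset Y$ (using Lemma~\ref{lemma.Y:P}) or $P_1 \cap Y = \emptyset$ (using Lemma~\ref{lemma.disjoint.sd}). You instead invoke Proposition~\ref{prop.ex.ao} to choose the admissible order so that \emph{all} elements preceding $Y$ are contained in $Y$; this forces every step of your induction into the $P_{i+1} \subset Y$ case, so Lemma~\ref{lemma.Y:P} alone suffices and Lemma~\ref{lemma.disjoint.sd} is never needed. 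Your route is a little cleaner at the cost of invoking the stronger ordering statement in Proposition~\ref{prop.ex.ao}; the paper's route works with any admissible order but pays for it with the extra case split.
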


Note that in the statement of the proposition, it is important that $r_Y$ 
is a smoothed distance in $M$ and not in some blow-up of $M$. The proposition would 
be a trivial consequence of the recursive definition of $\rho_\maS$ if we replaced 
$r_Y$ by a smooth distance function in some suitable blow-up of $M$.

\begin{proof} 
There is no loss of generality to assume that $\emptyset \in \maS$,
so we will do so for brevity. (The case $\emptyset \notin \maS$ is
completely similar.) We shall prove the statement by induction on the number of elements
of $\maS$.

If $\maS$ has only one non-trivial element, that is, if 
$\maS = (\emptyset,Y)$, then we have  $\rho_{\maS} \simeq r_Y$,
by the definition of $\rho_{\maS}$, as explained in Remarks \ref{rem.r_X.two} and 
\ref{rem.rho_maF}.  

Now, for the induction step, let us arrange 
$\maS = (P_0  = \emptyset, P_1, P_2, \ldots, P_k)$ in an admissible order.
Let us blow up along
$P_1 \in \maS$ (which is a minimal element of $\maS \smallsetminus \{\emptyset\}$,
by the definition of an admissible order).
We use the notation introduced in Definition \ref{def.rho_maS}, in particular~~$\maS'$ \sout{$\maS''$} consists of 
the lifts to $[M: P_1]$ of the \psubmanifold s $P_2,\cdots, P_k$. If $Y = P_1$,
then the result follows from the formula for $\rho_{\maS}$ in Remark~\ref{rem.cond.ex.bis}.
Let us assume therefore that $Y \neq P_1$. Since $P_1$ is a minimal (non-empty) element
of~$\maS$, we have two possibilities: either $P_1 \subset Y$ or $P_1 \cap Y = \emptyset$.

Let us assume that we are in the case $P_1 \subset Y$. 
We choose then as a smoothed distance for $[Y: P_1]$ in $[M: P_1]$
the function $r_{[Y: P_1]}^\prime \coloneqq  r_Y/r_{P_1}$, which is possible
in view of Lemma~\ref{lemma.Y:P}. By the induction hypothesis, we know that
$(r_Y/r_{P_1})^{-1} \rho_{\maS^\prime}$ extends to a smooth function
on $\bigl[[M:P_1]:\maS^\prime\bigr]$.

Since $[[M:P_1]:\maS^\prime] = [M: \maS]$, by the definition of the iterated
blow-up, by suppressing pull-backs in notation we obtain that 
\begin{equation}
   r_Y^{-1} \rho_{\maS} \seq r_Y^{-1} r_{P_1} \rho_{\maS^\prime} 
   \seq (r_Y/r_{P_1})^{-1} \rho_{\maS^\prime} \in \CI([M: \maS])\,.
\end{equation}
This equation should be understood in the sense of functions $M\setminus\bigcup\maS\to \RR_{+}$ 
that extend to smooth functions on $[M: \maS]$. Thus, we have obtained 
the desired relation, so the proof is complete in the case $P_1 \subset Y$.

Let us assume now that $P_1 \cap Y = \emptyset$. The proof is then an
immediate application of Lemma~\ref{lemma.disjoint.sd} for $k = 1$.
\end{proof}

The following remark recalls \cite[Lemma~3.17]{ACN}.

\begin{remark}\label{rem.useful}
Recall that in this and in the following sections,
$\maS$ is a clean semilattice of closed \psbmanifolds{} of~$M$.
Let $\rho(x) \coloneqq  \dist_{\overline g}(x, \maS)
\coloneqq \dist_{\overline{g}}(x,\unionS)$ be the 
distance to $\maS$ in some \emph{true} metric $\overline g$ on $M$. 
(That is, a metric on $TM$ that extends smoothly to the boundary of~$M$,
unlike a ``compatible metric,'' a concept that will be introduced
shortly and which is really only a metric on $M \smallsetminus \pa M$ 
with some additional properties.) One can show that the functions $\rho_{\maS}$ 
and $\rho$ are continuously equivalent.
A special case of this statement which is in fact the only case needed in 
the present article was proved in  \cite[Lemma~3.17]{ACN} based on preliminary 
work in \cite{BMNZ}. In \cite{ACN} it is additionally assumed that all  
\psubmanifolds{} in~$\maS$ are \emph{interior submanifolds}. By definition, 
an interior submanifold is a  \psubmanifold{} of  positive
boundary depth, \ie no component is contained in $\partial M$. However, a proof 
may be extended to the full statement claimed above; the result extends to 
\psubmanifolds{} that are not interior, either by adapting the proofs or by 
embedding the manifold~$M$ into a Riemannian manifold~$N$ without corners and 
boundary, such that all boundary hypersurfaces of~$M$ are totally geodesic.
The function $\rho$, however is not smooth on $M \smallsetminus \unionma{S}$, 
in general. This justifies the notations $\rho$ and $\rho_\maS$, since they are
very similar in scope. See Lemma~\ref{lemma.Lie.le} for an extension of this remark.
\end{remark}

\subsection{Blow-ups and Lie manifolds}
We now look at a class of differential operators that will ``desingularize''
the $N$-body Hamiltonian (with Coulomb or inverse square potentials). 
Let thus $M$ be a manifold with corners and $\maV \subset \CI(M; TM)$ be a subspace
of smooth vector fields on $M$ that is stable for the Lie bracket and for multiplication 
with functions in $\CI(M)$. We then let $\Diff_\maV^m(M)$ denote the space of 
differential operators of order $\le m$ on $M$ generated by $\CI(M)$ and by $\maV$
and 
\begin{equation}\label{eq.def.diffV}
  \Diff_\maV(M) \ede \bigcup_{m\in \NN_0} \Diff_\maV^m(M) \seq 
  \Bigl\{\, \sum a V_1 V_2 \ldots V_k \, 
  \vert \ a \in \CI(M), V_j \in \maV \, \Bigr\}\,.
\end{equation}
Furthermore, we will assume that $(M, \maV$) is a ``Lie manifold,'' a
concept from \cite{aln1} that we now recall.

\begin{definition} \label{def.Lie.Man}
Let $M$ be a smooth, compact manifold with corners and $\maV \subset
\CI(M; TM)$ be a subspace of smooth vector fields on~$M$. In addition to
the relations $\CI(M)\maV = \maV$ and $[\maV , \maV] \subset \maV$,
let us also assume that:
\begin{enumerate}[(i)]
    \item $\CIc(M \smallsetminus \pa M; TM) \subset \maV$;
    \item all vector fields in $\maV$ are tangent to the boundary; and
    \item\label{def.Lie.Man.iii} $\maV$ has a $\CI(M)$-local basis near each point.
\end{enumerate}
  
Then we shall say that $(M; \maV)$ is a \emph{Lie manifold}; the space $\maV$ 
will be called its associated \emph{structural Lie algebra of vector fields}.
\end{definition}

\begin{remark}\
The condition~\eqref{def.Lie.Man.iii} in the above definition
means that any $p\in M$ has a closed neighborhood $U$  such that the restriction of 
$\maV$ to $U$ is a free $\CI(U)$-module. An even better way of expressing 
condition~\eqref{def.Lie.Man.iii} is to assume the existence of a vector bundle 
$A\to M$ together a vector bundle morphism $\varrho: A \to TM$ over $\id_M$ with 
$\varrho(\Gamma(A)) = \maV$. The morphism $\varrho$ is called an \emph{anchor}
map, see \cite{aln1, CamilleBook} and the references therein. 
Then Condition~(i) is equivalent to saying that
$\varrho$ restricts to a vector bundle isomorphism on $\interior M$ $ \ede M 
\smallsetminus \pa M$. Similarly, Condition~(ii) is equivalent to saying that,
for any $p\in \partial M$ of boundary depth $1$, we have 
$\varrho(A_p)\subset T_p(\partial M)\subsetneq T_pM$. This condition 
then implies analogous tangency conditions for points of boundary depth 
$\geq 2$. 
\end{remark}

The following simple and basic example(s) will play a crucial role in our
applications to the $N$-body problem.

\begin{example}\label{ex.basic.LM.1}
We continue to assume that $X$ is a finite-dimensional real vector space and 
and that $\oX$ is its spherical compactification. We shall take $\oX$ for our
ambient manifold (thus $\oX$ plays the role of the manifold typically called 
$M$ so far). Then $X$ identifies with the space of constant vector fields on itself.
Corollary~\ref{cor.ext.zero} tells us that these vector fields extend to vector 
fields on $\oX$ and that these extended vector fields vanish on $\SS_X$. 
We shall consider then the structural Lie algebra of vector fields 
$\maV \coloneqq  \CI(\oX) X$. Then the pair $(\oX; \maV)$ is a Lie manifold 
(Definition \ref{def.Lie.Man}).
Indeed, $\maV$ is a free $\CI(\oX)$--module since any basis of $X$, viewed as a vector space, 
also provides a basis of $\maV$ as a $\CI(\oX)$-module. The resulting algebra of differential 
operators $\Diff_\maV(\oX)$ is the algebra of differential operators on~$X$ with coefficients
in $\CI(\oX)$.
\end{example}

To a Lie manifold $(M; \maV)$ there is canonically
associated a class of metrics on the interior of $M$, called \emph{$\maV$-compatible 
metrics} on $\interior M = M \smallsetminus \pa M$, as follows:

\begin{definition}\label{def.comp.metric}
Let $(M, \maV)$ be a Lie manifold, $n = \dim(X)$.
A \emph{$\maV$-compatible metric on~$M$} is defined as a Riemannian metric on 
 $\interior M$ with the following property: each 
$x \in M$ has a closed neighborhood $U_x$ such that the restriction of $\maV$ 
to $U_x$ has a $\CI(U_x)$-basis $(v_1, v_2, \ldots, v_n)$ that is orthonormal in the sense
  $$\forall y\in \interior M\cap U_x:\;(v_1, v_2, \ldots, v_n) \mbox{ is an orthonormal basis of}(T_yM,g_y).$$
(Of course, this is a true restriction 
on $g$ and $\maV$ only at the boundary of $M$).
\end{definition}

A $\maV$-compatible metric on $M$ will never be a true metric on $M$ since
the induced metric on $T\interior{M}$ will not extend continuously to $M$.
For instance, for Example \ref{ex.basic.LM.1} (and the related Example \ref{ex.basic.LM.2}
relevant for the $N$-body problem), 
any Euclidean metric on $X$ will be a $\maV$-compatible
metric on $M$.

\begin{remark}\label{rem.alt.descrip.compat.met}
An alternative way to define a \emph{$\maV$-compatible} is as follows. 
We briefly recall this, as it will be used in Appendix~\ref{app.a}. See  
\cite{aln1} for a proof of the equivalence of the definitions.
Let $(M,\maV)$ be a Lie manifold with anchor map $\varrho : A \to TM$ as 
defined above.  A compatible metric may be defined as a metric on $A$, which 
is by definition a symmetric positive definite section $\gamma\in \Gamma(A^*\otimes A^*)$. 
The anchor map $\varrho$ induces a map $\rho_*=\rho\otimes \rho\colon A\otimes A\to TM\otimes TM$, 
and on $\interior{M}$ the inverse of the anchor map, namely $\left(\varrho|_{\interior{M}}\right)^{-1}$, 
may be dualized to a pull-back map  $\left(\left(\varrho|_{\interior{M}}\right)^{-1}\right)^*\colon A^*|_{\interior{M}}\to T^*\interior{M}$. Then $g\coloneqq \left(\left(\varrho|_{\interior{M}}\right)^{-1}\right)^*\otimes \left(\left(\varrho|_{\interior{M}}\right)^{-1}\right)^*(\gamma)$ is a 
Riemannian metric on $\interior{M}$. A Riemannian metric on  $\interior{M}$ is  
\emph{$\maV$-compatible} if, and only if it arises this way.

If $B \in V^* \otimes V^*$ is a symmetric 
bilinear form on a finite-dimensional vector space $V$, then 
we denote $\flat_B : V\to V^*$, $X\mapsto B(X,\argu)$ the associated
canonical homomorphism. If $B$ is non-degenerate, $\flat_B$ is invertible and one defines
$\sharp_B\coloneqq \flat_B^{-1}$. In this case we can define $\doublesharp{B}\coloneqq 
B(\sharp_B(\argu),\sharp_B(\argu))\in V \otimes V$  whose matrix with respect to some 
basis of $V$ is the inverse of the matrix of $B$ with respect to its dual basis. 
Now, if $\gamma$ is a metric on the vector bundle $A$, we map apply this construction for any 
$q\in M$ to $\gamma|_q\in A_q^*\otimes A_q^*$, and we obtain a smooth section $\doublesharp{\gamma}\in 
\Gamma(A\otimes A)$. Then $G\coloneqq \bigl(\rho\otimes \rho\bigr)( \doublesharp{\gamma})\in \Gamma(TM\otimes TM)$ is 
a well-defined smooth tensor. One can easily show that $G$ is the unique continuous extension of 
$\doublesharp{g}$ for the $g$ defined above. We will use this description to proof in Appendix~\ref{app.a} 
for a true metric $\ol g$ and a compatible metric $g$ we have, see Lemma~\ref{lemma.Lie.le}, a 
constant $C$ with $\overline g \le C g$ in the sense of symmetric forms, \ie $Cg-\ol g$ is 
positive semi-definite.
\end{remark}
 Any two $\maV$-compatible metrics $g$ and $\tilde g$ are Lipschitz 
equivalent, \ie there exists a constant $C>0$ with $C^{-1} 
 \tilde g\leq g \leq C \tilde g$, again in the sense of symmetric forms.
Thus the volume forms are Lipschitz equivalent as well, and hence the 
definition of the space $L^p(M,g) = L^p(M; \maV)$
only depends on $\maV$, and not on the metric. This allows us to 
define the associated \emph{Sobolev spaces} on $X$ (or $M$) as 
\begin{equation}\label{eq.def.Wkp}
        W^{k,p}(M; \maV) \seq \bigl\{u \,\bigm|\; V_1 \ldots V_j
        u \in L^p(M; \maV),\, \forall V_1, \ldots, V_j \in \maV, \,
        j \in\{0,1,\ldots, k\}\bigr\}\, 
\end{equation}
where $k\in \NN_0$ and $1\leq p\leq \infty$. Obviously, they do not depend on the choice of 
$\maV$-compatible metric. These Sobolev spaces coincide with the standard Sobolev spaces on 
the complete Riemannian manifold associated with any compatible metric.

The main reason we are interested in Lie manifolds is that there is also a notion 
of an associated pseudodifferential calculus \cite{aln2}, a notion of
ellipticity in $\Diff_\maV(M)$, and, most importantly for this paper, an elliptic 
regularity result \cite{sobolev}. Explicitly, an operator in $\Diff_\maV^m(M)$ is 
\emph{elliptic} in $\Diff_\maV^m(M)$ if its principal 
symbol is uniformly elliptic in one (equivalently, any) $\maV$-compatible metric.

The following theorem is the special case $s=0$ and $m\in \ZZ_+$ of \cite[Theorem 8.7]{sobolev}.

\begin{theorem}[Ammann-Ionescu-Nistor]\label{theorem.ain}  
Let $(M; \maV)$ be a Lie manifold. Let $k, j \in \ZZ$, $m \in \ZZ_+$, 
$1 < p < \infty$, and $P \in \Diff_\maV^m(M)$ be elliptic. Let $u \in W^{k,p}(M; \maV)$
be such that $P u \in W^{j,p}(M; \maV)$. Then $u \in W^{j+m,p}(M; \maV)$.
\end{theorem}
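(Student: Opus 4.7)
The plan is to deduce this from the pseudodifferential calculus $\Psi_\maV^*(M)$ associated to the Lie manifold $(M;\maV)$ (constructed in \cite{aln2}), by the classical parametrix method. First I would recall the key properties of this calculus: that $\Diff_\maV^m(M) \subset \Psi_\maV^m(M)$; that $\Psi_\maV^*(M)$ is filtered and closed under composition with $\Psi_\maV^{m_1}\cdot\Psi_\maV^{m_2}\subset \Psi_\maV^{m_1+m_2}$; that there is a well-behaved principal symbol map $\sigma_m$ with respect to which ellipticity of $P$ means $\sigma_m(P)$ is invertible on $A^*\setminus 0$, where $A\to M$ is the anchor bundle from Definition~\ref{def.Lie.Man}; and that $\Psi_\maV^{-\infty}(M)$ is a residual ideal.

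Next, I would establish the continuity of pseudodifferential operators on the Lie-Sobolev scale defined in \eqref{eq.def.Wkp}: any $A\in \Psi_\maV^s(M)$ extends to a bounded operator
\begin{equation*}
    A : W^{k,p}(M;\maV) \longrightarrow W^{k-s,p}(M;\maV)
\end{equation*}
for every $k\in\ZZ$ and $1<p<\infty$. For integer $k\geq 0$ this follows by induction from the $L^p$-boundedness of order-zero operators together with the commutator identity $[V,A]\in \Psi_\maV^s$ for $V\in\maV$ and $A\in\Psi_\maV^s$; for negative $k$ and for $k-s<0$ one uses duality and the standard trick of writing $W^{k,p}$ as $(1+\Delta_\maV)^{-k/2}L^p$ where $\Delta_\maV$ is the Laplacian of a compatible metric (which is essentially self-adjoint on $\interior M$ since compatible metrics are complete).

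Given ellipticity, I would then build a parametrix $Q\in\Psi_\maV^{-m}(M)$ satisfying
\begin{equation*}
    QP = \id + R_1, \qquad PQ = \id + R_2,
\end{equation*}
with $R_1,R_2\in\Psi_\maV^{-\infty}(M)$. The construction is symbolic and standard: invert $\sigma_m(P)$ to get a first approximation $Q_0\in\Psi_\maV^{-m}$ with $Q_0P=\id-S$, $S\in\Psi_\maV^{-1}$; then asymptotically sum the Neumann series $(\sum_{j\geq 0}S^j)Q_0$ inside the calculus. To finish the proof, given $u\in W^{k,p}(M;\maV)$ with $Pu\in W^{j,p}(M;\maV)$, write
\begin{equation*}
    u = QPu - R_1 u.
\end{equation*}
The first term lies in $W^{j+m,p}(M;\maV)$ by the mapping property of $Q$; the second lies in $W^{\ell,p}(M;\maV)$ for every $\ell\in\ZZ$, since $R_1\in\Psi_\maV^{-\infty}$ maps $W^{k,p}$ into $W^{k+N,p}$ for every $N$. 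Hence $u\in W^{j+m,p}(M;\maV)$.

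The main obstacle is the $L^p$-boundedness of operators in $\Psi_\maV^0(M)$, which underpins everything else. For $p=2$ this can be handled via a Hilbert-space argument using the principal symbol and an inductive reduction to the residual ideal, but for general $p\in(1,\infty)$ one needs a Calder\'on--Zygmund type theorem adapted to the geometry. The interior $\interior M$ equipped with a compatible metric is a complete Riemannian manifold of bounded geometry (this is essentially what Definition~\ref{def.comp.metric} encodes uniformly up to the corners), and the Schwartz kernel of $A\in\Psi_\maV^0$ satisfies the required standard kernel estimates in the uniformly locally finite cover provided by the Lie structure. Verifying these kernel estimates up to the boundary, and checking that the asymptotic summation producing $Q$ genuinely lives in the calculus near the strata of $\pa M$, are the technically delicate steps; once they are in place the proof above goes through verbatim.
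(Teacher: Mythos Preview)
The paper does not prove this theorem: it is quoted as the special case $s=0$, $m\in\ZZ_+$ of \cite[Theorem~8.7]{sobolev}, with no argument given beyond the citation. Your parametrix sketch via the calculus $\Psi_\maV^*(M)$ of \cite{aln2} is precisely the strategy used in \cite{sobolev}, so in substance you have reconstructed the intended proof rather than offered an alternative.

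One point worth sharpening: in \cite{sobolev} the Sobolev spaces $W^{s,p}(M;\maV)$ are defined for all real $s$ (not just integers) via powers of $(1+\Delta_\maV)$, and the mapping property $\Psi_\maV^m : W^{s,p}\to W^{s-m,p}$ is established directly in that generality. Your inductive argument for integer orders plus duality is fine for the statement as written (integer $k,j$), but you should be aware that the actual proof in \cite{sobolev} handles the full real scale at once, which avoids some bookkeeping. Your identification of the $L^p$-boundedness of $\Psi_\maV^0(M)$ as the crux is accurate; in \cite{sobolev} this is obtained from the bounded-geometry structure of $(\interior M,g)$ for compatible $g$, essentially as you describe.
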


Let us mention also that for in Example~\ref{ex.basic.LM.1}
(and the related Example \ref{ex.basic.LM.2}
relevant for the $N$-body problem), an operator $P \in \Diff_\maV(M)$ is 
elliptic in that algebra if, and only if, it is uniformly elliptic (in the usual, 
Euclidean metric of $X$). In particular, the above regularity theorem 
(Theorem~\ref{theorem.regularity}) can also be obtained from the regularity result 
in~\cite{AGN1}. 

Let us now provide an alternative, more general method to obtain a Lie
manifold structure on the Georgescu--Vasy space $\XGV$. (See the Introduction
or Section~\ref{sec4} \eqref{eq.def.XGV} for the definition of $\XGV$.)

\begin{remark}\label{rem.boundary.Lie}
Let $(M; \maV)$ be a Lie manifold and $Y \subset M$ be a closed \psubmanifold.
We assume $Y \subset \pa M$, 
\ie $Y$ is everywhere of positive boundary depth.  We also assume that all 
$V\in \maV$ satisfy $V|_{Y}\in \Gamma(TY)$. The latter condition implies 
(due to a straightforward modification of \cite[Proposition~3.2]{ACN}) that 
any $V\in \maV$ has a \emph{lift} $\tilde V$, that is, a vector 
field on $[M:Y]$  such that
\begin{equation*}
\xymatrix{ [M:Y] \ar[r]^{\tilde V} \ar[d]_{\beta_{M,Y} }&
  T[M:Y] \ar[d]^{d\beta_{M,Y}} \\
  M \ar[r]^V & TM. }
\end{equation*}
commutes. As a consequence, the inclusion $\maV\hookrightarrow \Gamma(TM)$ ``lifts'' 
to a map $\maV\hookrightarrow \Gamma(T[M: Y])$, both maps are injective Lie algebra 
homomorphisms and $\maC^\infty(M)$-linear. We obtain a Lie manifold structure 
on $[M: Y]$ by taking as structural Lie algebra of vector fields 
\begin{equation}
   \maWb \ede \CI([M: Y]\,\maV\,.
\end{equation} 
Let us record here how the various quantities associated with the Lie manifold
$(M, \maV)$ change when going to the blow-up Lie manifold $([M: Y], \maWb)$:
\begin{itemize}
\item The interior smooth manifold is the same: $\interior{M} = M \smallsetminus \pa M 
= [M: Y] \smallsetminus \pa [M: Y]$.
\item If a metric on $M_0$ is $\maV$-compatible, then it is also $\maWb$-compatible.
\item $W^{k,p}(M; \maV) = W^{k,p}([M: Y]; \maWb)$.
\item $\Diff_{\maWb}([M: Y]) = \CI([M: Y])\Diff_\maV(M)$.
\end{itemize}
The last equality is based on the fact that the lift of $\maV$ acts on  
$\CI([M:Y])$ by derivations.
By iterating this construction, we can obtain similarly a Lie manifold structure on 
$[M: \maY]$, where $\maY$ is a clean semilattice 
of \psubmanifolds\ of $\pa M$. Indeed, let us assume that $\maV$ is tangent 
to each manifold $Y \subset \maY$, then $\maV$ will lift to all intermediate
blow-ups leading to $[M : \maY]$. At each step, by density, the lifting of $\maV$ will
be tangent to the manifold with respect to which we blow-up.
\end{remark}

One typically describes a Lie manifold via the local behavior of
the vector fields near the boundary points. This is not what we did in our
case (in the remark right above and the one following next). See \cite{CNQ, Rochon2, Rochon1}
for similar Lie manifold structures that are described locally.

\begin{example} 
Recall from the Introduction that $\maF$ is a finite semi-lattice of linear 
subspaces of $X$ such that $\{0\}\in \maF$ and $X \not \in \maF$.
The Lie manifold structure on $\XGV$ will be obtained starting from $\oX$ by
taking $\maY \coloneqq  \SS_\maF \coloneqq  \{\SS_Y \mid \, Y \in \maF\}$. 
Since the action of
$X$ on the boundary $\SS_X$ of $\oX$ is trivial, see Remark \ref{ex.basic.LM.1} and 
Appendix~\ref{app.spherical.compact}, the vector fields in
$\maV \coloneqq  \{fV\mid f\in \CI(\oX), V\in X\}$ are tangent to all the 
submanifolds of $\SS_\maF$.
\end{example}

The situation described in the last remark changes, however, dramatically
if $Y$ has boundary depth $0$, \ie if no connected component of $Y$ is contained
of the boundary. In that case, there is, in principle, more work to
be done, but that has already been completed to a large extent in \cite{ACN}. 
Let us summarize some required results from that article.

\begin{remark}\label{rem.interior.Lie}
We use the notation introduced in Remark \ref{rem.boundary.Lie}. Unlike there,
however, we shall consider, this time, the case when $Y$ is connected
and has boundary depth~$0$ (that is, it is not contained in the boundary 
$\pa M$ of $M$). Let $r_Y$ be a smoothed distance function to~$Y$,
Definition \ref{def.r_X}. We then obtain a Lie manifold structure on $[M: Y]$ 
as in \cite{ACN} by taking as structural Lie algebra of vector fields 
\begin{equation}\label{eq.Win}
   \maWint \ede r_Y \CI([M: Y])  \maV\,.
\end{equation}
See \cite[Lemma~3.8 and Theorem~3.10]{ACN} where it was proved that all vector 
fields in $r_Y \maV$ lift canonically to smooth vector fields on $[M: Y]$.
(This justifies the additional factor~$r_Y$, since without it, that would not cover
our case.) Let us record here how the various quantities associated with the Lie 
manifold $(M, \maV)$ change when going to the blow-up Lie manifold $([M: Y], \maWint)$:
\begin{itemize}
\item The interior smooth manifold of the blow up is, this time: 
\begin{equation*}
 M_1 \ede [M: Y] \smallsetminus \pa [M: Y]
 \seq  M \smallsetminus (\pa M \cup Y)\,.
\end{equation*}
\item If $g$ is a $\maV$-compatible metric on $M$, then 
$\tilde g\coloneqq r_Y^{-2}\beta_{M,Y}^*g$ is a
 $\maWint$-compatible metric on $[M: Y]$.
\item The volume forms for $g$ and $\tilde g$ are related by the formula
      $\dvol^{\tilde g}= r_{Y}^{-n}\beta_{[M:Y]}^*\dvol^g$, $n = \dim (M)$. 
      As a consequence we have 
\begin{equation*}
  u\in L^p([M:Y]; \maWint) \; \Longleftrightarrow \; r_{Y}^{-n/p}u\in L^p(M; \maV)
  \end{equation*}
\item The Sobolev spaces defined by $\maWint$ are ``weighted Sobolev spaces'' 
in the old metric (compare to Equation \eqref{eq.def.Wkp} and notice the factor 
$r_Y^j$):
\begin{eqnarray*} %\label{eq.def.Wkp2}
        W^{k,p}\bigl([M:Y], \maWint\bigr) &=& \Bigl \{ u \,\big|\; r_Y^{j} V_1 \ldots V_j
        u \in L^p\bigl([M:Y] ; \maWint \bigr),\, \forall V_1, \ldots, V_j \in \maV, \\ 
        && \phantom{\Bigl\{u\mid}  j \in\{0,1,\ldots, k\}\Bigr \}\, 
\end{eqnarray*}

\item 
Let $m \in \ZZ_+$, then
$\Diff_{\maWint}^m([M: Y])$ is the linear span of differential monomials of
the form $r_Y^j a V_1 V_2 \ldots V_j$, with $a \in \CI([M: Y])$, $V_1, V_2, 
\ldots, V_j \in \maV$, and $0 \le j \le m$. Moreover, if $D \in \Diff_{\maV}^m(M)$,
then $r_Y^m D \in \Diff_{\maWint}^m([M: Y])$ and, if $D$ is elliptic in 
$\Diff_{\maV}^m(M)$, then $r_Y^m D$ is elliptic in $\Diff_{\maWint}^m([M: Y])$.
\end{itemize}
The last two statements are also obtained using that $V (r_Y) \in \CI([M: Y])$ for all 
$V \in \maV$. (We note in passing that $V (r_Y) \notin \CI(M)$.) 
\end{remark}

As in Remark \ref{rem.boundary.Lie}, we can iterate the construction of 
Remark \ref{rem.interior.Lie}, to obtain similarly a Lie manifold structure on $[M: \maY]$,
where $\maY$ is a clean semilattice (\ie a cleanly intersecting semilattice)
of \psubmanifolds\ of $M$. All the resulting
objects on $[M: \maY]$ will be independent on the admissible order chosen on $\maY$.

\section{The Lie manifolds associated to the generalized $N$-body problem}\label{sec4}
We now investigate how the constructions of the previous section particularize to the
case of $N$-body problems. Let us recall our standing conventions that~$X$ is a finite 
dimensional, real vector space and that~$\maF$ is a finite semilattice of \emph{linear} 
subspaces of~$X$ satisfying $\{0\} \in \maF$ and $X \notin \maF$. (We stress, however,
that the assumptions $\{0\} \in \maF$ and $X \notin \maF$ are not essential, since
the general case of a finite semilattice $\maF$ easily reduces to this case.)

\subsection{Semilattices and blow-ups for the $N$-body problem}\label{subsec.semilat.b-up}
We begin by introducing the semilattices that we will use for the study of the 
$N$-body problem. These semilattices will then be used to introduce the associated 
blow-ups and Lie manifolds.

\subsubsection{The semilattices and the blow-ups}
\label{sec.two.bups}
We denote by $\{0\}$ the vector space  consisting of just $0 \in X$. 
We agree that $\SS_{\{0\}} = \emptyset$, and hence $\overline{\{0\}} = \{0\}$.
As in \cite{ACN, AMN1}, we shall consider the semilattices $\overline{\maF}
\coloneqq \{\oY \mid Y \in \maF\}$ and $\SS_{\maF} \coloneqq \{\SS_Y \mid Y \in \maF\}$,
recalled earlier in Equation \eqref{eq.def.semilattices}.
Note that $\emptyset \in \SS_\maF$ because $0 \in \maF$.  

\begin{proposition} \label{prop.N_clean}
The sets $\overline{\maF}$, $\SS_{\maF}$, $\SS_\maF \cup \overline{\maF}$
are \emph{clean semilattices} of \psubmanifolds\ of $\oX$.
\end{proposition}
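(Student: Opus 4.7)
The plan is to verify the three defining properties of a clean semilattice (stability under intersection, every element a closed \psbmanifold, pairwise clean intersection) for each of the three sets $\overline{\maF}$, $\SS_\maF$, and $\SS_\maF \cup \overline{\maF}$, reducing the cleanness to the already-stated Lemma~\ref{lem.clean}.

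First, I would verify that each element is a closed \psbmanifold\ of $\oX$. For $\oY$ with $Y\in\maF$, this is the content of Remark~\ref{rem.emb}: under the diffeomorphism $\Theta_n$, the inclusion $\oY \hookrightarrow \oX$ corresponds to the inclusion $\SS^p_1 \hookrightarrow \SS^n_1$ of a ``generalized orthant of the sphere,'' which is a standard \psbmanifold. For $\SS_Y$ with $Y \in \maF$ and $Y \neq \{0\}$, note that $\SS_Y \subset \SS_X \subset \oX$ is the sphere at infinity of $Y$ sitting inside the boundary hypersurface $\SS_X$ of $\oX$; via $\Theta_n$ it corresponds to $\SS^{p-1} \subset \SS^n_1$ (intersection with the boundary hyperface $\{y_0=0\}$), which is again a \psbmanifold. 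The case $\SS_{\{0\}}=\emptyset$ is trivially a \psbmanifold.

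Second, I would check the semilattice property. The key geometric identities are
\begin{equation*}
    \oY \cap \oZ \seq \overline{Y\cap Z}, \qquad
    \SS_Y \cap \SS_Z \seq \SS_{Y\cap Z}, \qquad
    \oY \cap \SS_Z \seq \SS_{Y\cap Z},
\end{equation*}
for $Y,Z \in \maF$, with the convention $\SS_{\{0\}}=\emptyset$. The inclusions ``$\supset$'' are immediate from $Y\cap Z \subset Y, Z$. For ``$\subset$'' in the first identity, a point in $\oY\cap\oZ$ either lies in $X$, where $\oY\cap\oZ \cap X = Y\cap Z$, or in $\SS_X$, where it corresponds to a ray $\RR_+ v$ with $v \in Y$ and $v \in Z$, hence $v \in Y\cap Z$. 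The remaining two identities follow by the same ray argument, using $\oY \cap \SS_X = \SS_Y$. Applying these identities to the three sets shows each is closed under intersection, using the hypothesis that $\maF$ itself is a semilattice.

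Third, the cleanness of each pair of elements is exactly the content of Lemma~\ref{lem.clean}, which asserts that all subsets of $\{\oY,\oZ,\SS_Y,\SS_Z\}$ intersect cleanly; for pairs from $\overline{\maF}$ or $\SS_\maF$ alone this is a special case, and for mixed pairs from $\SS_\maF \cup \overline{\maF}$ it is again covered.

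Since Lemma~\ref{lem.clean} does the heavy lifting (and is deferred to the appendix in the arxiv version), the remaining work is essentially bookkeeping. The only mildly subtle point is the treatment of $Y=\{0\}$: then $\SS_Y=\emptyset$, which must be handled consistently with the convention $\emptyset \in \SS_\maF$ and with the empty intersection cases in the semilattice identities above. This is the step most likely to require explicit attention, but presents no real obstacle.
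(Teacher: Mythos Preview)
Your proof is correct and follows essentially the same approach as the paper: the paper's proof simply invokes Lemma~\ref{lem.clean} (together with Remark~\ref{rem.clean.intersection}) and leaves the remaining verifications implicit, while you spell out the bookkeeping (p-submanifold property, semilattice identities, and the $Y=\{0\}$ convention) that the paper takes for granted.
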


\begin{proof}
In view of Remark \ref{rem.clean.intersection}, this result is obtained by 
a direct application of Lemma~\ref{lem.clean}.
\end{proof}

We fix an admissible order on $\SS_\maF$, but the constructions below do not 
depend on this choice. For instance, the \emph{Georgescu--Vasy space} 
\begin{equation}\label{eq.def.XGV}
   \XGV \ede [\oX : \SS_\maF]
\end{equation}
does not depend on the choice of an admissible order on $\SS_\maF$. (This space is the
same as the one introduced in \cite{AMN1} and in Equation \eqref{eq.blownup.spaces}.) 
We now introduce the Lie manifold structure on $\XGV$.

\begin{example}\label{ex.basic.LM.2}
Let $X$ act on $\oX$ by translations, as in Example ~\ref{ex.basic.LM.1}.
Let also $\maS$ be  a finite, clean semilattice of closed submanifolds of 
$\SS_X = \oX \smallsetminus X$. The Lie group action of~$X$ on~$\oX$ 
acts trivially on $\SS_X$, thus it preserves 
$\maS$, and therefore we obtain an action of $X$ on the blown-up space $[\oX:\maS]$
(where the blow-up is defined using any admissible order on~$\maS$). 
As explained in Lemma~\ref{lemma.vect.GV.extend}, 
this implies that any constant vector field on~$X$ extends to a smooth vector field 
on $M \coloneqq [\oX:\maS]$ that is \emph{tangent} to the boundary. Let 
$\maWb \coloneqq  \CI(M) X$.

As in Example ~\ref{ex.basic.LM.1}, then 
$(M; \maWb)$ is a Lie manifold (Definition~\ref{def.Lie.Man}). Indeed, again, 
$\maWb$ is a free $\CI(M)$--module, a basis being given by extensions of the canonical 
basis, viewed as constant vector fields. In this case, the resulting algebra 
of differential operators $\Diff_{\maWb}(M)$ is the algebra of differential operators 
on~$X$ with coefficients in $\CI(M)$. So the blow-up of $\oX$ by $\maS$ leads to 
a larger algebra of differential operators than the one associated with the
Lie algebroid $(\oX, \maV)$, $\maV \coloneqq \CI(\oX)X$, considered in 
Example~\ref{ex.basic.LM.1}. The class of differential operators to which our
regularity results apply, however, is even larger than $\Diff_{\maWb}(M)$
(it allows also for interior blow-ups).

For us, the relevant choice is $\maS = \SS_\maF$, which yields $[\oX: \SS_\maF] =: 
\XGV .$ (Note that $\XGV$ was defined using only boundary blow-ups.) We thus have 
obtained a \emph{Lie manifold structure on the Georgescu--Vasy space 
$\XGV$.} This class of examples appears implicitly (but prominently) in Georgescu's 
work \cite{Georgescu2018, GeIf06}.
\end{example}

We now proceed to further blow-up $\XGV$ with respect to $\ol\maF$, \ie the lifts of
elements of~$\maF$.

\begin{lemma}\label{lem.def.XF}
Let us arrange both $\SS_{\maF}$ and $\overline{\maF}$ in admissible orders
(see Definition~\ref{def.admissible}). We then use the resulting order
to obtain an order relation on $\SS_{\maF} \cup \overline{\maF}$ in which
all the elements of $\SS_{\maF}$ precede the ones of $\overline{\maF}$.
Then the resulting order on the union $\SS_{\maF} \cup \overline{\maF}$
is an admissible order.
\end{lemma}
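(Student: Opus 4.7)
The plan is to verify Definition~\ref{def.admissible} inductively along the concatenation order, relying on Proposition~\ref{prop.N_clean} (clean semilattice structure of $\SS_\maF\cup\overline\maF$) and Remark~\ref{rem.ex.ao}(iv) together with \cite[Theorem~2.8]{ACN}, which guarantee that the \psbmanifold{}/clean-semilattice part of the admissibility condition is inherited at every stage. Thus the only nontrivial thing to check at each stage is the minimality condition: the next element has no strictly smaller element among those still to be blown up.

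Phase~1. I would first check inductively, along the chosen admissible order on $\SS_\maF$, that at stage $j$ the next element $\SS_{Y_j}$ (or rather its lift to the current iterated blow-up) is minimal for strict inclusion among all remaining lifts of elements of $\SS_\maF\cup\overline\maF$. Minimality among the remaining lifts of $\SS_\maF$ holds by assumption. For a remaining $\overline{Z}\in\overline\maF$, the point is that $\overline{Z}\supset Z\ni 0$, so $\overline{Z}$ contains points of the open part $X\subset\oX$, whereas $\SS_{Y_j}\subset\SS_X$. Since all previous blow-ups are along submanifolds of $\SS_X$, the blow-down map is a diffeomorphism over $X$, so the lift of $\overline{Z}$ still contains these interior points and cannot be strictly contained in the lift of $\SS_{Y_j}$ (which sits in the boundary of the current blow-up).

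Phase~2. After Phase~1 we are in $\XGV$, and the remaining elements form $\wha\maF$, a clean semilattice of closed \psbmanifolds{} of $\XGV$ by the same lifting result. I would then show that the order induced on $\wha\maF$ by the admissible order on $\overline\maF$ is admissible in $\XGV$. The key observation is that taking lifts under $\XGV\to\oX$ preserves and reflects strict inclusion: by Proposition~\ref{prop.beta.m1} (applied iteratively), $\wha{\overline{Z}}$ is the closure of the preimage of $\overline{Z}\setminus\unionma{S}_\maF$, and both the blow-up and the blow-down are diffeomorphisms over $X$, so $\overline{Z_1}\subsetneqq\overline{Z_2}$ in $\oX$ if and only if $\wha{\overline{Z_1}}\subsetneqq\wha{\overline{Z_2}}$ in $\XGV$. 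Hence the combinatorial minimality condition transfers verbatim from $\overline\maF$ to $\wha\maF$.

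The main obstacle I anticipate is the Phase~2 step: admissibility is defined through iterated pull-backs starting from a \emph{specific} ambient manifold, so I must confirm that the pull-backs computed inductively within $\XGV$ agree with those obtained by composing with the full blow-down to $\oX$. The cleanest way to handle this is to invoke Lemma~\ref{lemma.cancellation1} (the factorization lemma for iterated blow-ups $Q\subset P\subset M$) together with Theorem~\ref{theorem.mainAMN1} (independence of the iterated blow-up on the admissible order, via the graph blow-up), which lets me identify the inductive pull-backs in $\XGV$ with the iterated pull-backs taken along the concatenated order in $\oX$, and thereby reduce admissibility in $\XGV$ to the given admissibility of the order on $\overline\maF$ in $\oX$.
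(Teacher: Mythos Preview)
Your Phase~1 argument is exactly the paper's proof: the only point the paper makes explicit is that at any intermediate stage $[\oX:\maP]$ with $\maP\subsetneq\SS_\maF$, the lift of $\overline{Y}$ still contains the interior set $Y\subset X$, hence cannot sit inside the lift of any remaining $\SS_{Y_j}\subset\pa\oX$. So for Phase~1 you and the paper agree verbatim.

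Your Phase~2 discussion is more careful than the paper, which does not spell this step out at all. Your strategy is correct, but the machinery you propose (Lemma~\ref{lemma.cancellation1} and Theorem~\ref{theorem.mainAMN1}) is heavier than needed. The same ``interior points'' idea that drives Phase~1 already settles Phase~2: at every stage of the iterated blow-up (whether you start from $\oX$ or from $\XGV$), the current space contains the open dense set $X\smallsetminus\bigcup_{j\le t}Y_j$, and the lift of each remaining $\overline{Y_i}$ (resp.\ $\wha{Y_i}$) intersects this set precisely in $Y_i\smallsetminus\bigcup_{j\le t}Y_j$. Since each $Y_i$ is the closure of this intersection (the removed $Y_j$'s meet $Y_i$ in proper subspaces, hence nowhere dense subsets), strict inclusion among the lifts is equivalent, at every stage, to strict inclusion among the linear subspaces $Y_i$ themselves. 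Thus an admissible order on $\overline\maF$ in $\oX$ is exactly a linear extension of the inclusion order on $\maF$, and the same linear extension gives an admissible order on $\wha\maF$ in $\XGV$. No factorization lemma or graph blow-up comparison is required.
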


\begin{proof}
Let $\overline Y \in \overline{\maF}$,
where $Y \in \maF$. Let also $\maP \subset \SS_{\maF}$, $\maP \neq \SS_{\maF}$, 
and $\beta : [\oX : \maP] \to \oX$ be the blow-down
map. Then $\beta^*(\overline{Y})$ will contain $Y$ and hence cannot
be a minimal element of $(\SS_{\maF} \cup \overline{\maF}) \smallsetminus \maP$
since it is not contained in any element of $\SS_{\maF} \smallsetminus \maP$.
\end{proof}

We shall need for the following result 
the following notation for the lifts of $\maF$ to $\XGV$.
Let $\betaGV : \XGV \coloneqq [\oX : \SS_{\maF}] \to \oX$ 
be the blow-down map. If $Y \in \maF$, we let
\begin{equation}\label{eq.def.lift.hatF}
\begin{gathered}
    \wha Y \ede \betaGV^*\overline{Y} \subset \XGV \quad \mbox{ and}\\
    \wha \maF \ede  \{ \wha Y \mid\, \ Y \in \maF\} \seq \betaGV^*\overline{\maF}\,.
\end{gathered}
\end{equation}

(In general, later on, we shall denote by $\wha Y$ the lift of $\oY$ to any
intermediate blow-up between $\XGV := [X : \SS_\maF]$ and 
$[\oX : \SS_{\maF} \cup \overline{\maF}]$.)

The independence of the blow-up on the admissible order \cite{AMN1} (a
result reminded in Theorem \ref{theorem.mainAMN1}),
gives the following

%Putting all together we obtain:
\begin{corollary}\label{prop.def.XF}
We have canonical diffeomorphisms
\begin{equation*}
   X_{\maF} \ede [\oX : \SS_{\maF} \cup \overline{\maF}] \, \simeq \, 
   \bigl[[\oX: \SS_{\maF}]: \wha \maF \bigr] 
    \seq [\XGV: \wha{\maF}] \,,
\end{equation*} 
where $\wha{\maF}$ is the lift of $\overline{\maF}$ to $[\oX : \SS_{\maF}]$.
\end{corollary}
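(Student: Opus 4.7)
The plan is to read off the statement directly from Lemma~\ref{lem.def.XF} together with Theorem~\ref{theorem.mainAMN1} and the recursive definition of the iterated blow-up (Definition~\ref{def.iterated.bu}). First, I would note that by Proposition~\ref{prop.N_clean}, $\SS_\maF \cup \overline{\maF}$ is a clean semilattice of closed \psubmanifolds{} of $\oX$, so it admits at least one admissible order (Proposition~\ref{prop.ex.ao}). Hence $X_\maF := [\oX : \SS_\maF \cup \overline{\maF}]$ is well-defined, and by Theorem~\ref{theorem.mainAMN1} the result is independent (up to a canonical diffeomorphism extending the identity on $\oX \smallsetminus (\SS_\maF \cup \overline{\maF})$) of the choice of admissible order.

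Next, I would invoke Lemma~\ref{lem.def.XF}: fix admissible orders on $\SS_\maF$ and on $\overline{\maF}$ separately, and concatenate them so that all elements of $\SS_\maF$ come before all elements of $\overline{\maF}$. The lemma asserts that this concatenated order on $\SS_\maF \cup \overline{\maF}$ is admissible. Using this particular admissible order and unraveling Definition~\ref{def.iterated.bu}, the iterated blow-up $[\oX : \SS_\maF \cup \overline{\maF}]$ is, by construction, obtained by first performing the iterated blow-up along the initial segment $\SS_\maF$ (which by definition yields $\XGV = [\oX : \SS_\maF]$) and then performing the iterated blow-up along the pull-back of the remaining tail $\overline{\maF}$ under $\betaGV$. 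That pull-back is precisely $\wha{\maF} := \betaGV^*\overline{\maF}$ by \eqref{eq.def.lift.hatF}. This gives the canonical diffeomorphism
\begin{equation*}
   [\oX : \SS_\maF \cup \overline{\maF}] \simeq \bigl[[\oX : \SS_\maF] : \wha{\maF}\bigr] = [\XGV : \wha{\maF}]\,.
\end{equation*}

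The only point that requires a small verification is that the admissibility of the tail (\ie of $\wha{\maF}$ inside $\XGV$, with the order inherited from the chosen admissible order on $\overline{\maF}$) is automatic: this, however, is precisely what Lemma~\ref{lem.def.XF} gives us, since by the inductive clause~\eqref{item.eq.def.pbt2} of Definition~\ref{def.admissible}, the admissibility of the concatenation implies admissibility of each successive pull-back. I do not expect a genuine obstacle here; the content is entirely bookkeeping — matching the inductive definition of the iterated blow-up to the chosen order. Finally, the canonicity of the diffeomorphism (\ie its independence of the admissible orders chosen on $\SS_\maF$ and $\overline{\maF}$) follows from the uniqueness statement in Theorem~\ref{theorem.mainAMN1}, since any two such diffeomorphisms agree on the dense open subset $\oX \smallsetminus (\unionma{F} \cup \unionma{S_\maF})$.
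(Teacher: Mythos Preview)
Your proposal is correct and takes essentially the same approach as the paper: the paper simply states that the corollary follows from the independence of the iterated blow-up on the admissible order (Theorem~\ref{theorem.mainAMN1}), combined with Lemma~\ref{lem.def.XF}, which provides the specific admissible order placing all of $\SS_\maF$ before all of $\overline{\maF}$. Your write-up spells out the bookkeeping in more detail than the paper does, but the logical structure is identical.
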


\begin{remark} Let us assume that we have an order $\maF = (Y_0 = \{0\},
Y_1, \ldots, Y_k)$ on the linear subspaces of $\maF$ such that
$Y_i \subsetneq Y_j \ \Rightarrow i < j$ (that is, we have a ``size-order''
on $\maF$, in the sense of Kottke \cite{Kottke-Lin}). Then this is an admissible
order on $\maF$ and the induced orders
on $\SS_{\maF}$,  $\overline{\maF}$, and $\wha \maF$ are again admissible.
\end{remark}

So $X_{\maF}$ is obtained from $\XGV$ using only \emph{interior} blow-ups. By
contrast, in~\cite{ACN}, the process of blowing up along the collision
planes~$\overline{\maF}$ started from~$\oX$ to arrive at $[\oX: \overline{\maF}]$. 
This had the disadvantage that the smooth functions on $\overline{X/Y}$
do not lift to smooth functions on $[\oX: \overline{\maF}]$. 
Our paper thus fixes this issue from \cite{ACN} and provides uniform estimates at infinity.

\subsubsection{Boundary blow-ups and the data on $\XGV$}
Let us now see how the constructions of Lie manifolds and of their
geometric objects of the previous section particularize
to our setting, taking into account the two types of blow-ups: boundary
blow-up (along a \psubmanifold\ contained in the boundary) and
interior blow-up (along a \psubmanifold\ \textbf{not} contained in the boundary).
We begin with the blow-up of $\oX$ with respect to $\SS_\maF$, which,
we recall, is obtained using a sequence of boundary blow-ups.
We will let $\maWeucl$ denote the structural Lie algebra of vector fields on $\oX$.

\begin{remark} \label{rem.ex.GV} 
We will now define a Lie manifold structure on $\oX$, as well as on any of the 
intermediate blow-ups leading all the way up to $\XGV \coloneqq  [\oX: \SS_{\maF}]$.  
At first, the Lie manifold structure on $\oX$ is defined by the action of $X$ 
on itself and on $\ol X$ as described in Example \ref{ex.basic.LM.1}, namely, 
the structural Lie algebra of vector fields $\maWeucl$ on $\oX$ is
$\maWeucl \coloneqq \CI(\oX)X$. We have already described in \cite{AMN1} and
in then in the Introduction of this paper how to get from~$\ol X$ 
to~$\XGV$ by iterated blow-ups.
On any of these intermediate blow-ups~$\wihat X$ 
Theorem~\ref{theorem.mainAMN1} defines an action of~$X$ (as a Lie group) on~$\wihat X$ extending smoothly the translation action.
Deriving this Lie group action yields a Lie algebra action of~$X$ on $\wihat X$, given by a linear map 
$E:X\to \Gamma(\wihat X)$, which in fact provides smooth extensions of all constant 
vector fields on~$X$ to the compactification $\wihat X$. In general $E(X)$ will not 
vanish at the boundary, but it will be tangent to the boundary, see below for further 
discussion.
 
On all these blow-ups leading to the Georgescu--Vasy space $\XGV$, 
the Euclidean metric will be compatible with the Lie manifold 
structure on the (intermediate) blow-ups,
since we are performing only boundary blow-ups.

An alternative way to obtain the Lie manifold structure on all the iterated blow-ups~$\wihat X$, including $\XGV$ is to use Remark~\ref{rem.boundary.Lie} to define the 
Lie manifold structure inductively, starting from the Lie manifold structure on~$\ol X$.
It is easy to check by induction over the iterated blow-ups that this 
construction yields the same Lie manifold structures as above.

We stress that, as a consequence of  Remark~\ref{rem.boundary.Lie}, the resulting 
Sobolev spaces on all the blow-ups between~$\oX$ and $\XGV$ will be the same. Namely,
they are the \emph{usual Sobolev spaces} on~$X$.
Recall that we identify $X$ with 
constant vector fields on itself. The algebra of differential operators, however, will
change with each blow-up (along some $\SS_Y$, $Y \in \maF$), eventually leading on 
$[\oX: \SS_{\maF}]$ to the structural Lie algebra of vector fields 
$\maWb \coloneqq  \CI([\oX: \SS_{\maF}])X$. Therefore,
\begin{equation*}
    \Diff_{\maWb} (\XGV) \seq \CI(\XGV) \CC[X]\,,
\end{equation*}
where $\CC[X]$ denotes the algebra of polynomials on $X$ and where a monomial of degree~$d$
is interpreted as a differential operator of degree $d$ with constant coefficients.
\end{remark}

\subsubsection{Interior blow-ups}
As we have seen already, the situation becomes more complicated once
we start performing blow-ups along (lifts of) elements of $\overline{\maF}$.

\begin{remark}\label{rem.nb.Lie}
We use the notation introduced in Example~\ref{ex.basic.LM.2} and Lemma~\ref{lem.def.XF}. 
In particular, let $\wha\maF$ be the lifting of $\overline{\maF}$ to $\XGV$.
Let $\rho_{\wha\maF}$ be a smoothed distance function to $\wha{\maF}$ in $\XGV 
\coloneqq  [\oX: \SS_{\maF}]$.
By iterating Remark \ref{rem.interior.Lie}, we obtain a Lie manifold
structure on 
\begin{equation*}
    X_{\maF} \ede [\oX: \SS_\maF\cup \ol\maF] \seq \bigl[[\oX: \SS_\maF] : \wha{\maF}\bigr] 
\seq [\XGV: \wha{\maF}] 
\end{equation*}
(see Corollary~\ref{prop.def.XF}) by taking
\begin{equation*}
   \maW_{\maF} \ede \rho_{\wha{\maF}}\, \CI(X_\maF)  \maWint 
   \seq \rho_{\wha{\maF}}\, \CI(X_\maF)  \maWeucl \seq \rho_{\wha{\maF}}\, \CI(X_\maF) X\,.
\end{equation*}
Let us record here what are the various quantities associated with the Lie 
manifold $(X_\maF, \maW_{\maF})$, again by iterating Remark \ref{rem.interior.Lie}
(starting from $M \coloneqq  \XGV$ and successively blowing up with respect to
the lifts of $Y \in \maF$):
\begin{itemize}
\item The interior smooth manifold of the blow-up is this time: 
\begin{equation*}
   X_\maF \smallsetminus \pa (X_\maF) \seq X \smallsetminus \unionF \,.
\end{equation*}

\item If $g$ is a $\maWeucl$-compatible metric on $\XGV$, then $\tilde g\coloneqq 
\rho_{\wha{\maF}}^{-2}\beta^*g$ is a $\maW_{\maF}$-compatible metric on $X_{\maF} 
\coloneqq  [\XGV : \wha{\maF}]$. Here $\beta : X_{\maF} \to \XGV$ is the associated 
blow-down map.

%\red{V2himself: to introduce the vector fields for the $L^p$ spaces in what follows.}

\item The volume forms for $g$ and $\tilde g$ are related by the formula
      $\dvol^{\tilde g}= \rho_{\wha\maF}^{-n}\beta^*\dvol^g$,  $n=\dim (M) =\dim (X)$.
      As a consequence, we have
\begin{equation*}
  u \in L^p(X_\maF; \maW_\maF) \; \Longleftrightarrow \; 
  \rho_{\wha\maF}^{-n/p}u\in L^p(\XGV; \maWint) \; 
  \Longleftrightarrow \; \rho_{\wha\maF}^{-n/p}u\in L^p(X; \dvol_g) \,.
\end{equation*}

\item The Sobolev spaces defined by $\maW_{\maF}$ are ``weighted Sobolev spaces'' 
in the old metric~$g$ (compare to Equation \eqref{eq.def.Wkp} and notice the factor 
$\rho_{\wha{\maF}}^j$):
\begin{multline*} 
        W^{k,p}\bigl(X_\maF, \maW_{\maF}\bigr) \seq 
        \Bigl\{u \,\bigm|\; \rho_{\wha{\maF}}^{j} v_1 \ldots v_j
         u \in L^p\bigl(X_\maF; \maW_{\maF}\bigr)\,,\
        \forall v_1, \ldots, v_j \in X, \,
        0\le j \le k\,\Bigr\} \\
        = \Bigl\{u \,\bigm|\; \rho_{\wha{\maF}}^{j} V_1 \ldots V_j
         u \in L^p\bigl(X_\maF; \maW_{\maF}\bigr)\,,\
        \forall V_1, \ldots, V_j \in \maWint, \,
        0\le j \le k\,\Bigr\}\, .
\end{multline*}

\item $\Diff_{\maW_{\maF}}^m(X_\maF)$ is the linear span of 
differential monomials of the form $a \rho_{\wha{\maF}}^j V_1 V_2 \ldots V_j$, with 
$a \in \CI(X_\maF)$, $V_1, V_2, \ldots, V_j \in \maWeucl$, $0 \le j \le m$.
\end{itemize}

In order to obtain the last statement, we use, in particular, that $\rho_{\wha{\maF}}$ 
is the product of the smoothed distance functions to the blow-ups of $\wha Y$, 
where $\wha Y\in \wha\maF$, see Remark~\ref{rem.cond.ex.bis}. 
It is important to keep in mind that $\rho_{\wha{\maF}}$ is a smooth distance functions 
in $\XGV$ and that it is a product of smoothed distance functions in blow-ups of $\XGV$; 
the statements would be different for 
smoothed distance functions in $\oX$ and the corresponding blow-ups thereof. 
\end{remark}

\subsection{A lifting lemma and smoothed distance functions}
We continue to let $X$ be a finite-dimensional Euclidean vector space. Let $Y$ 
be a linear subspace of $X$. In this subsection, we prove a technical lemma (a 
lifting lemma) and apply it to the study of smoothed distance functions, leading 
to some results that will be used in the next subsection. Recall that the 
projection map $\pi_{X/Y}:X\to X/Y$ extends canonically to a smooth map 
$\psi_{Y} : [\ol{X}:\SS_Y]\to \overline{X/Y}$, see 
\citeschrSpace{Proposition}{prop.X/Y} and the proof of 
\cite[Theorem 4.1]{Kottke-Lin}. See also \cite{MelroseBook}.

\begin{lemma}\label{lemma.newPsiYZ}
There exists a diffeomorphism 
\begin{equation}\label{eq.make.a.lemma}
   \Psi_{Y} \seq (\psi_Y, \chi) :[\ol{X}:\SS_Y] \to
\overline{X/Y}\times \overline{Y}
\end{equation}
such that $\chi(y) = y$ if $y \in Y$ and $\beta_{\oX, \SS_Y}\circ \Psi_{Y}^{-1}$ 
restricts on $\oXY \times \SS_Y$ to the projection $\oXY \times \SS_Y \to \SS_Y$.
\end{lemma}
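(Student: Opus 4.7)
The plan is to construct $\Psi_Y$ explicitly using a rescaled orthogonal projection to $Y$, exploiting the already-given smooth extension $\psi_Y$ for the first component and verifying smoothness of the second component via local coordinates on $[\oX:\SS_Y]$.

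First, I would fix an orthogonal complement $Y^\perp \subset X$ of $Y$ with respect to the euclidean structure, which yields the canonical identification $\overline{X/Y} \cong \overline{Y^\perp}$ under which $\psi_Y$ corresponds on the interior to the orthogonal projection $p_{Y^\perp}: X \to Y^\perp$. Writing $x = y + z$ with $y = p_Y(x) \in Y$ and $z = p_{Y^\perp}(x) \in Y^\perp$, I define
\[
   \chi(x) \ede \frac{y}{\sqrt{1 + \|z\|^2}}
\]
on the interior $X \subset [\oX:\SS_Y]$. Then $\Psi_Y = (\psi_Y, \chi) : X \to Y^\perp \times Y$ is a smooth diffeomorphism of the interior (with inverse $(z, y') \mapsto (y'\sqrt{1 + \|z\|^2},\, z)$), and $\chi|_Y = \id_Y$ holds since $\|z\| = 0$ on $Y$. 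The rescaling factor is essential: the naive orthogonal projection $p_Y$ does not extend continuously to $[\oX:\SS_Y]$, because paths $(y, z)$ with $y$ fixed and $z \to \infty$ all converge to the same point of $\SS_{Y^\perp} \subset \SS_X$ (untouched by the blow-up) while $p_Y = y$ gives distinct limits; the factor $1/\sqrt{1+\|z\|^2}$ compresses these limits to the single value $0 \in Y$, which turns out to be the correct behavior for a diffeomorphism.

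The main step is to show that $\Psi_Y$ extends to a diffeomorphism $[\oX:\SS_Y] \to \overline{X/Y} \times \oY$. I would analyze the two boundary hypersurfaces separately. Near the blow-up face $\beta^{-1}(\SS_Y)$, I use the embedding $\Theta_n$ of \eqref{eq.def.theta} to set up local coordinates on $\oX$ around a point $v_0 \in \SS_Y$, and pass to polar-like coordinates $(r, \omega) \in [0, \infty) \times \overline{Y^\perp}$ on the blow-up (where $r$ is a defining function of the blow-up face). A direct computation generalizing the 2D model identity $(r, \theta) \mapsto (\tan\theta, 1/r)$ shows that $\Psi_Y$ extends smoothly up to and including the blow-up face, giving the required identification $\beta^{-1}(\SS_Y) \simeq \overline{X/Y} \times \SS_Y$ compatible with the fiber bundle projection to $\SS_Y$. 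On the lift of $\SS_X \setminus \SS_Y$ (unchanged by the blow-up), for a ray $\RR_+ v$ with $v = v_Y + v_{Y^\perp}$ and $v_{Y^\perp} \neq 0$, the limit
\[
   \Psi_Y(tv) \to \bigl(\RR_+ v_{Y^\perp},\ v_Y/\|v_{Y^\perp}\|\bigr) \in \SS_{X/Y} \times Y
\]
as $t \to \infty$ establishes a smooth bijection $\SS_X \setminus \SS_Y \to \SS_{X/Y} \times Y$.

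The main obstacle is verifying smoothness at the codimension-two corner $\SS_{X/Y} \times \SS_Y$ where the two boundary hypersurfaces meet: one must check that the two local descriptions glue smoothly. In the 2D model case, after the coordinate changes above this reduces to a simple swap relating polar coordinates on the blow-up side to inverse-linear coordinates on the product side---in corner-defining coordinates the map is essentially the identity---and the general case follows by analogous bookkeeping once coordinates adapted to both hypersurfaces are chosen. Once $\Psi_Y$ is known to be a diffeomorphism, the remaining requirements in the lemma are immediate: $\chi|_Y = \id_Y$ holds by construction, and $\beta_{\oX, \SS_Y} \circ \Psi_Y^{-1}(z, v_0) = v_0$ for every $(z, v_0) \in \overline{X/Y} \times \SS_Y$, because $\Psi_Y^{-1}(z, v_0)$ lies in the fiber $\beta^{-1}(v_0)$ by the fiber bundle identification of the blow-up face established above.
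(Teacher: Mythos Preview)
Your proposal is correct and, remarkably, your explicit formula $\chi(x) = y/\sqrt{1+\|z\|^2}$ is exactly the map the paper obtains (see the computation in Proposition~\ref{prop.details.newPsiYZ}, where applying $\Theta_p^{-1}$ to the second component yields $v_Y/\langle v_\perp\rangle$). The difference lies in how smoothness up to the boundary is established. The paper transports everything to the half-sphere via $\Theta_n$ and then invokes Melrose's sphere blow-up lemma (Lemma~\ref{lemma.blow.sphere}), which gives the diffeomorphism $[\SS^n_1:\SS^{p-1}]\simeq \SS^{n-p}_1\times \SS^p_1$ in one stroke; the corner analysis you flag as the ``main obstacle'' is thus absorbed into that black box. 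Your route instead verifies smoothness stratum by stratum via explicit polar-type coordinates, which is more hands-on and makes the geometry visible (your rescaling explanation is a nice piece of intuition the paper does not spell out), but it effectively reproves the sphere blow-up lemma in this particular case and requires the corner bookkeeping you mention. Both approaches are valid; the paper's is shorter because it reuses an existing result, while yours is more self-contained.
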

  
\begin{proof}
This follows from \citeschrSpace{Lemma}{blow.sphere} for $k = 1$ and $k' = 0$, 
see  Proposition~\ref{prop.details.newPsiYZ} in Appendix~\ref{app.spherical.compact} for details.
\end{proof}

\begin{remark}In order to compare the above well-known lemma
    to the literature, we mention that it says that the map $\psi_Y$ is the projection maps of a trivial fibration with fiber $\ol{Y}$.
In this context a smooth map $f:A\to B$ between manifolds with corners is 
the projection of a trivial fibration with fiber $C$, if and only if, $f$ is the 
first component of a diffeomorphism $A\to B\times C$. This is a special case of 
a fibration in the sense of \cite[Subsection~2.4]{MelroseBook}, a concept whose 
definition will not be recalled here, as it will not be required further.
\end{remark}

%\color{green}
%To extend the map $\Psi_Y$ of \eqref{eq.make.a.lemma} in a case where there is more than just one blow-up,
%we shall need a result that allow us to permutate the order of the blow-up.
%To this end, we recall the 
%%In particular, we need an order that does not respect the intersection order. We recall
%%thedefinition of
%intersection order introduced in \cite{Kottke-Lin}.
%\begin{definition}
%Let $X$ be a non-empty set and $\mathcal{S}$ be a finite semi-lattice of subset of $X$.
%An \emph{intersection order} on $\mathcal{S}$ is a total order on $\mathcal{S}$
%such that for every $P,Q \in \mathcal{S}$ with $R \ede P \cap Q$, 
%the set $R$ can not succeed at the same time $P$ and $Q$. That is, the only possible cases are
%\begin{equation*}
%1)\, R < P <Q, \quad 2)\,  R<Q<P, \quad 3)\, P<R<Q, \mbox{ and}\quad 4)\, Q<R<P\, .
%\end{equation*}
%\end{definition}
%
%
%The following Proposition correspond to Corollary 3.5 of~\cite{Kottke-Lin}.
%\begin{proposition}\label{prop.Kottke.order}
%Let $M$ be a manifold with corners and $\mathcal{S}$ be a clean semi-lattice of $p$-submanifolds of $M$.
%The space $[M:\mathcal{S}]$ is well defined and it does not depend on the order from which we made the blow-up as long as it is 
%an intersection order.
%\end{proposition}
%Note that in Proposition \ref{prop.Kottke.order}, if $Q \subset P \subset M$, the pull-back of $Q$ in $[M:P]$ is defined as $\beta_{[M:P]}^{-1}(Q)$.
%\color{black}

Continuing our idea to extend the map $\Psi_Y$ to a case with more than one blow-up,
let us introduce some notation. Let $Y \in \maF$ and 
choose an admissible order $(P_0 = \emptyset, P_1, \ldots, P_k)$ on $\SS_\maF$ 
(which is the same as choosing an admissible order
on $\maF$). According to the last part of~Proposition~\ref{prop.ex.ao}
we can choose this admissible order such that $\SS_Y=P_t$ for some $t\leq k$ and $\SS_{P_{j}}\subset \SS_Y$ for all $j< t$. Admissibility implies  $\SS_{P_{j}}\not\subset \SS_Y$ for $j>t$. Let
$\Psi_{Y}$ be the diffeomorphism defined in Lemma
\ref{lemma.newPsiYZ}. Let $\maP \coloneqq (P_0, P_1, \ldots, P_{t-1})$,
$\widehat{X} \coloneqq [\oX : (\maP, \SS_Y)]$.

\begin{corollary} \label{cor.last.step}
Let $\wha Y$ be the lift of $\overline{Y}$ to $\widehat{X} \coloneqq 
[\oX : (\maP, \SS_Y)]$. Using the notation we have just introduced, we have 
that the diffeomorphism $\Psi_{Y}:[\ol{X}:\SS_Y] \to
\overline{X/Y}\times \overline{Y}$ of Lemma~\ref{lemma.newPsiYZ}
lifts to a diffeomorphism 
\begin{equation}
    \widehat{\Psi}_{Y} : \widehat{X} \ede [\oX : (\maP, \SS_Y)] \rightarrow
    \ol {X/Y} \times [\ol Y: \maP] \,.
\end{equation}
This diffeomorphism maps $\widehat{Y}$ to $\{0_{X/Y}\} \times [\ol Y: \maP]$ 
diffeomorphically. In particular, a smoothed distance function to $\{0_{X/Y}\}$
in $\ol {X/Y}$ pulls back via
$\widehat{\Psi}_{Y} $ to 
%via the second projection,
a smoothed distance function to $\widehat{Y}$ in $\widehat{X}$.
\end{corollary}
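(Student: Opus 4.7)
The plan is to construct $\wha\Psi_Y$ by combining a rearrangement of the iterated blow-up (putting $\SS_Y$ first), the product identification $\Psi_Y$, and the compatibility of blow-ups with products (Proposition \ref{prop.lemma.product}). The key rearrangement to establish is the canonical diffeomorphism
\begin{equation*}
   [\oX : (\maP, \SS_Y)] \ \simeq \ \bigl[[\oX : \SS_Y] : (\beta^{-1}(P_j))_{0 \le j \le t-1}\bigr],
\end{equation*}
extending the identity on $\oX \setminus \SS_Y$, where $\beta = \beta_{\oX, \SS_Y}$ and each $\beta^{-1}(P_j) = \SS(N^{\oX}_+ \SS_Y)\bigm|_{P_j}$ is a closed \psubmanifold\ of the new boundary hyperface of $[\oX : \SS_Y]$; the family $(\beta^{-1}(P_j))_j$ inherits a clean semilattice structure from $\maP$. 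I would prove this identification by induction on $t$: the base case $t=1$ is tautological, and the inductive step moves the final $\SS_Y$-blow-up past the $P_{t-1}$-blow-up via the commutativity of blow-ups for the nested pair $P_{t-1} \subsetneq \SS_Y$, replacing $P_{t-1}$ by $\beta^{-1}(P_{t-1})$ on the $\SS_Y$-blown-up side; this commutativity follows from a careful analysis of the canonical map furnished by Lemma \ref{lemma.cancellation1}.

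Granted this rearrangement, Lemma \ref{lemma.newPsiYZ} transports the right-hand side via $\Psi_Y$: since $\beta \circ \Psi_Y^{-1}$ restricts to the second projection on $\ol{X/Y} \times \SS_Y$, it sends $\beta^{-1}(P_j)$ to $\ol{X/Y} \times P_j \subset \ol{X/Y} \times \ol Y$. Then Proposition \ref{prop.lemma.product} yields
\begin{equation*}
   \bigl[\ol{X/Y} \times \ol Y : (\ol{X/Y} \times P_j)_j\bigr] \ \simeq \ \ol{X/Y} \times [\ol Y : \maP],
\end{equation*}
and the composition of these three diffeomorphisms is $\wha\Psi_Y$. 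By construction it restricts to $\Psi_Y$ on the dense open subset $X \setminus \SS_Y$ and intertwines the natural blow-down maps.

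For the image of $\wha Y$: Lemma \ref{lemma.newPsiYZ} gives $\chi|_Y = \id_Y$ and $\pi_{X/Y}|_Y \equiv 0$, so $\Psi_Y(\ol Y) = \{0_{X/Y}\} \times \ol Y$; since every $P_j \subset \ol Y$, successive lifts of $\ol Y$ through the blow-ups remain in the second factor, giving $\wha\Psi_Y(\wha Y) = \{0_{X/Y}\} \times [\ol Y : \maP]$. For the smoothed distance claim, a further application of Proposition \ref{prop.lemma.product} shows
\begin{equation*}
  \bigl[\ol{X/Y} \times [\ol Y : \maP] : \{0_{X/Y}\} \times [\ol Y : \maP]\bigr] \simeq [\ol{X/Y} : \{0_{X/Y}\}] \times [\ol Y : \maP];
\end{equation*}
thus the pullback of a smoothed distance function $r_0$ to $\{0_{X/Y}\}$ in $\ol{X/Y}$ under the first-factor projection is a boundary defining function for the exceptional hyperface on the right, hence a smoothed distance function to $\{0_{X/Y}\} \times [\ol Y : \maP]$, and pulling back via $\wha\Psi_Y$ yields the desired smoothed distance function to $\wha Y$ in $\wha X$.

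The principal obstacle is the rearrangement above. The paper's pullback convention (Definition \ref{def.beta.pullback}) forces $\beta^*(P_j) = \emptyset$ whenever $P_j \subset \SS_Y$, so the swap cannot be obtained by naive reordering within the paper's iterated blow-up formalism. Instead, one must genuinely invoke the commutativity of blow-ups along nested \psubmanifold s and verify that the canonical smooth map from Lemma \ref{lemma.cancellation1} is in fact a diffeomorphism onto the iterated blow-up with respect to the preimage subsets $\beta^{-1}(P_j)$, for example by a local coordinate computation or by comparing with the graph blow-up description provided by Theorem \ref{theorem.mainAMN1}.
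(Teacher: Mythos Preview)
Your approach is essentially the same as the paper's: rearrange the blow-up order to put $\SS_Y$ first, transport via $\Psi_Y$, and then invoke Proposition~\ref{prop.lemma.product}; the treatment of $\wha Y$ and of the smoothed distance function is also the same. The one substantive difference concerns the rearrangement step $[\oX:(\maP,\SS_Y)]\simeq\bigl[[\oX:\SS_Y]:\beta^{-1}(\maP)\bigr]$. You correctly observe that the paper's pullback convention makes $\beta^*(P_j)=\emptyset$ for $P_j\subset\SS_Y$, so this is not a mere reordering within the paper's iterated blow-up formalism, and you propose to obtain it by induction on~$t$, swapping $\SS_Y$ past each $P_j$ via the nested-pair commutativity $[M:Q,P]\simeq\bigl[[M:P]:\beta_{M,P}^{-1}(Q)\bigr]$, to be verified from Lemma~\ref{lemma.cancellation1} by a local computation or via the graph blow-up. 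The paper instead bypasses this work entirely by noting that both $(\maP,\SS_Y)$ and $(\SS_Y,\maP)$ are \emph{intersection orders} in the sense of Kottke~\cite{Kottke-Lin} on the semilattice $\{P_0,\ldots,P_{t-1},\SS_Y\}$ (since $\SS_Y$ is its maximum) and invoking \cite[Corollary~3.5]{Kottke-Lin}, which gives the desired diffeomorphism directly. Your route is correct but more laborious; citing Kottke's result is the cleaner way to close this gap.
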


\begin{proof}
Let $\beta := \beta_{\oX, \SS_Y}$ and $\beta^{-1}(\maP) := (\beta^{-1}(P_0),
\beta^{-1}(P_1), \ldots, \beta^{-1}(P_{t-1}))$. (Notice that $\beta^{-1}(P_0) 
= \beta^{-1}(\emptyset) = \emptyset$.)
%In the following equation \eqref{eq.last.step1}, the
%diffeomorphism is an application of \cite[Corollary 3.5]{Kottke-Lin}. 
%More precisely,
Thanks to the properties of the family $\mathcal{F}$ and the fact that $\SS_Y$ is a maximum of $\{P_0, P_1, \ldots, P_t\}$, the two orders $(P_0, P_1, \ldots P_t= \SS_Y)$ 
and $(P_0,\SS_Y,P_1, \ldots P_{t-1})$
%\jeremy{
%\newline J2All: We required "only element contained in $\mathbb{S}_Y$ precede it, so there is only
%$\mathbb{S}_W$ with $W \subset Y$, neither vector spaces nor compactified vector spaces.
%$P_0$ can remain the first.}
are both ``intersection orders'' in the sense of~\cite{Kottke-Lin} for the semi-lattice 
$(P_i)_{i=1}^t$.
%(See the aforementioned corollary from \cite{Kottke-Lin} for more details on the intersection orders.)
Corollary~3.5 of~\cite{Kottke-Lin} gives the first diffeomorphism of the following equation
\begin{eqnarray*}
%\label{eq.last.step1}
   [\oX: (\maP, \SS_{Y}) ]  & \simeq & \bigl[[\oX: \SS_Y], \beta^{-1}(\maP) \bigr] \\
   & \simeq &[ \oXY \times \oY: \Psi_{Y}(\beta^{-1}(\maP)) ] \nonumber  \\
%   \label{eq.last.step2}
   & \simeq & [ \oXY \times \oY: \oXY \times \maP ] \nonumber \\
   & \simeq & \oXY \times [\oY: \maP ]   \nonumber
   \,.
\end{eqnarray*}
The second diffeomorphism of this equation is obtained by 
using the diffeomorphism $\Psi_{Y}$ of Lemma \ref{lemma.newPsiYZ}.
Using $P_j \subset \SS_Y$, Lemma~\ref{lemma.newPsiYZ} implies, 
$\Psi_{Y}(\beta^{-1}(P_j)) = \oXY \times P_j$, and hence $\Psi_{Y}(\beta^{-1}(\maP)) = \oXY \times \maP$. This provides the third diffeomorphism.
The last diffeomorphism follows then from Proposition~\ref{prop.lemma.product}. 

We now turn to the second statement of our result, namely, that
$\widehat{\Psi}_{Y}$ maps $\widehat{Y}$ to $\{0_{X/Y}\} \times [Y: \maP]$ 
diffeomorphically. By definition, the lift $\wha Y$ of $\overline{Y}$ to $\wha X$ is 
the closure of $\oY \smallsetminus \maP$ in $\wha Y$. In view of the 
diffeomorphism of the last equation and of Lemma \ref{lemma.newPsiYZ}, 
this lift is the closure of $\{0\} \times Y$ 
in $\oXY \times [\oY: \maP ]$, and hence diffeomorphic to $[\oY: \maP ]$.
(The diffeomorphism $\wha Y \simeq [\oY : \maP]$ also follows by
iterating Proposition \ref{prop.beta.m1}.)

The final statement about distances is then an immediate consequence of this 
diffeomorphism. (It is also a consequence of the more general result proved in
Proposition~\ref{prop.distance.product}.)
\end{proof}

We will use  Corollary \ref{cor.last.step} to define a canonical map $b : \XGV \coloneqq [\oX : \SS_\maF] 
\to \oXY$. This map coincides with the composite map considered also in \cite[Proposition~5.10]{AMN1}. Combining with Lemma 
\ref{lemma.disjoint.sd}, %and Corollary \ref{cor.last.step},
we obtain the following.

\begin{corollary}\label{cor.verylast.step}
Using the notation introduced in Corollary \ref{cor.last.step}, let $r_0$ be
a smoothed distance function to $0 \in \oXY$. Let $\betaGV : [\oX: \SS_\maF]
\to \oX$ be the blow-down map. Let $b$ be the composite map 
\begin{equation}
    \XGV \coloneqq [\oX: \SS_\maF] \rightarrow [\oX : (\maP, \SS_Y)] \rightarrow \oXY 
\end{equation} 
obtained from Corollary \ref{cor.last.step}. Then $r_0 \circ b$ is a smoothed 
distance function to $\betaGV^*(\oY) \subset [\oX: \SS_\maF]$ in $[\oX: \SS_\maF]$.
\end{corollary}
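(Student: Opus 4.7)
The plan is to factor $b$ through $\wha X$ and then apply Lemma \ref{lemma.disjoint.sd}. Because the admissible order on $\SS_\maF$ places $\SS_Y = P_t$ just after $\maP = (P_0, \ldots, P_{t-1})$, the space $\wha X = [\oX : (\maP, \SS_Y)]$ is an intermediate stage of $\XGV = [\oX : \SS_\maF]$; hence there is a canonical blow-down $\gamma \colon \XGV \to \wha X$ that presents $\XGV$ as the iterated blow-up of $\wha X$ along the lifts $\wha{P_{t+1}}, \ldots, \wha{P_k}$ (a blow-up-suitable tuple in $\wha X$, by admissibility). By construction $b = \psi \circ \gamma$, where $\psi \colon \wha X \to \oXY$ is the projection produced by $\wha\Psi_Y$; Corollary \ref{cor.last.step} tells us that $r \coloneqq r_0 \circ \psi$ is a smoothed distance function to $\wha Y$ in $\wha X$, so $r_0 \circ b = r \circ \gamma$.

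Granted the key claim that $\wha{P_j} \cap \wha Y = \emptyset$ in $\wha X$ for every $j > t$, Lemma \ref{lemma.disjoint.sd} (with $M = \wha X$, $P = \wha Y$, $\maQ = (\wha{P_{t+1}}, \ldots, \wha{P_k})$) immediately yields that $r \circ \gamma$ is a smoothed distance function to $\gamma^{-1}(\wha Y)$ in $\XGV$. Since $\betaGV = \alpha \circ \gamma$ (where $\alpha : \wha X \to \oX$) and $\wha Y = \alpha^*(\oY)$, and since the blow-ups carried out by $\gamma$ avoid $\wha Y$, we have $\gamma^{-1}(\wha Y) = \gamma^*(\wha Y) = \betaGV^*(\oY)$, completing the proof.

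The principal obstacle will be the disjointness claim. Write $P_j = \SS_{Z_j}$ with $Z_j \in \maF$ and set $W \coloneqq Y \cap Z_j \in \maF$, so that $P_j \cap \oY = \SS_W$. When $W \subsetneq Y$, the set $\SS_W \subsetneq \SS_Y$ lies in $\SS_\maF$, so it coincides with some $P_i$, $i < t$, by our choice of admissible order; the intersection $P_j \cap \oY = P_i$ is thereby blown away in forming $[\oX : \maP]$, and the proper transforms of $P_j$ and $\oY$ are disjoint there, a fortiori in $\wha X$. In the remaining case $Y \subseteq Z_j$, so $W = Y$ and $P_j \cap \oY = \SS_Y$, each $P_i \in \maP$ is contained in both $P_j$ and $\oY$; the proper transforms in $[\oX : \maP]$ are thus $[P_j : \maP]$ and $[\oY : \maP]$, meeting cleanly along $\SS_Y^{(t)} \coloneqq [\SS_Y : \maP]$. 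The clean intersection of $P_j$ and $\oY$ in $\oX$ from Proposition \ref{prop.N_clean} gives $T_x P_j \cap T_x \oY = T_x \SS_Y$ for $x \in \SS_Y$, a property preserved under the blow-up of $\maP$. Consequently, inside the normal bundle of $\SS_Y^{(t)}$ in $[\oX : \maP]$, the normal subbundles coming from the two proper transforms meet only along the zero section, so their inward-pointing spherical normal bundles are disjoint in the hyperface created by blowing up $\SS_Y^{(t)}$, yielding $\wha{P_j} \cap \wha Y = \emptyset$ in $\wha X$.
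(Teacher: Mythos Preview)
Your argument is correct and follows exactly the paper's route: factor $b$ through $\wha X$, invoke Corollary~\ref{cor.last.step} to obtain a smoothed distance to $\wha Y$ there, observe that the remaining lifts $\wha{P_{t+1}},\ldots,\wha{P_k}$ are disjoint from $\wha Y$, and then apply Lemma~\ref{lemma.disjoint.sd}. The paper simply asserts this disjointness without proof, whereas you supply a case analysis; your Case~1 tacitly uses the same normal-bundle separation you spell out in Case~2, and one could also shortcut both cases by noting that under $\wha\Psi_Y:\wha X\to\oXY\times[\oY:\maP]$ the set $\wha Y$ projects to $\{0_{X/Y}\}$ while each $\wha{P_j}$ (being the closure of points in $\SS_X\setminus\SS_Y$) projects into $\SS_{X/Y}$.
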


\begin{proof}
This follows from the properties of the map $b$
and the aforementioned Lemma~\ref{lemma.disjoint.sd} and Corollary~\ref{cor.last.step}
as follows. Let $\maP$ and $\wha Y$ be as in Corollary \ref{cor.last.step}. The remaining spheres at infinity yield
$\maP_{\mathrm{rem}} \coloneqq \beta_{\oX, (\maP, \SS_Y)} (\SS_\maF \smallsetminus \{\maP, \SS_Y\})$ which 
consists of \psbmanifolds{} disjoint from $\wha Y$.
We first have that $r_0 \circ b$ is a distance function to $\wha Y$
in $\wha X \coloneqq [\oX : (\maP, \SS_Y)]$. Then this lifts to a
distance function to $\beta^* (\oY)$
in $\XGV \coloneqq \bigl[[\oX : (\maP, \SS_Y)]: \maP_{\mathrm{rem}}\bigr]$
by Lemma~\ref{lemma.disjoint.sd}.
\end{proof}

\subsection{More on smooth distance functions on blow-ups}
As before let $\betaGV :\XGV\coloneqq [\oX: \SS_\maF]
  \to \oX$ be the blow-down map from the Georgescu--Vasy compactification
  to the ball compactification. Again for $Y \in \maF$, we write $\betaGV^*(Y)$ for the lift of $\overline{Y}\subset\overline X$ to $[\oX: \SS_{\maF}]$, i.e.\ it 
is the closure of $Y$ in $[\oX: \SS_{\maF}]$.  In this subsection we use the notation $\wha{Y}\coloneqq \betaGV^*(Y)$. (Note that now $\wha{Y}$ only has this specific meaning, whereas it denoted a more general class of lifts previously.)

The goal of the current subsection is to compare a smoothed distance to $\wha{Y}$ in $\XGV$ to the Euclidean distance to $Y$. More precisely, we show in the following lemma that functions similar to $\arctan(d_Y):X\to \mathbb{R}$, where $d_Y$ is the Euclidean distance function to~$X$,
extend to a smoothed distance function to $\wha{Y}$ in $\XGV$. This is mostly a question about the asymptotic of $d_Y$ at infinity. 

The reader should be aware that a smoothed distance
to $\oY$ in $\oX$ will not pull-back to a smoothed distance function to $\wha{Y}$ in $\XGV$, 
thus it is important to keep in mind with respect to which of the two compactifications,
$\ol{X}$ or $\wha X$, the smoothed distance functions is defined. Again, we let
$\wha \maF \coloneqq \betaGV^*(\overline{\maF})
=\{\wha Y\mid Y\in \maF\}$ and recall from Corollary~\ref{prop.def.XF} 
that $[\XGV: \wha{\maF}] = [\oX: \SS_{\maF}\cup \ol\maF] =: X_{\maF}$.

\begin{lemma}\label{lemma.dY}
Let $\phi_0 : [0, \infty) \to [0, 1]$ be a smooth, non-decreasing function 
satisfying $\phi_0(t) = t$ for $t \le 1/2$ and $\phi_0(t)=1$ for $t\geq 1$. 
Let $Y \in \maF$ and $d_Y(x) \coloneqq  \dist(x, Y)$ be the distance function 
from $x \in X$ to $Y$ with respect to the Euclidean distance on $X$,
as before. Then $\phi_0 \circ d_Y$ extends to a smoothed distance function
$r_{\wha{Y}}$ to $\wha{Y}$ in $\XGV \coloneqq [\oX: \SS_\maF]$ such that 
$r_{\wha{Y}}/d_Y$ extends to a smooth function on $\XGV$.
\end{lemma}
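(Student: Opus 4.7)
The plan is to reduce to Corollary~\ref{cor.verylast.step} by exhibiting $\phi_0 \circ d_Y$ as the pullback, via the canonical map $b : \XGV \to \oXY$ of that corollary, of a smoothed distance function to $\{0\}$ in $\oXY$. The key observation is that $b$ restricts on $X \subset \XGV$ to the linear projection $\pi_{X/Y}$, so that $d_Y(x) = \|\pi_{X/Y}(x)\|$ equals $\|\argu\| \circ b$ on $X$.

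For the first assertion, I would define $r_0 : \oXY \to [0,1]$ by $r_0(z) = \phi_0(\|z\|)$ for $z \in X/Y$ and $r_0 \equiv 1$ on $\SS_{X/Y}$. Since $\phi_0 \equiv 1$ on $[1,\infty)$, the function $r_0$ is smooth on $\oXY \setminus \{0\}$, being constantly $1$ in a neighborhood of $\SS_{X/Y}$. Using polar coordinates at $0 \in X/Y$, its lift to $[\oXY : \{0\}]$ is smooth, vanishes exactly on the new front face, and has nonvanishing derivative there (because $\phi_0(t) = t$ near $t = 0$). Hence $r_0$ is a smoothed distance function to $\{0\}$ in $\oXY$, and Corollary~\ref{cor.verylast.step} yields that $r_{\wha Y} \coloneqq r_0 \circ b$ is a smoothed distance function to $\wha Y$ in $\XGV$, whose restriction to $X$ is $\phi_0 \circ d_Y$.

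For the second assertion, I would introduce $\phi_1 : [0,\infty) \to [0,1]$ by $\phi_1(t) \coloneqq \phi_0(t)/t$ for $t > 0$ and $\phi_1(0) \coloneqq 1$. Since $\phi_0(t) = t$ near $0$ and $\phi_0(t) = 1$ for $t \geq 1$, the function $\phi_1$ is smooth on $[0,\infty)$, equals $1$ near $0$, and equals $1/t$ for $t \geq 1$. Define $g : \oXY \to [0,1]$ by $g(z) = \phi_1(\|z\|)$ on $X/Y$ and $g \equiv 0$ on $\SS_{X/Y}$. With the boundary defining function $s = (1+\|z\|^2)^{-1/2}$ for $\SS_{X/Y}$ in $\oXY$, one has $1/\|z\| = s/\sqrt{1-s^2}$, which is smooth in $s$ at $s=0$; hence $g$ extends smoothly across $\SS_{X/Y}$. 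Then $g \circ b \in \CI(\XGV)$, and on the dense subset $X \setminus Y$ it coincides with $\phi_1 \circ d_Y = \phi_0(d_Y)/d_Y = r_{\wha Y}/d_Y$, providing the required smooth extension.

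The main technical checkpoint is verifying the smooth extension of $r_0$ and $g$ across $\SS_{X/Y}$ in $\oXY$, which is accomplished by the explicit computation in the boundary defining function $s$; all remaining steps are formal consequences of Corollary~\ref{cor.verylast.step} together with the factorization $d_Y = \|\argu\| \circ \pi_{X/Y}$ and the density of $X$ in $\XGV$.
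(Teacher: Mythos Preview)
Your proposal is correct and follows essentially the same approach as the paper: define $r_0(z) = \phi_0(\|z\|)$ on $\oXY$, invoke Corollary~\ref{cor.verylast.step} to obtain $r_{\wha Y} = r_0 \circ b$, and then observe that $q(z) = \phi_0(\|z\|)/\|z\|$ extends smoothly to $\oXY$ so that $q \circ b$ gives the smooth extension of $r_{\wha Y}/d_Y$. The only difference is that you spell out the verifications (that $r_0$ is a smoothed distance function to $0$ and that $q$ is smooth across $\SS_{X/Y}$) which the paper simply asserts.
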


It follows trivially from the lemma that $r_{\wha{Y}}/d_Y$ extends to a smooth 
function on $X_{\maF} \coloneqq [\XGV: \wha{\maF}]$.

\begin{proof}
Let $r_0(z) \coloneqq \phi_0(|z|) = \phi_0(\dist(z, 0))$, where $z \in X/Y$.
Then $r_0$ extends to a smooth distance function to 0 in $\oXY$. Let $b$
be the map of Corollary \ref{cor.verylast.step}. Then that corollary
gives that $r_{\wha Y} =  r_0 \circ b$ is a smoothed distance function to $\wha Y$ 
in $\XGV$ that coincides with $\phi_0 \circ d_Y$ on $X$.

Next, the function $q(z) \coloneqq r_0(z)/|z| = \phi_0(|z|)/|z|$ extends
to a smooth function on $\oXY$. Hence $q \circ b$ is smooth on $\XGV$. 
This function is the desired smooth extension of  $r_{\wha{Y}}/d_Y$.
\end{proof}

\begin{remark}
Note that $d_Y|_X$ is a smoothed distance function to $Y$ in $X$, but \emph{not in} 
$\oX$ as it does not extend to a continuous (or smooth) function 
$\oX\to [0,\infty)$.
\end{remark}

The purpose of the framework developed in Subsection~\ref{ssec.s.dist} was to prove 
the following result.

\begin{proposition} \label{prop-smoothing-pot}
Let $Y \in \maF$ and $d_Y(x) \coloneqq  \dist(x, Y) = \inf \bigl\{\|x-y\|\mid y\in Y\bigr\}$ 
be the Euclidean distance function from $x \in X$ to $Y$ and $\wha{\maF}$
be the lift of $\overline{\maF}$ to $\XGV$, as before. Let $\rho_{\wha{\maF}} : 
\XGV \to [0, \infty)$ be a \emph{smoothed} distance function to $\wha{\maF}$ in $\XGV$. 
Then  $\rho_{\wha{\maF}}\, d_Y^{-1}:X\setminus Y\to [0,\infty)$ extends smoothly to a function in 
$\CI\bigl([\XGV:\wha \maF]\bigr) \seq \CI(X_\maF)$, denoted again by  $\rho_{\wha{\maF}}\, d_Y^{-1}$.
\end{proposition}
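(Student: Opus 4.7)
The plan is to reduce the statement to two previously established results by factoring
\begin{equation*}
  \frac{\rho_{\wha{\maF}}}{d_Y} \seq \frac{\rho_{\wha{\maF}}}{r_{\wha Y}} \cdot \frac{r_{\wha Y}}{d_Y}\,,
\end{equation*}
where $r_{\wha Y}$ is a carefully chosen smoothed distance function to $\wha Y$ in $\XGV$. The first factor will be handled via the key technical result Proposition~\ref{prop.smoothness} applied on the Lie manifold $\XGV$, while the second factor is handled by the comparison Lemma~\ref{lemma.dY}.

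First, I would apply Lemma~\ref{lemma.dY} to obtain a smoothed distance function $r_{\wha Y} : \XGV \to [0,1]$ to $\wha Y$ in $\XGV$ such that $r_{\wha Y}/d_Y$ extends to a smooth (everywhere positive) function on $\XGV$; pulling back along the blow-down map $\beta : X_\maF \to \XGV$, this extension defines a smooth function on $X_\maF$ as well.

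Next, I would verify that $\wha{\maF}$ is a clean semilattice of closed \psbmanifolds{} of $\XGV$. This is essentially a citation: Proposition~\ref{prop.N_clean} shows that $\SS_\maF \cup \overline{\maF}$ is a clean semilattice of closed \psbmanifolds{} of $\oX$, and the stability of cleanness under blow-ups with respect to a minimal element (used inductively as we blow up along $\SS_\maF$) follows from \cite[Theorem~2.8]{ACN}, together with the fact that the blow-ups leading from $\oX$ to $\XGV$ are along boundary \psbmanifolds{} (so the lifts of $\overline{\maF}$ remain closed \psbmanifolds{} of the boundary blow-ups, see Remark~\ref{rem.boundary.Lie}). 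Then I would apply Proposition~\ref{prop.smoothness} with $M = \XGV$, $\maS = \wha{\maF}$, and $Y$ replaced by $\wha Y \in \wha{\maF}$, using the just-chosen $r_{\wha Y}$ and the given $\rho_{\wha{\maF}}$; the conclusion is that $\rho_{\wha{\maF}}/r_{\wha Y}$ extends to a smooth function on $[\XGV : \wha{\maF}] = X_\maF$.

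Combining the two extensions: on the dense open set $X \setminus \unionF \subset X_\maF$, both $r_{\wha Y}/d_Y$ and $\rho_{\wha{\maF}}/r_{\wha Y}$ are well-defined smooth positive functions, and their product equals $\rho_{\wha{\maF}}/d_Y$. Since each factor extends smoothly to $X_\maF$, so does the product, which gives the claim. The only genuinely nontrivial ingredient is Proposition~\ref{prop.smoothness}, so there is no real obstacle once that proposition is in hand; the main subtlety to watch is that $r_{\wha Y}$ must be a smoothed distance function \emph{in $\XGV$} (not in $\oX$), which is exactly what Lemma~\ref{lemma.dY} provides and what is required for the hypotheses of Proposition~\ref{prop.smoothness} to apply cleanly.
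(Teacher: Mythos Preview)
Your proof is correct and follows essentially the same route as the paper: factor $\rho_{\wha{\maF}}/d_Y = (\rho_{\wha{\maF}}/r_{\wha Y}) \cdot (r_{\wha Y}/d_Y)$, invoke Lemma~\ref{lemma.dY} for the second factor and Proposition~\ref{prop.smoothness} (with $M=\XGV$, $\maS=\wha\maF$) for the first. Your extra care in justifying that $\wha\maF$ is a clean semilattice and in flagging the subtlety that $r_{\wha Y}$ must be a smoothed distance in $\XGV$ rather than in $\oX$ is welcome but not strictly needed beyond what the paper already records.
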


\begin{proof}  
We use the notation introduced in Lemma~\ref{lemma.dY}, in particular, 
$r_{\wha Y} \coloneqq  r_0 \circ b$ extends $\phi_0 \circ d_Y$. 
We thus have that $r_{\wha Y}/d_Y \in \CI(\XGV)\subset \CI(X_\maF)$, by 
Lemma~\ref{lemma.dY}. Moreover, $r_{\wha Y}$ is a smoothed distance to $\wha Y$ in 
$\XGV=[\oX : \SS_{\maF}]$, by the same lemma. 
Hence $\rho_{\wha{\maF}}/r_{\wha Y}$ extends to a smooth function on 
$\bigl[[\oX: \SS_{\maF}] : \wha{\maF}\bigr] 
= [\oX : \SS_\maF\cup \ol\maF]=X_\maF$, by Proposition \ref{prop.smoothness} for 
$M = [\oX: \SS_{\maF}]$ and $\maS=\wha\maF$. Hence
\begin{equation*}
   \rho_{\wha{\maF}}\, d_Y^{-1} \seq \frac{\rho_{\wha{\maF}}}{r_{\wha Y}} 
   \cdot \frac{r_{\wha Y}}{d_Y} \in \CI(X_\maF)\,.
\end{equation*}
The proof is complete.
\end{proof}

We fix in what follows $\rho_{\wha{\maF}}$ to be a smoothed distance to 
$\wha{\maF}$ in $[\oX: \SS_{\maF}]$. Our constructions and reasoning will 
not depend on the particular choice of $\rho_{\wha{\maF}}$.

%\bernd{Bernd2himself: Don't we need a corresponding statement with $d_Y$ replaced by the distance to $\unionF$. Namely that $\rho_{\wha{\maF}}\, d_{\unionF}^{-1}:X\setminus Y\to [0,\infty)$ extends to a continuous function bounded from below and above. --- See Appendix A!!}

\section{Regularity results for eigenfunctions}\label{sec5}
We now formulate and prove our main regularity results for certain differential operators 
with singular coefficients on $X$. These results apply, in particular,
to Schr\"odinger operators with 
``inverse square potentials'', a class of potentials which will be defined below and that includes the classical Schrödinger operator with Coulomb potential, which are used in physics and chemistry. The more general class of Schr\"odinger operators with inverse square potentials became  of renewed interest
\cite{DerezinskiFaupin, DerezinskiRichard, HLNU1, HLNU2, LiNi09}. 
To summarize our approach, we use the method in \cite{ACN}, but starting with
$\XGV$ instead of $\oX$. This improvement leads to regularity statements which are even 
uniform close to infinity.
  The Lie manifold structure on $\XGV$ is obtained
from the action of $X$, as explained in Example \ref{ex.basic.LM.2}. (See also 
Remarks~\ref{rem.boundary.Lie} and~\ref{rem.ex.GV}.)

Recall that, throughout this paper, $\maF$ is a finite semilattice of linear subspaces of $X$
with $0  \in \maF$, $X \notin \maF$, and $\SS_\maF \cup \ol\maF$ 
is the associated clean semilattice, as in Equation \eqref{eq.def.semilattices}. There is no loss
of generality to assume that $X = \RR^n$, when convenient. As before, we let 
\begin{itemize}
\item $\XGV \coloneqq  [X: \SS_\maF]$, the Georgescu--Vasy space; 
\item $X_{\maF} \coloneqq  [\oX :  \SS_\maF\cup \ol\maF] = \bigl[[X : \SS_\maF] : \wha{\maF}\bigr]$; 
the blow-up along $\SS_\maF\cup \ol\maF$; 
\item $\rho_{\wha{\maF}} : \XGV \to [0, \infty)$, a smoothed distance 
to $\wha{\maF}$ in $\XGV$, see Definition~\ref{def.rho_maS};
\item $\maW_{\maF} \coloneqq  \rho_{\wha{\maF}} \, \CI(X_{\maF})X$,
the structural Lie algebra of vector fields 
on~$X_\maF$, discussed in more detail in Remark \ref{rem.nb.Lie}.
\end{itemize}

\begin{theorem}\label{theorem.regularity} 
We use the usual notation, recalled for instance in the last paragraph.
Let $m \in \NN$ and $D_0, D_1, \ldots, D_m$
be differential operators on $X$ with coefficients in $\CI(X_\maF)$, $D_k$
of order $\le k$, and
$D \coloneqq  D_m + \rho_{\wha{\maF}}^{-1}D_{m-1} + \ldots + \rho_{\wha{\maF}}^{-m}D_0$. 
Then 
\begin{enumerate}[{\rm (i)}]
\item  $\rho_{\wha{\maF}}^m D \in \Diff_{\maW_{\maF}}^m(X_{\maF})$. 

\item If $D_m$ is uniformly elliptic on $X$, then $\rho_{\wha{\maF}}^m D$
is elliptic in $\Diff_{\maW_{\maF}}^m(X_{\maF})$. 

\item For each boundary hyperface $H$ of $X_\maF$, 
let $x_H$ be a defining function and $\mu_H \in \RR$.
Let $\chi \coloneqq  \prod_H x_H^{\mu_H}$ and $1<p<\infty$. We assume that  
$\rho_{\wha{\maF}}^m D$ is elliptic in $\Diff_{\maW_{\maF}}^m(X_{\maF})$ and that
$u \in \chi L^p(X_{\maF})$ satisfies $D u= \lambda u$ on $X_{\maF} \smallsetminus 
\pa X_{\maF} \subset X$ for some $\lambda \in \RR$. Then
$u \in \chi W^{\ell,p}(X_{\maF}, \maW_{\maF})$ for all $\ell \in \NN$.
\end{enumerate}
\end{theorem}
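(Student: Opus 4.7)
My plan is to recast the eigenvalue equation as an elliptic PDE on the Lie manifold $(X_\maF,\maW_\maF)$ and then invoke Theorem~\ref{theorem.ain}. For (i), starting from $\rho_{\wha\maF}^m D=\sum_{k=0}^m \rho_{\wha\maF}^k D_k$ and expanding each $D_k=\sum_{|\alpha|\le k}a_\alpha \partial^\alpha$ with $a_\alpha\in \CI(X_\maF)$, I would write $\rho_{\wha\maF}^k \partial^\alpha=\rho_{\wha\maF}^{k-|\alpha|}(\rho_{\wha\maF}\partial_{\alpha_1})\cdots (\rho_{\wha\maF}\partial_{\alpha_{|\alpha|}})$ modulo lower-order terms arising from the commutators $(\rho_{\wha\maF}\partial_i)\rho_{\wha\maF}=\rho_{\wha\maF}(\partial_i\rho_{\wha\maF})+\rho_{\wha\maF}^2\partial_i$. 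By the description of $\Diff_{\maW_\maF}^m(X_\maF)$ in Remark~\ref{rem.nb.Lie}, each such monomial belongs to $\Diff_{\maW_\maF}^{|\alpha|}(X_\maF)\subset \Diff_{\maW_\maF}^m(X_\maF)$, giving (i). For (ii), only $\rho_{\wha\maF}^m D_m$ contributes to the Lie-algebroid principal symbol in degree $m$; in a local frame dual to $\rho_{\wha\maF}\partial_i$, this principal symbol coincides with the Euclidean principal symbol of $D_m$ evaluated on the corresponding covectors, so the uniform Euclidean ellipticity of $D_m$ transfers to ellipticity of $\rho_{\wha\maF}^m D$ in $\Diff_{\maW_\maF}^m(X_\maF)$.

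For (iii), multiplying $Du=\lambda u$ by $\rho_{\wha\maF}^m$ converts the eigenvalue equation into $Pu=0$ with $P\coloneqq \rho_{\wha\maF}^m D-\lambda\rho_{\wha\maF}^m\in \Diff_{\maW_\maF}^m(X_\maF)$, still elliptic because the subtracted term lies in $\CI(X_\maF)\subset \Diff_{\maW_\maF}^0(X_\maF)$ and hence does not alter the order-$m$ principal symbol. Setting $v\coloneqq \chi^{-1}u$ turns this into $Qv=0$ with $Q\coloneqq \chi^{-1}P\chi$. Because every $V\in \maW_\maF$ is tangent to each boundary hyperface $H$ of $X_\maF$ (a basic axiom of a Lie manifold), we have $V(x_H)\in x_H\CI(X_\maF)$, whence $V(\log\chi)=\sum_H \mu_H V(x_H)/x_H\in \CI(X_\maF)$, and therefore $\chi^{-1}V\chi=V+V(\log\chi)\in \Diff_{\maW_\maF}^1(X_\maF)$. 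Consequently conjugation by $\chi$ preserves $\Diff_{\maW_\maF}^m(X_\maF)$ and fixes the principal symbol, so $Q$ is elliptic. After absorbing (if needed) the factor $\rho_{\wha\maF}^{n/p}$ relating Euclidean and Lie-manifold $L^p$ spaces into a suitable adjustment of the exponents $\mu_H$, the hypothesis $u\in \chi L^p(X_\maF)$ reads $v\in W^{0,p}(X_\maF,\maW_\maF)$. Since $Qv=0\in W^{j,p}(X_\maF,\maW_\maF)$ for every $j$, a single application of Theorem~\ref{theorem.ain} yields $v\in W^{\ell,p}(X_\maF,\maW_\maF)$ for all $\ell$, whence $u=\chi v\in \chi W^{\ell,p}(X_\maF,\maW_\maF)$.

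The principal technical hurdle is the conjugation argument in (iii): the fact that conjugation by $\chi$ preserves both $\Diff_{\maW_\maF}^m(X_\maF)$ and ellipticity relies decisively on the Lie-manifold tangency property ensuring that $V(x_H)/x_H\in \CI(X_\maF)$ for every $V\in \maW_\maF$ and every boundary defining function $x_H$. Once this is in place, the lower-order bookkeeping in (i), the symbol identification in (ii), and the one-step use of Theorem~\ref{theorem.ain} in (iii) are essentially routine; the entire proof is a careful packaging of the elliptic regularity theorem on Lie manifolds through the Lie-manifold structure $(X_\maF,\maW_\maF)$ constructed in Section~\ref{sec4}.
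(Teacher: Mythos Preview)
Your proof is correct and follows essentially the same strategy as the paper: parts (i) and (ii) are read off from the structural description of $\Diff_{\maW_\maF}^m(X_\maF)$ in Remark~\ref{rem.nb.Lie}, and part (iii) passes to the elliptic operator $P=\rho_{\wha\maF}^m D-\lambda\rho_{\wha\maF}^m$ and invokes the regularity theorem from \cite{sobolev}. Two minor remarks: first, for (i) you do not need the commutator rewriting, since Remark~\ref{rem.nb.Lie} already characterizes $\Diff_{\maW_\maF}^m(X_\maF)$ as spanned by monomials $a\,\rho_{\wha\maF}^j V_1\cdots V_j$ with $a\in\CI(X_\maF)$ and $V_i\in\maWeucl$, so $\rho_{\wha\maF}^k a_\alpha\partial^\alpha=(a_\alpha\rho_{\wha\maF}^{k-|\alpha|})\cdot\rho_{\wha\maF}^{|\alpha|}\partial^\alpha$ lies there directly; second, your aside about absorbing $\rho_{\wha\maF}^{n/p}$ is unnecessary, since $L^p(X_\maF)$ in the statement already denotes the Lie-manifold $L^p$-space, so $u\in\chi L^p(X_\maF)$ gives $v=\chi^{-1}u\in W^{0,p}(X_\maF,\maW_\maF)$ without any adjustment.

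The one place where you are more explicit than the paper is in (iii): the paper simply applies the regularity result of \cite{sobolev} directly to $u$ in the weighted space, implicitly relying on the fact that the full \cite[Theorem~8.7]{sobolev} (of which Theorem~\ref{theorem.ain} is only the unweighted special case) already accommodates such weights. Your conjugation argument---showing that $\chi^{-1}(\cdot)\chi$ preserves $\Diff_{\maW_\maF}^m(X_\maF)$ and its principal symbol because $V(x_H)/x_H\in\CI(X_\maF)$ for $V\in\maW_\maF$---is the standard way to reduce the weighted statement to the unweighted Theorem~\ref{theorem.ain}, and is correct as written.
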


Before proceeding to the proof, let us make a few comments on the setting.

\begin{remark}
All of $\XGV$, $X_{\maF}$, $\rho_{\wha{\maF}}$, and $\maW_{\maF}$ 
are defined using an inductive procedure
based on an ordering of $\SS_\maF\cup \ol\maF$. This ordering is not important, as long as it is
an admissible order (see Definition \ref{def.admissible}). In our case, however, it is
convenient to use an admissible order that puts first the elements of $\SS_\maF$
and then the elements of $\overline{\maF}$, as many objects are defined on the intermediary
blow-up $\XGV\coloneqq  [\oX: \SS_\maF]$. We note also that $L^p(\XGV) = L^p(X)$
since 
\begin{equation}
   \XGV \smallsetminus \pa \XGV \seq \oX \smallsetminus \partial \oX \seq 
   X \smallsetminus \unionF\,,
\end{equation}
with the same induced measure. However, the measure on the interior of
$X_{\maF}$, $X_{\maF} \smallsetminus \pa X_{\maF} = X \smallsetminus \unionF$ 
differs from the measure on $X$ by the factor $\rho_{\wha{\maF}}^{-n}$, where $n=\dim M$. 

Also, we mention that this theorem generalizes Theorem 4.2 in
\cite{ACN}, but note that, in the statement (ii) of that theorem, the radial
compactification (denoted $\SS$ in that paper) needs to be replaced with the
Georgescu--Vasy space $\XGV$.
\end{remark}

\begin{proof}\ 
\begin{enumerate}[{Ad} (i):]
\item\label{thm.reg.proof.i} This follows from Remarks \ref{rem.ex.GV} and \ref{rem.nb.Lie} and
  from the definitions of $X_{\maF}$ and $\maW_{\maF}$.
  
\item\label{thm.reg.proof.ii} This follows from \eqref{thm.reg.proof.i} just proved by combining 
with Remarks~\ref{rem.interior.Lie}, \ref{rem.ex.GV}, and~\ref{rem.nb.Lie}.

\item\label{thm.reg.proof.iii} Let $u \in \chi L^p(X_{\maF})$ be such that $D u = \lambda u$. 
Then $Q \coloneqq  \rho_{\wha{\maF}}^m D - \lambda \rho_{\wha{\maF}}^m$ is an elliptic operator 
in $\Diff_{\maW_{\maF}}(X_\maF)$, by \eqref{thm.reg.proof.ii} just proved. It satisfies $Q u = 0$. 
The result is hence a direct consequence of the regularity result in \cite{sobolev} (this result 
was recalled in Theorem~\ref{theorem.ain}).
\end{enumerate}
\end{proof}

Let us give now a more concrete application. We fix a Euclidean metric on $X$, $n=\dim (X)$.
First, let us note that it follows from Remarks \ref{rem.interior.Lie} and 
\ref{rem.ex.GV} that 
\begin{eqnarray}
  W^{k,p}(X_{\maF}, \maW_{\maF}) &\coloneqq & \{u: X \to \CC \,\mid\
         \rho_{\wha{\maF}}^{|\alpha|}
        \pa^\alpha u \in L^p(X, \mu_\maF),\, \
        |\alpha| \le k\,\}\nonumber\\
        &=& \{u: X \to \CC \,\mid\
         \rho_{\wha{\maF}}^{|\alpha|-(n/p)}
        \pa^\alpha u \in L^p(X, \mu_\eucl),\, \
        |\alpha| \le k\,\}\, ,
  \label{eq.Sobolev.XS}
\end{eqnarray}
where we used the standard Lebesgue measure $\mu_\eucl$ and the measure 
$\mu_\maF= \rho_{\wha{\maF}}^{-n}\mu_\eucl$  associated with 
$g_\maF=\rho_{\wha{\maF}}^{-2}\geucl$ on  $X$.

\begin{definition}\label{def.isp} 
Let $X$, $\maF$, $\SS_\maF\cup \ol\maF$, and $X_{\maF} \coloneqq  [\oX :  \SS_\maF\cup \ol\maF]$ be
as above (as in Theorem~\ref{theorem.intro.reg}, for instance). 
Let $d_Y(x) \coloneqq  \dist(x, Y)$ denote the distance from $x \in X$ to $Y \in \maF$.
An \emph{inverse square potential with singularities in $\maF$}
is a function $V : X \smallsetminus \unionF \to \CC$ of the form 
\begin{equation*} % \label{eq.def.genV}
    V(x) \ede \sum_{Y \in \maF}  \left( \frac{a_Y(x)}{d_Y(x)^{2}} 
    + \frac{b_Y(x)}{d_Y(x)} \right) + c(x)\,,
\end{equation*}
where $a_Y, b_Y, c  \in \CI(X_\maF)$.
\end{definition}

Note that, since $\CI(\oX)\subset \CI(X_\maF)$,
our inverse square potentials are rather general and include the usual inverse 
square potentials. The following lemma justifies our definition of inverse 
square potentials.

\begin{lemma}\label{lemma.inv.sq}
The set of inverse square potentials is a complex vector space.
Let $V$ be an inverse square potential, then $\rho_{\wha{\maF}}^2 V \in \CI(X_\maF)$.
\end{lemma}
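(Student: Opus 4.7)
The plan is to treat the two assertions separately. The first statement is essentially a bookkeeping check: if $V_1, V_2$ are inverse square potentials with coefficients $a_Y^{(i)}, b_Y^{(i)}, c^{(i)} \in \CI(X_\maF)$ and $\lambda_1, \lambda_2 \in \CC$, then $\lambda_1 V_1 + \lambda_2 V_2$ has the same form with coefficients $\lambda_1 a_Y^{(1)} + \lambda_2 a_Y^{(2)}$, etc. Since $\CI(X_\maF)$ is itself a complex vector space, these combined coefficients remain in $\CI(X_\maF)$, so the class of inverse square potentials forms a complex vector space.

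For the second assertion, the key point is to rewrite
\begin{equation*}
   \rho_{\wha{\maF}}^2 V \seq \sum_{Y \in \maF}\left( a_Y \left(\frac{\rho_{\wha{\maF}}}{d_Y}\right)^{\!2} + b_Y \cdot \frac{\rho_{\wha{\maF}}}{d_Y} \cdot \rho_{\wha{\maF}}\right) + c\,\rho_{\wha{\maF}}^2
\end{equation*}
on $X \smallsetminus \unionF$ and to argue that every factor lies in $\CI(X_\maF)$. Here I would invoke Proposition~\ref{prop-smoothing-pot}, which is exactly the statement that $\rho_{\wha{\maF}}\, d_Y^{-1}$ extends from $X \smallsetminus Y$ to a function in $\CI(X_\maF)$ for each $Y \in \maF$. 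Combined with the fact that $\rho_{\wha{\maF}}$ itself is smooth on $X_\maF$ (by Remark~\ref{rem.cond.ex.bis}, its pull-back to $[\XGV:\wha\maF]=X_\maF$ is smooth) and the assumption $a_Y, b_Y, c \in \CI(X_\maF)$, each summand is a product of smooth functions on $X_\maF$. Since $\CI(X_\maF)$ is an algebra, the conclusion follows.

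No real obstacle is expected: all the work has been done in the preceding section, where Proposition~\ref{prop-smoothing-pot} was established by tracing how the smoothed distance function $\rho_{\wha\maF}$ factors through a smoothed distance function to a single $\wha Y$ (via Proposition~\ref{prop.smoothness}), which in turn is comparable to $d_Y$ (via Lemma~\ref{lemma.dY}). This lemma is precisely the combinatorial payoff of that structural work, and its proof reduces to one identity and one application of Proposition~\ref{prop-smoothing-pot}.
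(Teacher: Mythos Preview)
Your proposal is correct and follows exactly the paper's approach: the vector space claim is immediate from the definition, and the smoothness of $\rho_{\wha{\maF}}^2 V$ follows by writing out the terms and applying Proposition~\ref{prop-smoothing-pot} (for $\rho_{\wha{\maF}}/d_Y \in \CI(X_\maF)$) together with $\rho_{\wha{\maF}} \in \CI(X_\maF)$. Your write-up is simply more explicit than the paper's terse version.
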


\begin{proof} From the definition above, it is clear that the set of inverse square potentials is a complex vector space.
Proposition \ref{prop-smoothing-pot} gives $\rho_{\wha{\maF}} d_Y \in \CI(X_\maF)$. The result then
follows since $\rho_{\wha{\maF}} \in \CI(X_\maF)$.
\end{proof}

\begin{example}[The Schr\"odinger operator in quantum physics]
The Schr\"odinger operator of an atom with heavy nucleus and
with $N$ electrons, studied in physics, is the operator
\begin{equation*}
   u(x)\mapsto (\maH u)(x):= \Delta u(x)  +V(x) u(x)\,,
\end{equation*}
regarded as an unbounded, 
densely defined operator in $L^2(\RR^n)$, $n=3N$. We write
$x=(x_1,x_2,\ldots, x_N)\in \RR^{3N}$ with $x_i\in \RR^3$.
The potential $V$ is given by
\begin{equation*}
   V(x)\ede \sum_{1 \le j \le N} \frac{b_j}{\|x_j\|} 
   + \sum_{1 \le i < j \le N} \frac{c_{ij}}{\|x_i-x_j\|}.
\end{equation*}
The potential $V$  is an inverse square potential with singularities 
on the collision planes (more precisely, on the semilattice
generated by the collision planes). We thus may apply Theorem~\ref{theorem.regularity} 
to eigenfunctions of the differential operator $\maH$.
\end{example}

Moreover, the inverse square potentials considered in 
\cite{DerezinskiFaupin, DerezinskiRichard, HLNU1, HLNU2, LiNi09} are also 
inverse square potentials in our sense.

\begin{theorem}\label{theorem.application} 
We use the notation introduced in Theorem~\ref{theorem.regularity}.
Let $D$ be a constant coefficient elliptic operator on $X = \RR^n$
and $V$ be an inverse square potential. 
Let $\rho_{\wha{\maF}}$ be a smoothed distance function
to $\wha{\maF}$, as above.
Assume $u \in L^2(\RR^{n})$ is an eigenfunction of $D + V$, then 
\begin{equation*}
     \rho_{\wha{\maF}}^{|\alpha|} \pa^\alpha u \in L^2(\RR^n)
\end{equation*}
for all multi-indices $\alpha$.
\end{theorem}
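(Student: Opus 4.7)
The plan is to reduce Theorem~\ref{theorem.application} to Theorem~\ref{theorem.regularity}(iii) applied to $D+V$ with a carefully chosen weight $\chi$. Let $m$ be the order of $D$; for $n\ge 2$ a constant-coefficient elliptic operator automatically has $m\ge 2$. Since $D$ has constant coefficients, I would set $D_m:=D$, $D_j:=0$ for $0<j<m$, and $D_0:=\rho_{\wha{\maF}}^{m}V$. By Lemma~\ref{lemma.inv.sq} we have $\rho_{\wha{\maF}}^{2}V \in \CI(X_\maF)$, so $D_0 = \rho_{\wha{\maF}}^{m-2}(\rho_{\wha{\maF}}^{2}V) \in \CI(X_\maF)$, a genuine zeroth-order operator on $X_\maF$. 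Hence
\begin{equation*}
  D+V \seq D_m \,+\, \rho_{\wha{\maF}}^{-m} D_0
\end{equation*}
fits the hypothesis of Theorem~\ref{theorem.regularity}, and parts~(i)--(ii) of that theorem yield $\rho_{\wha{\maF}}^{m}(D+V) \in \Diff_{\maW_\maF}^{m}(X_\maF)$, elliptic there because $D$ is uniformly elliptic on~$X$.

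Next, I would choose $\chi$ so that $\chi L^{2}(X_\maF)$ coincides with the ordinary $L^{2}(\RR^n)$. By Remark~\ref{rem.nb.Lie} the volume form of a $\maW_\maF$-compatible metric on $X_\maF$ is $\rho_{\wha{\maF}}^{-n}\mu_\eucl$, so $u \in L^{2}(\RR^n)$ is equivalent to $\rho_{\wha{\maF}}^{n/2}u \in L^{2}(X_\maF)$. By Remark~\ref{rem.cond.ex.bis}, $\rho_{\wha{\maF}}$ is equivalent on $X_\maF$ to a product of defining functions of those hyperfaces created by the blow-ups along $\wha{\maF}$, so $\rho_{\wha{\maF}}^{-n/2}$ has the required form $\chi = \prod_{H} x_H^{\mu_H}$ (with $\mu_H = -n/2$ on those hyperfaces, $\mu_H = 0$ otherwise). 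Therefore $u \in \chi L^{2}(X_\maF)$, and Theorem~\ref{theorem.regularity}(iii) applied with this $\chi$ and $p=2$ produces $u \in \chi W^{\ell,2}(X_\maF,\maW_\maF)$ for every $\ell \in \NN$.

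The remaining -- and most delicate -- step is to translate this conclusion into the Euclidean statement. By Remark~\ref{rem.nb.Lie}, $\Diff_{\maW_\maF}^{\ell}(X_\maF)$ is generated over $\CI(X_\maF)$ by the operators $\rho_{\wha{\maF}}^{|\alpha|}\pa^{\alpha}$ with $|\alpha|\le \ell$. The key observation is that $\chi$ is conormal for $\maW_\maF$: every $V \in \maW_\maF$ is tangent to each boundary hyperface~$H$ of $X_\maF$ (Definition~\ref{def.Lie.Man}), so $V x_H \in x_H \CI(X_\maF)$, whence $V\chi/\chi \in \CI(X_\maF)$ by the Leibniz rule, and by iteration conjugation by $\chi$ preserves $\Diff_{\maW_\maF}^{\ell}(X_\maF)$. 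Consequently $u \in \chi W^{\ell,2}(X_\maF,\maW_\maF)$ is equivalent to $Qu \in \chi L^{2}(X_\maF) = L^{2}(\RR^n)$ for every $Q \in \Diff_{\maW_\maF}^{\ell}(X_\maF)$; specializing to $Q = \rho_{\wha{\maF}}^{|\alpha|}\pa^{\alpha}$ and letting $\ell$ range over $\NN$ gives the claim $\rho_{\wha{\maF}}^{|\alpha|}\pa^{\alpha}u \in L^{2}(\RR^n)$ for every multi-index~$\alpha$. The main obstacle is precisely this last conjugation bookkeeping: verifying that the $\chi$-twist is compatible with the Lie-manifold filtration so that the weighted Sobolev membership transfers to uniform estimates on individual Euclidean derivatives weighted by the appropriate powers of $\rho_{\wha{\maF}}$.
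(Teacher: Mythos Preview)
Your proof is correct and follows essentially the same route as the paper: identify $D+V$ as an operator of the form treated in Theorem~\ref{theorem.regularity} (using Lemma~\ref{lemma.inv.sq} and $m\ge 2$), choose $\chi=\rho_{\wha{\maF}}^{-n/2}$ so that $\chi L^2(X_\maF)=L^2(\RR^n)$, and invoke Theorem~\ref{theorem.regularity}(iii). The paper's proof is terser in the last step, simply citing the identification~\eqref{eq.Sobolev.XS}; your conjugation argument (that $V\chi/\chi\in\CI(X_\maF)$ for $V\in\maW_\maF$, hence $\chi^{-1}\Diff_{\maW_\maF}^\ell\chi\subset\Diff_{\maW_\maF}^\ell$) is exactly what makes that citation work, so you have in fact spelled out the detail the paper leaves implicit.
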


\begin{proof} 
Let $g_{\RR^{n}}$ be the canonical Euclidean metric on $\RR^n$.
It is also a compatible metric~$g_{GV}$ on $\XGV$, as explained in 
Remark~\ref{rem.ex.GV}. Then $g_{X_{\maF}} = \rho_{\wha{\maF}}^{-2} g_{\RR^n}$ is
a compatible metric~$g_{\maF}$ on~$X_{\maF}$, as explained in Remark \ref{rem.nb.Lie}.
Hence we have that 
\begin{equation*}
   L^2(\RR^{n}, g_{\RR^n}) \seq L^2(\XGV, g_{GV}) \seq 
   \rho_{\wha{\maF}}^{-n/2} L^2(X_{\maF}, g_{\maF}) \,.
\end{equation*}
The function $\rho_{\wha{\maF}}$ is a product of defining functions of 
hyperfaces of $X_\maF$, by the definition of a smoothed distance
function to a semilattice (Definition \ref{def.rho_maS}), and hence 
$\rho_{\wha{\maF}}^{-3N/2} = \chi$, for some $\chi$ as in
Theorem~\ref{theorem.regularity} \eqref{thm.reg.proof.iii} with $\mu_H\equiv -n/2$. 
The result then follows from
Theorem~\ref{theorem.regularity} \eqref{thm.reg.proof.iii}  and the description of Sobolev spaces
in Equation~\eqref{eq.Sobolev.XS}. 
\end{proof}

Of course, $D = - \Delta$ satisfies the assumptions of the above theorem,
so our regularity estimates are valid for Schr\"odiner operators with
inverse square or Coulomb type singularities. To obtain Theorem \ref{theorem.intro.reg} 
stated in the Introduction, recall from Remark~\ref{rem.useful} the following. 
Let $\rho(x) \coloneqq \dist_{\overline g}(x, \unionma{\wha\maF})$ be the 
distance to $\unionma{\wha{\maF}}$ in some \emph{true}
metric $\overline g$ on $\XGV$. Then the functions $\rho_{\wha{\maF}}$, $\rho$, 
and the function $\delta_{\maF} \coloneqq \min \{ \dist (x, \unionF), 1\}$ 
are Lipschitz equivalent, see Appendix~\ref{app.a}.

\begin{remark}\label{rem.nonlin}
The methods in this article also generalize to non-linear equations.
As an example we consider equations of the form 
\begin{equation}\label{eq.nonlin}
  \Delta u + V |u|^s u=\lambda u
\end{equation}
on $\RR^n$, where $\lambda\in \CC$, $0<s\leq 2/(n-2)$,
and $V$ is of Coulomb type (i.e. as in \eqref{eq.V.theo.intro.reg}). 
Let $\hat g\coloneqq \rho_{\wihat\maF}^{-2}\geucl$ be an adapted Riemannian metric on $(X_\maF,\maW_\maF)$. We define the \emph{Yamabe operator} $L^g\coloneqq \Delta^g- \frac{n-2}{4(n-1)}\mathrm{scal}^g$, and one may find in any reference about the Yamabe problem or conformal geometry that
$$L^{\hat g}(\hat u)= (\rho_{\wihat\maF})^{(n+2)/2}L^{\eucl}\bigl((\rho_{\wihat\maF})^{-(n-2)/2}\hat u\bigr).$$
After multiplication with $(\rho_{\wihat\maF})^{(n+2)/2}$ Equation~\eqref{eq.nonlin}  transforms into
$$L^{\hat g}\hat u +\rho_{\wihat\maF}^t V|\hat u|^s \hat u = \rho_{\wihat\maF}^2 \lambda \hat u$$
with $t=\big((n+2)-(s+1)(n-2)\bigr)/2=2-(s+1)(n-2)/2$ and $\hat u\coloneqq \rho_{\wihat\maF}^{(n-2)/2}u$. We have $t\geq 1$ if and only if $s\leq 2/(n-2)$, and in this case the Coulomb assumption implies the boundedness of $\rho_{\wihat\maF}^t V$. Then regularity theory yields for any $\eta\in \CI(X_\maF)$: if
\begin{equation}\label{reg.cond}
  \hat u\in \eta L^2(X_\maF,\maW_\maF)\cap \eta^{1/(s+1)}  L^{2/(s+1)}(X_\maF,\maW_\maF)
\end{equation}
then
\begin{equation}\label{reg.concl}
  \hat u\in \eta W^{2,2}(X_\maF,\maW_\maF).
\end{equation}
We may apply this for example to $\eta\coloneqq \rho_{\wihat\maF}^{-\left(1+\frac{ns}2\right)}$. Then
\begin{equation}\label{reg.cond.eucl}
  u\in L^2(X,\geucl)\cap  L^{2/(s+1)}(X,\geucl)
\end{equation}
implies \eqref{reg.cond}, and then~\eqref{reg.concl} implies
\begin{equation}\label{reg.concl.eucl}
  \rho_{\wihat\maF}^{|\alpha|+\frac{ns}2}\partial^\alpha u \in L^2(X,\geucl)\text{ for all multi-indices }\alpha\text{ with }|\alpha|\leq 2.
\end{equation}
Thus we have \eqref{reg.cond.eucl} $\Rightarrow$ \eqref{reg.concl.eucl}, and many similar conclusions are possible.
\end{remark}

\begin{remark}\label{rem.symmetry}
The point of view of Subsection~\ref{subsec.blow-up-spherical} yields an alternative 
way to construct the space $X_\maF$. Recall that as a set we have $X_\infty=X \cup\{\infty\}$ with the differential structure given by stereographic projection.
Instead of taking the closure of a
$\{0\} \neq Y \in \maF \smallsetminus \{ 0 \}$ in $\ol X$, we may take its closure 
in $X_\infty$ which is then $Y_\infty = Y\cup\{\infty\}$. 
For $Y=\{0\}$, we use $\{0\}_\infty = \{0,\infty\}$ instead of the closure. We define
$$\maF_\infty \coloneqq \bigl \{Y_\infty\,\bigm|\, Y\in \maF \bigr\}.$$
Since we have (always) assumed that $\maF$ is a finite semi-lattice of 
linear subspaces of $X$ with $\{0\}\in \maF$, we see that $\maF_\infty$ is a clean
semilattice of closed \psbmanifolds{} of $X_\infty$
(i.e. a cleanly  intersecting family of closed \psbmanifolds{} and a semi-lattice),
which we endow with an admissible ordering. Because of the diffeomorphisms
$[X_\infty:\{\infty\}] \simeq \ol X$ and $[Y_\infty:\{\infty\}] = 
\ol Y\subset [X_\infty:\{\infty\}]$ we see that $[X_\infty:\maF_\infty]=[\ol X,\ol\maF]$. This is the blow-up used in \cite{ACN}. However it differs from the blow-up constructed in this article, which is $X_\maF=[\XGV:\ol F]$.
\end{remark}

\begin{example}
Consider $\mathcal{F}=\{ \{0\},Y\}$ then $\mathcal{F}_\infty=\{ \{0\}\cup \{\infty\},Y_\infty \}$.
Using the result that $[M:P,Q] \simeq [[M: P] :[Q:P]]$ for $P \subset Q$, we obtain
\begin{eqnarray*}
  [X_\infty: \maF_\infty]&=&[[X_\infty : \{ \infty ,0\}, [Y_\infty : \{\infty,0\}] ]=[[X_\infty : \{ \infty \} ]: \{0\}, [Y_\infty : \{\infty\}]: \{0\} ]\\
    &=& [\oX: \{0\} , \oY] = [\oX : \overline{\mathcal{F}}] \neq X_\maF= [\oX : \mathbb{S}_\maF \cup \overline{\mathcal{F}}].
\end{eqnarray*}
\end{example}

\appendix

\section{The equivalence of $\rho$ and $\delta_{\maF}$}\label{app.a}

Let $X$ and $\maF$ be as in the previous sections, that is, $\maF$ is a finite 
semilattice of linear subspaces of the real, finite-dimensional vector space $X$. 
For each $Y\in \maF$ let $\wha Y$ be the closure of $Y$ in $\XGV$
(it coincides with the lift $\beta^*\oY$ of $\oY$ to $\XGV$) and 
$\wha{\mathcal{F}}\ede \{\wha Y\mid Y\in \maF\}$. Recall from
the Introduction that $\rho(x) \coloneqq  \dist_{\overline g}(x, \wha{\maF}) 
=  \dist_{\overline g}(x, \maF)$ be the distance to $\wha{\maF}$ in some \emph{true}
metric $\overline g$ on $\XGV$. Let $\dist(x, \unionF)$ be the distance from $x$ to
$\unionF$ in the usual, Euclidean distance and set $\delta_{\maF}(x) \coloneqq  
\min\{\dist(x, \unionF), 1\}$, again as in the Introduction.
The function $\rho$ used to obtain our regularity results in the previous section
is maybe not explicit enough, so
we prove now that the functions $\rho$ and $\delta_{\maF}$ are Lipschitz
equivalent. More precisely, we shall prove the following
result.

\begin{proposition}\label{prop.equivalence}
Let $\maF$ be a semilattice of linear subspaces of
the real, finite-dimensional vector space $X$. The following 
functions are Lipschitz equivalent (as functions on $X$):
\begin{enumerate}
   \item $\rho \coloneqq $ the distance function to $\unionx{\wha{\maF}}$ 
    in some true metric on $\XGV$;
   \item $\delta_{\maF}(x) \coloneqq  \min\{\dist(x, \unionF), 1\}$; and
   \item $\rho_\maF  \coloneqq $ a \emph{smoothed} distance function to 
   $\unionx{\wha{\maF}}$.
\end{enumerate}
\end{proposition}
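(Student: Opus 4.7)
The plan is to route the argument through the auxiliary function $\min_{Y\in \maF} r_{\wha Y}$ on $\XGV$, where $r_{\wha Y}$ is the smoothed distance function to $\wha Y$ in $\XGV$ from Lemma~\ref{lemma.dY}, obtained as the extension of $\phi_0 \circ d_Y$. The Lipschitz equivalence of the three functions on $X$ will then follow from three Lipschitz equivalences on $\XGV$ and an elementary computation on $X$.

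First, the equivalence $\rho \sim \rho_\maF$ is precisely the content of Remark~\ref{rem.useful} (extending \cite[Lemma~3.17]{ACN}), applied to the clean semilattice $\wha\maF$ of closed \psbmanifolds{} of the compact manifold with corners $\XGV$. Since the ambient space is compact, continuous equivalence is equivalent to Lipschitz equivalence in the sense of \eqref{eq.Lip.equiv}, cf.\ the discussion following Definition~\ref{def.equiv.funct}.

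Second, from the elementary identity
\begin{equation*}
  \rho(x) \seq \dist_{\overline g}(x,\unionx{\wha\maF}) \seq \min_{Y\in\maF}\dist_{\overline g}(x,\wha Y),
\end{equation*}
it suffices to show $\dist_{\overline g}(\cdot,\wha Y) \sim r_{\wha Y}$ on $\XGV$ for each $Y\in\maF$. This is Remark~\ref{rem.useful} again, now applied to the one-element clean semilattice $\{\emptyset,\wha Y\}$ in $\XGV$, for which the smoothed distance $\rho_{\{\emptyset,\wha Y\}}$ reduces to $r_{\wha Y}$. Taking the minimum over the finite family $\maF$ preserves Lipschitz equivalence, so $\rho \sim \min_{Y\in\maF} r_{\wha Y}$ on $\XGV$.

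Third, by Lemma~\ref{lemma.dY}, $r_{\wha Y}|_X = \phi_0\circ d_Y$; since $\phi_0$ is Lipschitz equivalent to $t\mapsto \min(t,1)$ on $[0,\infty)$ (both are nondecreasing, bounded, positive on $(0,\infty)$, equal on $[0,1/2]$ and on $[1,\infty)$), we have $r_{\wha Y}|_X \sim \min(d_Y,1)$ on $X$. Therefore
\begin{equation*}
  \min_{Y\in\maF} r_{\wha Y}(x) \,\sim\, \min_{Y\in\maF}\min(d_Y(x),1) \seq \min\bigl(\dist(x,\unionF),1\bigr) \seq \delta_\maF(x).
\end{equation*}
Concatenating the three equivalences on $X$ yields $\rho_\maF \sim \rho \sim \delta_\maF$, completing the proof. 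The only delicate point is the transfer of continuous equivalence from the blow-up spaces where the smoothed distance functions are naturally compared (\eg $[\XGV:\wha Y]$ or $[\XGV:\wha\maF]$) to the ambient space $\XGV$; this is automatic from the surjectivity of the blow-down and the compactness of $\XGV$.
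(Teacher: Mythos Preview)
Your argument is correct and takes a genuinely different route from the paper's. The paper establishes $\rho\le C\delta_\maF$ via a general comparison lemma (any true metric on a compact Lie manifold is dominated by a compatible metric, hence $\dist_{\overline g}\le C\dist_g$), and for the reverse inequality it \emph{constructs} a specific true metric on $\XGV$ by pulling back a product metric through the diagonal embedding $\XGV\hookrightarrow\prod_{Y\in\maF}\overline{X/Y}$, where each factor carries a metric that agrees with the euclidean one on the unit ball; this makes $\rho(x)\ge\delta_\maF(x)$ a direct calculation. Your approach instead reuses machinery already built in the body of the paper: you invoke Remark~\ref{rem.useful} not only for the full family $\wha\maF$ but also for each singleton $\{\emptyset,\wha Y\}$, obtaining $\dist_{\overline g}(\cdot,\wha Y)\sim r_{\wha Y}$, and then feed in Lemma~\ref{lemma.dY} to identify $r_{\wha Y}|_X=\phi_0\circ d_Y$. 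Taking minima over the finite family $\maF$ and using $\phi_0\sim\min(\cdot,1)$ finishes the job. Your route is shorter and avoids the auxiliary Lemmas~\ref{lemma.Lie.le}--\ref{lemma.Z}; the paper's route, on the other hand, isolates the general metric-comparison principle (Lemma~\ref{lemma.Lie.le}) and the explicit product-metric construction, which are of independent interest and do not rely on the somewhat black-boxed \cite[Lemma~3.17]{ACN} beyond the single application to the full family.
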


We recall that it was proved in \cite{ACN} that $\rho_\maF$
and $\rho$ are continuously equivalent and thus Lipschitz equivalent. 
(This result was discussed and reminded in Remark~\ref{rem.useful}.)
The rest of this section will then be devoted to proving that
$\rho$ and $\delta_\maF$ are Lipschitz equivalent.

\begin{lemma}\label{lemma.Lie.le}
Let $(M, \maV)$ be a Lie manifold (and hence $M$ is compact, 
see Definition~\ref{def.Lie.Man}),
let $\overline g$ be a true metric on $M$, and let
$g$ be a $\maV$-compatible metric. Then there exists $C > 0$
such that $\overline g \le C^2 g$. In particular, for all $x, y \in M \smallsetminus \pa M$,
\begin{equation*}
   \dist_{\overline g} (x, y) \le C \dist_g(x, y)\,.
\end{equation*}
\end{lemma}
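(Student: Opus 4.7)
The plan is to exploit the alternative description of a compatible metric given in Remark~\ref{rem.alt.descrip.compat.met}: $g$ arises from a smooth fibre metric $\gamma$ on the anchor bundle $A \to M$ via the anchor $\varrho : A \to TM$, and the contravariant tensor $G \ede (\varrho \otimes \varrho)(\doublesharp{\gamma})$ extends to a \emph{smooth} global section of $TM \otimes TM$ on \emph{all} of $M$ (including the boundary). On the interior $\interior M = M \smallsetminus \pa M$, where $\varrho$ is a fibrewise isomorphism, $G$ coincides with the dualized metric $\doublesharp{g}$. The true metric $\ol g$ similarly dualizes to a smooth, everywhere positive definite section $\doublesharp{\ol g}$ of $TM \otimes TM$.

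Since $M$ is compact and both $G$ and $\doublesharp{\ol g}$ are continuous sections with $\doublesharp{\ol g}$ strictly positive definite at every point, a standard compactness argument (bounding the largest generalized eigenvalue of $G$ relative to $\doublesharp{\ol g}$, which depends continuously on the base point) yields a constant $C > 0$ such that $G \le C^2 \doublesharp{\ol g}$ pointwise on all of $M$. Restricting to $y \in \interior M$, where both tensors are non-degenerate, I would invoke the standard fact that dualization reverses inequalities between positive definite symmetric forms: $\doublesharp{g} \le C^2 \doublesharp{\ol g}$ is equivalent to $\ol g \le C^2 g$ as symmetric bilinear forms on $T_y M$. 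This gives the first conclusion.

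For the distance inequality, I would fix $x, y \in \interior M$ and take any piecewise smooth curve $\gamma : [0,1] \to \interior M$ joining them. The pointwise bound $\ol g(\gamma', \gamma') \le C^2 g(\gamma', \gamma')$ integrates to $L_{\ol g}(\gamma) \le C L_g(\gamma)$, and taking the infimum over such curves yields $\dist_{\ol g}(x,y) \le C \dist_g(x,y)$, as claimed. (The $\ol g$-distance computed inside $\interior M$ dominates the ambient $\ol g$-distance on $M$, so no extension issue arises.)

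The only delicate step is justifying uniformity of $C$ on the noncompact interior, which is precisely where the compactness of $M$ enters through the smooth extension of $G$ to the boundary. An equivalent but more hands-on variant avoids dualization entirely by invoking Definition~\ref{def.comp.metric} directly: cover $M$ by finitely many closed neighborhoods $U$ on which $\maV|_U$ admits a $g$-orthonormal local basis $(v_1, \ldots, v_n)$, observe that the symmetric matrix $G_{ij} \ede \ol g(v_i, v_j)$ is smooth on the compact set $U$ and hence has bounded operator norm there, and then for any $w = \sum a_i v_i(y) \in T_y M$ with $y \in U \cap \interior M$ use $\ol g_y(w,w) = \sum G_{ij}(y) a_i a_j \le C_U \sum a_i^2 = C_U\, g_y(w,w)$; taking $C^2$ to be the maximum of the $C_U$ over the finite cover finishes the argument.
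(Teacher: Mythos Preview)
Your proof is correct and follows essentially the same route as the paper's: both invoke Remark~\ref{rem.alt.descrip.compat.met} to extend $\doublesharp{g}$ to the smooth section $G=(\varrho\otimes\varrho)(\doublesharp{\gamma})$ on all of $M$, use compactness of $M$ (the paper phrases it via the compact $\ol g$-unit cotangent bundle, you via the largest generalized eigenvalue) to obtain $G\le C^2\,\doublesharp{\ol g}$, and then dualize to $\ol g\le C^2 g$. Your added paragraph on the distance inequality and the alternative local-frame argument are correct supplements but not needed beyond what the paper does.
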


\begin{proof}
As explained in Remark~\ref{rem.alt.descrip.compat.met}, a compatible metric on $M$ is given 
by a metric $\gamma\in\Gamma(A^*\otimes A^*)$ on the vector bundle (Lie algebroid)
$A$, and then $G := \bigl(\varrho\otimes \varrho\bigr)(\doublesharp{\gamma})\in \Gamma(TM\otimes TM)$ 
is a well-defined smooth symmetric section. On 
$\interior{M}$, the section $G$ is non-degenerate, and thus we have $G|_{\interior{M}}=\doublesharp{{g_0}\kern-.4pt}$ 
for some $\maV$-compatible Riemannian metric $g_0$ on $\interior{M}$. By definition, any  $\maV$-compatible 
Riemannian metric arises in this way for some $\gamma$.
We thus have seen that $\doublesharp{g}$ extends smoothly to a symmetric tensor $G\in\Gamma(TM\otimes TM)$.
The $\ol g$-unit cotangent bundle $\SS_{\ol g}^* M\subset T^*M$ is compact.
By continuity of $G$ and compactness of $\SS_{\ol g}^* M$ the supremum 
$c_1\coloneqq \sup \{G(X,X)\mid X\in \SS_{\ol g}^* M\subset T^*M\}$ 
is bounded. We get $G\leq c_1 \doublesharp{\bar g}$ and thus also $\doublesharp{g}\leq c_1 \doublesharp{\bar g}$. 
This implies the statement by duality for $C:=\sqrt{c_1}$. 
\end{proof}

We obtain the following corollary.

\begin{corollary} \label{cor.first.ineq}
Let $g$ be the Euclidean distance on $\RR^n$ and $\overline g$ be
any true metric on $\XGV$. Then there exists $C > 0$ such
that, for any $x \in X$, we have
\begin{equation*}
  \dist_{\overline g}(x, \unionF) \le C \dist_{g}(x, \unionF)
\end{equation*}
\end{corollary}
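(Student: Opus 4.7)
The plan is to reduce this corollary to a direct application of Lemma~\ref{lemma.Lie.le} together with the Lie manifold structure on the Georgescu-Vasy compactification described in Example~\ref{ex.basic.LM.2} and Remark~\ref{rem.ex.GV}. The key observation is that $(\XGV, \maWb)$ with $\maWb \coloneqq \CI(\XGV)X$ is a Lie manifold whose interior is exactly $X$, and the euclidean metric on $X$ is a $\maWb$-compatible metric in the sense of Definition~\ref{def.comp.metric}, as recorded in Remark~\ref{rem.ex.GV}.

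First I would invoke Lemma~\ref{lemma.Lie.le} applied to $M = \XGV$, the compatible metric $g$ equal to the euclidean metric, and the given true metric $\overline g$ on $\XGV$. This yields a constant $C > 0$ such that
\begin{equation*}
    \dist_{\overline g}(x, y) \le C\, \dist_g(x, y) \qquad \text{for all } x, y \in X = \XGV \smallsetminus \pa \XGV\,.
\end{equation*}
Here I rely on the fact that for $x,y \in X$ the euclidean Riemannian distance coincides with $\|x-y\|$, and that the pointwise bound $\overline g \le C^2 g$ on $X$ forces the length of any curve in $X$ joining $x$ and $y$ to be at most $C$ times its euclidean length.

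Next, since $\unionF \subset X$, for each $y \in \unionF$ I use the above inequality to write
\begin{equation*}
    \dist_{\overline g}(x, \unionF) \le \dist_{\overline g}(x, y) \le C\, \dist_g(x, y)\,.
\end{equation*}
Taking the infimum over $y \in \unionF$ on the right-hand side then gives the desired bound
\begin{equation*}
    \dist_{\overline g}(x, \unionF) \le C\, \dist_g(x, \unionF)\,.
\end{equation*}

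There is no real obstacle here beyond carefully identifying that the Lie manifold hypothesis of Lemma~\ref{lemma.Lie.le} is satisfied by $(\XGV, \maWb)$ with the euclidean metric playing the role of the compatible one; the rest is a two-line argument from the pointwise comparison of metrics on the interior. The only subtlety worth flagging is that $\dist_{\overline g}$ is a priori an intrinsic Riemannian distance on $\XGV$ and could in principle allow competitor paths that leave $X$, but since $X$ is open and dense in $\XGV$ and $\overline g$ extends smoothly, this does not affect the upper bound: every path from $x$ to $y$ in $X$ remains a competitor, and that suffices for the inequality we want (upper bound on $\dist_{\overline g}(x,\unionF)$).
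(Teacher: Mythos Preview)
Your proof is correct and essentially identical to the paper's own argument: both apply Lemma~\ref{lemma.Lie.le} to the Lie manifold $(\XGV,\maWb)$ with the euclidean metric as compatible metric, obtain $\dist_{\overline g}(x,y)\le C\,\dist_g(x,y)$ for $x,y\in X$, and then pass to the infimum over $y\in\unionF$. The paper selects a minimizing $y$ (using that $\unionF$ is closed in $X$) rather than taking the infimum, but this is a cosmetic difference.
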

\begin{proof}
Let $y \in \unionF$ be such that $\dist_{g}(x, \unionF) = 
\dist_{g}(x, y)$, which exists since $\unionF$ is a closed
subset of $X$. Also, let $C$ be as in Lemma~\ref{lemma.Lie.le}.Then
\begin{equation*}
  C\dist_{g}(x, \unionF) \seq C\dist_{g}(x, y) \ge \dist_{\overline g}(x, y)
  \ge \dist_{\overline g}(x, \unionF)\,.
\end{equation*}
\vskip-\baselineskip
\end{proof}

Let $L \coloneqq  \{x \in X \mid \dist_{g}(x, \unionF) \ge 1\}$.

\begin{corollary} \label{cor.first.ineq2}
There is $C > 0$ such that,
for all $x \in X$, $\rho(x) \le C \delta_{\maF}(x)$.
\end{corollary}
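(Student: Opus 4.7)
The plan is to split into two cases depending on whether $x$ lies in $L$ or not, using Corollary~\ref{cor.first.ineq} in the first case and compactness of $\XGV$ in the second case.

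First observe that, since $\unionF \subset X \subset \XGV$ is contained in $\unionx{\wha{\maF}}$, we have the trivial bound
\begin{equation*}
  \rho(x) \seq \dist_{\overline g}(x, \unionx{\wha{\maF}}) \ \le\ \dist_{\overline g}(x, \unionF)
\end{equation*}
for every $x \in X$.

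Case 1: Suppose $x \notin L$, i.e.\ $\dist_{g}(x, \unionF) < 1$. Then $\delta_{\maF}(x) = \dist_{g}(x, \unionF)$, and combining the above inequality with Corollary~\ref{cor.first.ineq} yields
\begin{equation*}
   \rho(x) \ \le\ \dist_{\overline g}(x, \unionF) \ \le\ C \dist_{g}(x, \unionF) \seq C \delta_{\maF}(x),
\end{equation*}
where $C$ is the constant from Corollary~\ref{cor.first.ineq}.

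Case 2: Suppose $x \in L$, so that $\delta_{\maF}(x) = 1$. Since $\XGV$ is a compact manifold with corners and $\overline g$ is a true metric on it, the diameter $D \coloneqq \mathrm{diam}_{\overline g}(\XGV)$ is finite, and hence
\begin{equation*}
   \rho(x) \ \le\ D \seq D \cdot \delta_{\maF}(x).
\end{equation*}

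Setting $C' \coloneqq \max\{C, D\}$ gives $\rho(x) \le C' \delta_{\maF}(x)$ for all $x \in X$, as desired. There is no serious obstacle here; the only subtlety is observing that, although $\rho$ is defined on $\XGV$ via $\wha{\maF}$ (and not on $\oX$ via $\overline{\maF}$), the inclusion $\unionF \subset \unionx{\wha{\maF}}$ still provides the needed upper bound $\rho \le \dist_{\overline g}(\,\cdot\,,\unionF)$, so Corollary~\ref{cor.first.ineq} applies directly.
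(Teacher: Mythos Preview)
Your proof is correct and follows essentially the same approach as the paper: both split into the cases $x\in L$ and $x\notin L$, use compactness of $\XGV$ to bound $\rho$ in the first case, and apply Corollary~\ref{cor.first.ineq} in the second. Your version is in fact slightly more careful in making explicit the inequality $\rho(x)\le \dist_{\overline g}(x,\unionF)$ via the inclusion $\unionF\subset \unionx{\wha\maF}$, which the paper's proof uses implicitly.
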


\begin{proof} 
The function $\rho$ is bounded by some $C_1 > 0$ (since 
$\XGV$ is compact). Hence, if $x \in L$,
$\rho(x) \le C_1 = C_1 \delta_{\maF}(x)$.
On the other hand, with $C$ as in Corollary \ref{cor.first.ineq},
if $x \notin L \coloneqq  \{x \in X \mid \dist_{g}(x, \unionF) \ge 1\}$, we have
$\rho(x) = \dist_{\overline g}(x,\, \unionF) \le C \dist_{g}(x,\, \unionF) = 
C \delta_{\maF}(x)$.
So the desired $C$ is the largest of $C_1$
and the $C$ of Corollary \ref{cor.first.ineq}.
\end{proof}

Recall that in \cite{AMN1}, one of the main results, namely Theorem~\refschrSpace{thm.main1} 
states that $\XGV$ is diffeomorphic to the closure of $\delta(X)$, where $\delta$ is the diagonal 
map from $X$ to $\prod_{Y \in \maF} \oXY$ (we assume that $0 \in \maF$). The diffeomorphism is 
the unique extension of the diagonal map $x\mapsto (x,\ldots,x)$.

Let $(Z)_r$ resp.\ $\ol{(Z)}_r$ be the open resp.\ closed ball of radius $r$ around $0$ in 
a Euclidean vector space $Z$.

\begin{lemma}\label{lemma.Z} Let $Z$ be a finite-dimensional
Euclidean space with open unit ball $(Z)_1$. There exists a true metric
$g_Z$ on $\overline{Z}$ such that on $(Z)_1$ it yields the same
distances as the Euclidean metric.
\end{lemma}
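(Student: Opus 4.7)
The plan is to reduce the lemma to the trivial case of a closed euclidean ball in $Z$ by constructing a radial diffeomorphism $\Phi\colon \overline{Z}\to \ol{(Z)}_R$ (for some $R>1$) that is the identity on $\ol{(Z)}_1$, and then pulling back the euclidean metric $\geucl$ on $\ol{(Z)}_R$. The desired metric will simply be $g_Z\ede \Phi^*\geucl$.

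Concretely, I would pick a smooth increasing function $\sigma\colon [0,\infty)\to [0,R)$ such that $\sigma(r)=r$ for $r\leq 1$, $\sigma'(r)>0$ for all $r\geq 0$, $\sigma(r)\to R$ as $r\to \infty$, and such that the function $u\mapsto R-\sigma(1/u)$ extends smoothly to a neighborhood of $u=0$ with nonzero derivative there. (For example, $\sigma$ can be arranged to satisfy $\sigma(r)=R-c/r$ for all sufficiently large $r$, smoothly interpolated with $\sigma(r)=r$ on $[0,1]$.) Then set $\Phi(0)\ede 0$, $\Phi(x)\ede \sigma(|x|)\,x/|x|$ for $x\in Z\setminus\{0\}$, and $\Phi(\omega)\ede R\omega$ for $\omega\in \SS_Z$, identifying $\SS_Z$ with the unit sphere of $Z$. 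Because $\sigma(r)=r$ near $r=0$, the map $\Phi$ is smooth at the origin; because $1/r$ is a boundary defining function for $\overline{Z}$ near $\SS_Z$ and $R-|y|$ is a boundary defining function for $\ol{(Z)}_R$, the prescribed asymptotic behavior of $\sigma$ makes $\Phi$ a smooth diffeomorphism of manifolds with boundary.

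With this in hand, $g_Z\ede \Phi^*\geucl$ is a true Riemannian metric on $\overline{Z}$, and since $\Phi$ is the identity on $\ol{(Z)}_1$, we have $g_Z=\geucl$ as symmetric tensors on $(Z)_1$. Moreover, $\Phi$ is a Riemannian isometry from $(\overline{Z},g_Z)$ onto $(\ol{(Z)}_R,\geucl)$, so that $\dist_{g_Z}(x,y)=\dist_{\geucl}(\Phi(x),\Phi(y))$ for all $x,y\in \overline{Z}$. For $x,y\in (Z)_1$ we have $\Phi(x)=x$ and $\Phi(y)=y$, and since $\ol{(Z)}_R$ is a convex subset of $Z$, the straight euclidean segment from $x$ to $y$ stays in $\ol{(Z)}_R$ and realizes the distance, giving $\dist_{\geucl}^{\ol{(Z)}_R}(x,y)=|x-y|$. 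Hence $\dist_{g_Z}(x,y)=|x-y|$ on $(Z)_1$, which is the conclusion.

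The only delicate point in the plan is the verification that $\Phi$ is smooth and a diffeomorphism at $\SS_Z$ in the smooth structures at hand; this reduces to the smooth asymptotic condition on $\sigma$ at $r=\infty$ matching the natural boundary defining functions, which is guaranteed by the construction above.
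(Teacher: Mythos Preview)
Your proposal is correct and follows essentially the same approach as the paper: the paper also constructs a radial diffeomorphism $\theta:\overline{Z}\to \ol{(Z)}_2$ (so $R=2$) via a reparametrization $\rho:[0,\infty]\to[0,2]$ with $\rho(t)=t$ for $t\le 1$, and sets $g_Z=\theta^*\geucl$. Your write-up is in fact more explicit than the paper's, both in spelling out the smoothness condition at $\SS_Z$ and in verifying the distance claim via convexity of $\ol{(Z)}_R$.
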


\begin{proof} 
Let us define $[0,\infty]$ as the closure of $[0,\infty)$ in $\ol\RR$. 
Then  $[0,\infty]$ inherits a smooth structure of $\ol\RR$, and is a compact 
manifold with boundary, and there is a diffeomorphism $\rho:[0,\infty]\to [0,2]$ 
with $\rho(t)=t$ for $t\leq 1$. We obtain a 
diffeomorphism $\theta:\ol Z\to \ol{(Z)}_2$ defined by:
$0\mapsto 0$, $Z\setminus\{0\}\ni z\mapsto 
\frac{\rho(\|z\|)}{\|z\|}z$, and for $\|z\|=1$: $\SS_Z\ni\RR_+ z\mapsto 2 z$. 
Then $g_Z \ede\theta^*\geucl$ is a suitable true metric.
\end{proof}

Let~$g$ be the Euclidean metric on $X = \RR^n$, which, we
recall, is a compatible metric on~$\oX$.

We are ready now to prove Proposition \ref{prop.equivalence}.

\begin{proof}[Proof of Proposition \ref{prop.equivalence}]
We have proved in Corollary \ref{cor.first.ineq2} one
of the two desired inequalities ($C \delta_{\maF} \ge \rho$)
needed to prove that $\delta_{\maF}$ and $\rho$ are equivalent. Let us
prove now the opposite inequality. To that end, we can
choose any true metric on $\XGV$ (they are all equivalent). We shall
choose then on each $\oXY$ the true metric $g_Y$ defined in Lemma \ref{lemma.Z}, and on
$\prod_{Y \in \maF} \oXY$ we shall choose the product
metric of these metrics. On $\XGV$ we shall choose
the induced Riemannian metric $\overline{g}$ provided
by the (diagonal) embedding $\XGV = \overline{\delta(X)}
\subset \prod_{Y \in \maF} \oXY$ which is again a true metric. 
For this particular choice of true metric we will show
  \begin{equation}\label{eq.goal}
    \rho(x) \ge \delta_{\maF}(x).
  \end{equation}
Note that the Riemannian distance from $x$ to $y$ in $\XGV$ is bounded from below 
by the Riemannian distance of the same points $x$ and $y$ in $\prod_{Y \in \maF} \oXY$. 
This implies
  $$\rho(x)^2\geq \sum_{Y\in \maF} \dist_{g_Y} (\pi_Y(x),0)^2\geq \dist_{g_Z} (\pi_Z(x),0)^2$$
  for any $Z\in \maF$. We also have
  $$\delta_{\maF}(x) = \min\bigl\{1,\min\nolimits_{Y\in \maF}\dist(x, Y)\bigr\}
  \leq \min\{1,\dist(x, Z)\}.$$
Thus \eqref{eq.goal} will follow once we have shown
   \begin{equation}\label{comp.eq}
    \dist_{g_Z} (\pi_Z(x),0)\geq  \min\{1,\dist(x, Z)\}
  \end{equation}
for any $Z\in \maF$. Note that $\dist(x, Z)$ coincides with the Euclidean norm of $\pi_Z(x)$. 
As the distance with respect to $g_Z$ coincides with the Euclidean distance inside the unit ball 
of $\ol{X/Z}$, see Lemma~\ref{lemma.Z} inequality \eqref{comp.eq} follows immediately. This 
completes the proof of \eqref{eq.goal} and thus of the proposition.  
\end{proof}

%%%%%%%%%%%%%%%%%%%%%%%%%%%%%%%%%%%%%%%%%%
\section{Group actions on compactifications of vector spaces}\label{app.group.actions}
%%%%%%%%%%%%%%%%%%%%%%%%%%%%%%%%%%%%%%%
Here we include the details on how the constant vector fields on a vector
space extend to various compactifications.
In order for the paper to be self-contained, we also recall and extend here some facts about 
several compactifications of a vector space $X$. We will start by considering the spherical 
compactification. We use the definitions of $\RR^n_k$ and $\SS^n_k$ as in~\eqref{eq.notation.corner}.

\begin{lemma}\label{lemma.extension}
Let $X$ be a finite-dimensional real vector space. Let $A\in \GL(X)$ and $V\in X$. 
Then the affine map $p\mapsto L_{A,V}(p)\ede Ap+V$ extends to a diffeomorphism of~$\oX$.
\end{lemma}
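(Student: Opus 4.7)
The plan is to transfer the map $L_{A,V}$ to the upper hemisphere $\SS^n_1$ via the diffeomorphism $\Theta_n : \oX \to \SS^n_1$ of~\eqref{eq.def.theta}, and then produce an explicit formula that visibly extends smoothly across the equator $\{y_0=0\} = \Theta_n(\SS_X)$. Concretely, I would first reduce to the case $X = \RR^n$ by choosing a linear isomorphism, and then consider the conjugated map $\widetilde{L} \ede \Theta_n \circ L_{A,V} \circ \Theta_n^{-1}$, which a priori is only defined on the open set $\{y_0 > 0\} \subset \SS^n_1$.

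For $y = (y_0, y') \in \SS^n_1$ with $y_0 > 0$, writing $x = \Theta_n^{-1}(y) = y_0^{-1} y'$ and computing directly, a short manipulation gives
\begin{equation*}
  \widetilde{L}(y) \seq \frac{1}{\sqrt{y_0^2 + \|Ay' + y_0 V\|^2}}\bigl(y_0,\ Ay' + y_0 V\bigr)\,.
\end{equation*}
The key observation is that the denominator extends smoothly and is strictly positive on all of $\SS^n_1$: if $y_0 = 0$, then $\|y'\| = 1$, and since $A \in \GL(X)$ we have $\|Ay'\| > 0$, so the expression under the square root remains bounded away from zero in a neighborhood of the equator. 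Hence the right-hand side defines a smooth map $\widetilde{L} : \SS^n_1 \to \SS^n_1$, which specializes at $y_0 = 0$ to $(0, y') \mapsto \|Ay'\|^{-1}(0, Ay')$, agreeing with the natural extension that sends the ray $\RR_+ v \in \SS_X$ to the ray $\RR_+ A v$.

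Transferring back via $\Theta_n^{-1}$, this yields a smooth extension $\overline{L_{A,V}} : \oX \to \oX$ of $L_{A,V}$. Finally, since the inverse affine map $L_{A,V}^{-1} = L_{A^{-1},-A^{-1}V}$ is of exactly the same form, applying the same construction produces a smooth extension $\overline{L_{A^{-1},-A^{-1}V}}$; by the uniqueness of continuous extensions from the dense set $X \subset \oX$, the two extensions are mutually inverse. This shows that $\overline{L_{A,V}}$ is a diffeomorphism of $\oX$. The only nontrivial point is verifying smoothness across the equator, for which the explicit formula above does all the work; the invertibility of $A$ is used precisely to ensure the denominator never vanishes.
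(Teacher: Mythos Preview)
Your proof is correct and follows essentially the same approach as the paper: the paper also conjugates by $\Theta_n$, writes the resulting map on $\SS^n_1$ as the normalization $f_{A,V}(t,p) = F_{A,V}(t,p)/\|F_{A,V}(t,p)\|$ of the linear map $F_{A,V}(t,p) = (t, Ap + tV)$ on $\RR^{n+1}$ (which is exactly your formula), and then obtains the inverse from $f_{A^{-1},-A^{-1}V}$. The only cosmetic difference is that the paper observes the group-homomorphism identity $f_{A,V}\circ f_{A',V'} = f_{AA',\,V+AV'}$ directly, whereas you invoke density to match the two extensions; both arguments are equivalent.
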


\begin{proof}
Let us assume first that $V=\RR^n$. As a first, step we want to construct a smooth map 
$f_{A,V}:\SS^n_1\to \SS^n_1$ such that $\Theta_n^{-1}\circ f_{A,V}\circ \Theta_n$ is an 
extension of the affine map $L_{A,V}$, where~$\Theta_n$ is the map $\oX\to \SS^n_1$ given 
by \eqref{eq.def.theta}. As a second step we prove that it is a diffeomorphism.

At first we consider the linear map $F_{A,V}:\RR^{n+1}\to \RR^{n+1}$, $(t,p)\mapsto (t,Ap+tV)$. 
Obviously $F_{A,V}$ maps $\RR^{n+1}_1$ to itself and $F_{A,V}((1,p))= (1, L_{A,V}(p))$. It is 
easy to check that the smooth map 
\begin{equation*}
   f_{A,V}\bigl((t,p)\bigr) \ede \frac{F_{A,V}((t,p))}{\|F_{A,V}((t,p))\|}
\end{equation*}
has the extension property for $L_{A,V}$ described above. It is obvious to see that
$F_{A,V}\circ F_{A',V'} = F_{A A', V+AV'}$ and as a consequence 
$f_{A,V}\circ f_{A',V'} = f_{A A', V+AV'}$. It follows that $f_{A^{-1}, -A^{-1}V}$ is the inverse 
to $f_{A,V}$, and hence $f_{A,V}$ is a diffeomorphism. The case of a general vector space $X$ is obtained
by choosing a linear isomorphism $X \simeq \RR^n$. In view of the results proved, the 
resulting smooth structure on $\oX$ and the smoothness of the affine maps on $\oX$
do not depend on the choice of the isomorphism $X \simeq \RR^n$.
\end{proof}

Obviously the diffeomorphism $f_{A,V}:\ol X\to \ol X$ restricts to a diffeomorphism 
of the boundary, namely the sphere at infinity $f_{A,V}|_{\SS_X}\colon \SS_X\to \SS_X$.
From the proof of the last lemma, we also get:

\begin{corollary}
The extension constructed in Lemma \ref{lemma.extension}
above yields a group homomorphism $\Aff(X)\to \Diff(\oX)$, 
$L_{A,V}\mapsto f_{A,V}$. For a translation $L_{1,V}:X\to X$ by $V$, we have
$$f_{1,V}|_{\SS_X}=\id_{\SS_X}.$$
\end{corollary}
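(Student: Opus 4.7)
The plan is to derive both statements as immediate consequences of the explicit formulas established in the proof of Lemma~\ref{lemma.extension}. For the homomorphism property, I would first verify by direct computation that composition in $\Aff(X)$ obeys $L_{A,V}\circ L_{A',V'} = L_{AA',\,V+AV'}$, since $L_{A,V}(L_{A',V'}(p)) = A(A'p+V')+V = AA'p + (V+AV')$. This matches exactly the identity $f_{A,V}\circ f_{A',V'} = f_{AA',\,V+AV'}$ already noted in that proof, so the assignment $L_{A,V}\mapsto f_{A,V}$ preserves composition. Setting $A=\id_X$ and $V=0$ gives $F_{1,0}=\id$ and hence $f_{1,0}=\id_{\oX}$, so units are sent to units. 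Together with Lemma~\ref{lemma.extension}, which asserts that each $f_{A,V}$ is a diffeomorphism, this yields the desired group homomorphism $\Aff(X)\to \Diff(\oX)$.

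For the second assertion, I would evaluate $f_{1,V}$ explicitly on the sphere at infinity. Under the identification $\Theta_n\colon \oX \to \SS^n_1$ from \eqref{eq.def.theta} (after fixing a linear isomorphism $X\simeq \RR^n$), the set $\SS_X$ corresponds to the points $(0,p)\in \SS^n_1$ with $\|p\|=1$. The definition $F_{1,V}((t,p))=(t,\,p+tV)$ from the proof of Lemma~\ref{lemma.extension}, evaluated at $t=0$, gives $F_{1,V}((0,p))=(0,p)$, so that
\begin{equation*}
   f_{1,V}\bigl((0,p)\bigr) \seq \frac{(0,p)}{\|(0,p)\|} \seq (0,p).
\end{equation*}
Transporting back through $\Theta_n$ yields $f_{1,V}|_{\SS_X}=\id_{\SS_X}$.

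There is essentially no genuine obstacle here; both claims reduce to bookkeeping with formulas already written down. The only minor point worth flagging is that for a general finite-dimensional $X$ (not literally $\RR^n$), one must check that the construction is independent of the chosen linear isomorphism $X\simeq\RR^n$, which is immediate because a change of such an isomorphism is itself a map of the form $L_{A,0}$ and intertwines the two versions of $f_{A,V}$ through the homomorphism property just established.
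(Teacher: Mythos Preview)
Your proof is correct and follows exactly the approach the paper intends: the paper's own proof is the single sentence ``immediate from the formula defining the extensions of affine maps,'' and you have simply written out the details of that computation (the composition law $f_{A,V}\circ f_{A',V'}=f_{AA',V+AV'}$ from the proof of Lemma~\ref{lemma.extension} and the evaluation of $F_{1,V}$ at $t=0$). There is nothing to add or correct.
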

	
\begin{proof}
The proof is immediate from the formula defining the extensions of 
affine maps to the spherical compactification (see the proof of Lemma
\ref{lemma.extension}).
\end{proof}

The map in the corollary defines a Lie group action of $\Aff(X)$ on $\oX$, and the derivative 
of this Lie group action yields a Lie algebra action of $\aff(X)$, the Lie algebra of the 
affine group of $X$ on $\oX$. We may restrict to the Lie group of translations, resp. to the 
corresponding Lie algebra. This Lie algebra is $X$ with the trivial bracket. We thus have a 
Lie algebra action of $X$ on $\oX$. If a Lie algebra $\frak{g}$ acts on a manifold with corners 
$M$, then this is given by a Lie algebra homomorphism $\maL :\frak{g}\to \Gamma(TM)$.
In the above situation $X=\frak{g}$ and $\maL$ maps a vector of $X$ to the corresponding constant 
vector field in $\Gamma(X)$. As the Lie algebra $X$ acts on $\oX$, any constant vector field on~$X$
extends to a vector field on $\oX$. As the Lie group of translations acts by the identity on 
$\SS_X$, the Lie group $X$ acts trivially on~$\SS_X$ which proves that the smooth extension of 
a constant vector field to $\ol{X}$ vanishes on~$\SS_X$.
We thus have proven the following.

\begin{corollary}\label{cor.ext.zero}
  For any $V\in X$, the vector field
\begin{equation*}
   W(p) \ede 
   \begin{cases} 
   V & \text{ if }\ p\in X \\ 
    0 & \text{ if }\ p\in \SS_X\,,
  \end{cases}
\end{equation*}
is a smooth vector field on $\oX$.  
\end{corollary}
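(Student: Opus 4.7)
The plan is to derive the corollary directly from the group-action framework just established, using that translations extend smoothly to $\oX$ and act trivially on the boundary sphere $\SS_X$. The point is that once we have a smooth Lie group action of the translation group $(X,+)$ on the compact manifold with corners $\oX$, the associated infinitesimal action automatically gives smooth extensions to $\oX$ of all constant vector fields, and the tangency/vanishing properties on $\SS_X$ are read off from the behavior of the group action on the boundary.

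First I would invoke Lemma~\ref{lemma.extension} together with the corollary that precedes the statement: the map $V \mapsto f_{1,V}$ is a group homomorphism from $(X,+)$ into $\Diff(\oX)$, and each $f_{1,V}$ restricts to the identity on $\SS_X$. A quick check from the explicit formula in the proof of Lemma~\ref{lemma.extension} shows that the map $X \times \oX \to \oX$, $(V,p) \mapsto f_{1,V}(p)$, is jointly smooth (it is built from the smooth map $F_{1,V}$ on $\RR^{n+1}$ followed by normalization on $\SS^n_1$, which is smooth since the denominator $\|F_{1,V}((t,p))\|$ does not vanish on the relevant set). Hence we have a smooth Lie group action of $(X,+)$ on $\oX$.

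Next I would take the derivative of this action at the identity of the group to obtain the corresponding Lie algebra action, i.e.\ a Lie algebra homomorphism $\maL : X \to \Gamma(T\oX)$ defined by
\begin{equation*}
   \maL(V)(p) \ede \tfrac{d}{dt}\Big|_{t=0} f_{1,tV}(p)\,, \qquad p \in \oX\,.
\end{equation*}
Since the flow $(t,p) \mapsto f_{1,tV}(p)$ is smooth on $\RR \times \oX$, the vector field $\maL(V)$ is smooth on $\oX$. On the interior $X \subset \oX$, we have $f_{1,tV}(p) = p + tV$, so $\maL(V)(p) = V$ for $p \in X$. On the boundary, $f_{1,tV}|_{\SS_X} = \id_{\SS_X}$ for every $t$, so differentiating in $t$ gives $\maL(V)(p) = 0$ for $p \in \SS_X$. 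Thus $\maL(V)$ agrees with the vector field $W$ defined in the statement, which shows that $W$ is smooth.

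The main obstacle, if any, is purely bookkeeping: one must verify the joint smoothness of the action (i.e.\ smoothness of $f_{1,V}$ in the parameter $V$), but this is transparent from the formula for $f_{A,V}$ given in the proof of Lemma~\ref{lemma.extension}. Everything else is a formal consequence of passing from a Lie group action to its infinitesimal generator.
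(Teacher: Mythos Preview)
Your proposal is correct and follows essentially the same approach as the paper: differentiate the smooth translation action of $(X,+)$ on $\oX$ to get the Lie algebra action, identify the resulting vector field as the constant field $V$ on the interior, and use the triviality of the action on $\SS_X$ to conclude vanishing there. The paper's argument is the paragraph immediately preceding the corollary, and your write-up is simply a slightly more explicit version of it (in particular, you spell out the joint smoothness check from the formula for $f_{A,V}$, which the paper leaves implicit).
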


We now turn to more involved compactifications. Let $\maS$ be a finite, clean 
semilattice of closed submanifolds of $\SS_X$. We consider 
the compactification, $M\ede [\oX: \maS]$, an example being the Georgescu--Vasy 
compactification $\XGV$ discussed in~\ref{sec.two.bups}.

The action of the Lie group $X$ on $\oX$ constructed above is the identity on 
any $P \in \maS$, as $P \subset \SS_X$. Thus \citeschrSpace{Theorem}{thm.cor.main1} 
or Theorem~\ref{theorem.mainAMN1} tells us that the Lie group action of $X$ on $\oX$ 
lifts (uniquely) to an action of $X$ on $M$. The action is by diffeomorphisms, and,
as~$X$ is connected, the action of an element of~$X$ on~$M$ maps each boundary face 
of~$M$ to itself. The corresponding Lie algebra action yields a Lie algebra homomorphism 
$\maL\colon X = \Lie(X) \to \Gamma(TM)$. Its image consists 
of vector fields tangent to the boundary of $M$ as the corresponding Lie group action 
preserves all boundary faces. We have thus obtained the following result:

\begin{lemma}\label{lemma.vect.GV.extend}
Let $X$ and $\maF$ be as above and $M\ede [\oX:\maF]$. Then any constant vector field 
on $X$ extends to a smooth vector field on $M$ that is tangent to the boundary.
\end{lemma}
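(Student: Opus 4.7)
The plan is to formalize the argument already sketched in the paragraphs immediately preceding the lemma. The core idea is that translations of $X$ fix $\SS_X$ pointwise, so they preserve every closed submanifold of $\SS_X$; hence the translation action lifts through the iterated blow-up, and the desired smooth vector field extensions arise as the infinitesimal generators of this lifted action. (In what follows I use $\maS$ to refer to the semilattice denoted $\maF$ in the lemma statement, matching the notation used in the preceding paragraphs.)

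First, I would realize $X$ as an abelian Lie group acting on $\oX$ by translations, using the extension $L_{1,V} \mapsto f_{1,V}$ provided by Lemma~\ref{lemma.extension}. Since $f_{1,V}$ restricts to the identity on $\SS_X$, and every $P \in \maS$ sits inside $\SS_X$, the $X$-action preserves $\maS$ (in fact, it fixes each $P$ pointwise). Thus Theorem~\ref{theorem.mainAMN1} applies, delivering a smooth $X$-action on $M = [\oX:\maS]$ by diffeomorphisms for which the blow-down map $\beta : M \to \oX$ is equivariant.

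Second, I would differentiate. Identifying $\Lie(X) = X$ (with trivial bracket), we obtain a Lie algebra homomorphism $\maL : X \to \Gamma(TM)$. By equivariance of $\beta$ on the dense open subset $\beta^{-1}(X \smallsetminus \unionma{S}) \simeq X \smallsetminus \unionma{S}$, where $\beta$ is the identity and the $X$-action is by translations, the vector field $\maL(V)$ restricts to the constant vector field $V$ on this open subset. Extending by smoothness, $\maL(V)$ is the sought extension.

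Third and finally, I would verify tangency to the boundary. Every element $v \in X$ can be joined to $0 \in X$ by a smooth path (namely the line segment), yielding a path of diffeomorphisms of $M$ starting at the identity. Since diffeomorphisms permute boundary faces of a given codimension and send them to themselves at $t=0$, continuity forces each boundary face to be sent to itself for all $t$. Differentiating at the identity shows that $\maL(V)$ is tangent to every boundary hyperface, and hence to $\partial M$. There is no real obstacle: Lemma~\ref{lemma.extension} and Theorem~\ref{theorem.mainAMN1} together perform all the heavy lifting, and only the compatibility of the infinitesimal action with constant vector fields on $X$ (immediate from equivariance of $\beta$) needs to be recorded.
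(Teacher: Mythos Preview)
Your proposal is correct and follows essentially the same approach as the paper: the argument you formalize is precisely the one sketched in the paragraphs immediately preceding the lemma, using the translation action on $\oX$ (trivial on $\SS_X$, hence preserving $\maS$), lifting it via Theorem~\ref{theorem.mainAMN1}, and differentiating to obtain boundary-tangent vector fields via connectedness of $X$. Your version adds slightly more detail (explicitly checking that $\maL(V)$ restricts to the constant field $V$ on the interior, and spelling out the path argument for boundary-face preservation), but there is no substantive difference.
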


We will now discuss whether the smooth extension of a constant vector field to 
$M \coloneqq [\oX: \maS]$ still vanishes on the boundary.

\begin{example}
Let $X$ and $\maS = \{\SS_{Y} \}$, $Y \subset X$, $Y \neq \{0\}$. Then we have seen
that the constant vector fields $v \in X$ extend to a vector fields on $\ol{X}$ 
vanishing at the boundary $\SS_{X}$. One can show that the lift of $v$ to $
[\ol{X} : \SS_Y ]$ vanishes at all the boundary faces of 
$[\oX: \SS_Y]$ if, and only if $v \in Y$.
If $v\notin Y$, then its lift 
does not vanish on the boundary hyperface 
emerging from $\SS_Y$, but it still vanishes on the other boundary hyperface of $M$ 
(respectively, in the exceptional  case $\dim Y=\dim X -1$, on the other two boundary 
hyperfaces of $[\oX: \SS_Y]$).
\end{example}

%%%%%%%%%%%%%%%%%%%%%%%%
\section{A splitting lemma}\label{app.spherical.compact}
%%%%%%%%%%%%%%%%%%%%%%%%%%%%%%%%%%%%%%%%%%

We now include the promised details explaining how Lemma \ref{lemma.newPsiYZ}
follows from \citeschrSpace{Lemma}{blow.sphere}. In particular,
we consider the relations between (blow-ups of) the (spherical)
compactification of the vector space~$X$ and the compactification 
of linear subspaces, providing,
in particular, the full details of the proof of Lemma~\ref{lemma.newPsiYZ}.
So let~$Y$ be a linear subspace of~$X$.
We have shown in \citeschrSpace{Proposition}{prop.X/Y} that the quotient map $X\to X/Y$
extends uniquely to a smooth map  $\psi_Y:[\oX:\SS_Y]\to \ol{X/Y}$.
We will show that $\psi_Y$ is a trivial fibration, a trivialization will be provided by 
the choice of any complement; 
in other words: $[\oX:\SS_Y]$ is a product of manifolds with corners, and $\psi_Y$ 
the projection to one of its factors.

In order to work on the space $[ \oX : \mathbb{S}_X ]$, 
it is convenient to have a better understanding of its structure,
including of its smooth structure. The following result due to Melrose
(see \citeschrSpace{Lemma}{blow.sphere} and the references therein) provide the
needed initial results.

In the following lemma we write elements $\RR^{m+m'+1}_{k+k'}$ in the following way. At first, up to permutation of the coordinates $\RR^{m+m'+1}_{k+k'}$ coincides with $\RR^{m}_k\times \RR^{m'+1}_{k'}$, 
which we write as  $\RR^{m+m'+1}_{k+k'}\cong \RR^{m}_k\times \RR^{m'+1}_{k'}$. Then a vector in 
$\RR^{m+m'+1}_{k+k'}$ will be written as a pair $(\eta,\mu)$ with respect to the product 
$\RR^{m}_k\times \RR^{m'+1}_{k'}$.

\begin{lemma}
\label{lemma.blow.sphere}
Let $\SS_{k, k'}^{m, m'} := \SS^{m+m'} \cap \big ( \RR^{m}_k
  \times \RR^{m'+1}_{k'} \big ) \, \cong\, \SS^{m+m'}_{k+k'}$ . If we define
\begin{equation*}
    \Psi : \SS_{k, k'}^{m, m'} \smallsetminus \big ( \{0\} \times
    \SS^{m'}_{k'} \big ) \ \to \ \SS^{m-1}_k \times \SS^{m'+1}_{k'+1}
    \,, \quad \Psi(\eta,\mu) \seq \Big (\, \frac{\eta}{|\eta|},\, \big
    (|\eta|,\mu \big ) \, \Big ) \,,
\end{equation*}
then  $\Psi$ extends to a diffeomorphism
\begin{equation}\label{def.psi.sphere}
  \widetilde \Psi : [\SS_{k,k'}^{m,m'} : \{0\} \times
    \SS^{m'}_{k'}]\ \stackrel{\sim}{\longrightarrow}\ \SS^{m-1}_{k}
  \times \SS^{m'+1}_{k'+1}
\end{equation}
Up to a suitable permutation of coordinates, we permute the coordinates and if we identify
$\mathbb{S}^m_k$ with  $\{0\} \times \mathbb{S}^m_k$ as subset of $\mathbb{S}^{m+m'}_{k+k'}$,
we obtain a diffeomorphism :
\begin{equation*}
  \widetilde \Psi : [\SS_{k+k'}^{m+m'} : \{0\} \times
    \SS^{m'}_{k'}]\ \stackrel{\sim}{\longrightarrow}\ \SS^{m-1}_{k}
  \times \SS^{m'+1}_{k'+1}
\end{equation*}
\end{lemma}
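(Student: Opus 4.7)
The plan is to exhibit an explicit two-sided inverse $\Phi$ of $\widetilde \Psi$ and verify its smoothness by reduction to a standard local product model. Define $\Phi : \SS^{m-1}_k \times \SS^{m'+1}_{k'+1} \to \SS_{k,k'}^{m,m'}$ on underlying sets by $\Phi(e, (r, \mu)) := (re, \mu)$. First I would verify well-definedness: since $r \ge 0$ and $e \in \RR^m_k$, we have $re \in \RR^m_k$; since $\mu \in \RR^{m'+1}_{k'}$, the second factor is in the correct orthant; and the unit-norm condition $r^2 + |\mu|^2 = 1$ from $(r,\mu) \in \SS^{m'+1}$ gives $|re|^2 + |\mu|^2 = 1$. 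A direct calculation then confirms $\Psi \circ \Phi = \id$ and $\Phi \circ \Psi = \id$ on the open dense subsets $\{r > 0\}$ and $\{|\eta| > 0\}$ respectively, so by the density results of Proposition \ref{prop.density} and the uniqueness of continuous extensions, it suffices to show that $\Phi$ lifts to a smooth map into the blow-up $[\SS_{k,k'}^{m,m'} : \{0\} \times \SS^{m'}_{k'}]$ and that the resulting map is smooth in both directions.

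To establish smoothness across the exceptional set $\{r = 0\}$, I would work locally near a point $(0, \mu_0) \in \{0\} \times \SS^{m'}_{k'}$ of boundary depth $b \le k'$ in $\SS^{m'}_{k'}$. In a chart adapted to $\mu_0$, the source $\SS_{k,k'}^{m,m'}$ is locally diffeomorphic to $\RR^m_k \times \RR^{m'}_b$, with the submanifold $\{0\} \times \SS^{m'}_{k'}$ corresponding to $\{0\} \times \RR^{m'}_b$. Applying Proposition \ref{prop.lemma.product} (compatibility of blow-up with products) yields
\[
\bigl[\RR^m_k \times \RR^{m'}_b : \{0\} \times \RR^{m'}_b\bigr] \simeq [\RR^m_k : \{0\}] \times \RR^{m'}_b \simeq \SS^{m-1}_k \times [0,\infty) \times \RR^{m'}_b,
\]
via the standard polar-coordinate resolution of the origin in $\RR^m_k$. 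In these coordinates $\Psi$ becomes precisely the polar-spherical substitution $(\eta,\mu) \mapsto (\eta/|\eta|, |\eta|, \mu)$. Correspondingly, near $(e_0, (0, \mu_0))$ the target $\SS^{m-1}_k \times \SS^{m'+1}_{k'+1}$ is itself locally diffeomorphic to $\SS^{m-1}_k \times [0,\infty) \times \RR^{m'}_b$, with $r$ playing the role of the additional boundary defining function coming from the first coordinate of $\SS^{m'+1}_{k'+1}$. With these two matching local product decompositions, $\widetilde \Psi$ is the identity map on the nose, hence a diffeomorphism.

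The main obstacle will be the careful bookkeeping of boundary depths and the identification of the exceptional hyperface. Specifically, one must check that the inward-pointing spherical normal bundle $\SS(N^M_+P)$ (for $M = \SS_{k,k'}^{m,m'}$ and $P = \{0\} \times \SS^{m'}_{k'}$) is canonically $\SS^{m-1}_k \times \SS^{m'}_{k'}$, and that this agrees, via $\widetilde \Psi$, with the locus $\{r = 0\} \subset \SS^{m-1}_k \times \SS^{m'+1}_{k'+1}$; this is ensured by the explicit form of $\Phi$ which sends $r = 0$ to $(0,\mu)$ while retaining the direction $e$ as the normal information. Once the local models are aligned, the second statement of the lemma is immediate: after identifying $\SS_{k,k'}^{m,m'}$ with $\SS^{m+m'}_{k+k'}$ via a permutation of coordinates sending the $\mu$-coordinates with indices in $\{1,\ldots,k'\}$ to the first $k'$ non-negative slots, one transports $\widetilde \Psi$ to the claimed diffeomorphism on $[\SS^{m+m'}_{k+k'} : \{0\} \times \SS^{m'}_{k'}]$ without further work.
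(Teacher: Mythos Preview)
The paper does not actually prove this lemma: immediately after the statement it notes that the result is due to Melrose and that a proof in the present notation appears in \cite{AMN1} (Lemma~\refschrSpace{blow.sphere}). So there is no in-paper argument to compare against; your proposal supplies a self-contained proof where the paper simply cites one.

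Your argument is correct and is in fact the standard one. The explicit inverse $\Phi(e,(r,\mu))=(re,\mu)$ is exactly the right map, and the local analysis reduces, as you say, to the polar-coordinate identification $[\RR^m_k:\{0\}]\simeq \SS^{m-1}_k\times[0,\infty)$ combined with a chart on $\SS^{m'}_{k'}$ near $\mu_0$. One small comment: invoking Proposition~\ref{prop.density} and Proposition~\ref{prop.lemma.product} is heavier machinery than needed here; the case of a single blow-up along a \psubmanifold{} with product structure is immediate from the definition of the smooth structure on the blow-up (see e.g.\ \citeschrSpace{Lemma}{lemma.product} for $k=1$), and density of the complement is built into that definition. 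But this is purely cosmetic---your bookkeeping of boundary depths and the identification of the front face $\SS(N^M_+P)\simeq \SS^{m-1}_k\times\SS^{m'}_{k'}$ with the locus $\{r=0\}$ is accurate.
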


A proof of this result with our notations can be found in \citeschrSpace{Lemma}{blow.sphere} 
In the following proposition, we shall identify $X/Y$ with $Y^\perp$, for
some fixed scalar product $\<\twoargu\>$ on~$X$.

\begin{proposition}\label{prop.details.newPsiYZ}
Let~$X$ be a real finite vector space and $Y \neq 0$ be a linear subspace of~$X$. 
Then there is a diffeomorphism
\begin{equation*}
  \Psi_Y \seq (\psi_Y, \chi)\colon
   [ \overline{X} : \SS_Y ] \to  \oXY \times \oY
\end{equation*}
such that the restriction of $\psi_Y$ to $\oX\setminus \SS_Y$ is the extension 
of the canonical projection $\pi_Y\colon X\to X/Y$ and $\chi(y)=y$ if $y \in Y$.
Moreover, the blow-down map $\beta_{\oX, \SS_Y}$  is determined by $\Xi_Y \coloneqq 
\beta_{\oX, \SS_Y} \circ \Psi_Y^{-1} : \oXY \times \oY \to \oX$ which is given by:
\begin{equation*}
 \Xi_Y(z,y) \seq
 \begin{cases} 
    \ y & \ \mbox{ if } y \in \SS_Y\,, \\
    \ z +  \sqrt{1 + \<z,z\>}\, y & 
    \ \mbox{ if } (z,y) \in X/Y \times Y \,.
  \end{cases}
\end{equation*}
\end{proposition}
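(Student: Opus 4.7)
The plan is to deduce Proposition~\ref{prop.details.newPsiYZ} from Melrose's splitting Lemma~\ref{lemma.blow.sphere} by choosing suitable indices, and then to verify the explicit formula for $\Xi_Y$ by tracing through the stereographic identifications. Using the scalar product, I would fix an orthogonal decomposition $X = Y^\perp \oplus Y$, identifying $X/Y$ with $Y^\perp$ and writing $x \in X$ as $x = z + y$ with $z \in Y^\perp$ and $y \in Y$. Let $n = \dim X$ and $p = \dim Y$. The map $\Theta_n$ of~\eqref{eq.def.theta} identifies $\oX$ with $\SS^n_1 \subset \RR^{n+1}_1$; permuting coordinates, one views $\RR^{n+1}_1 = \RR^{n-p+1}_1 \times \RR^p$, with the first factor carrying the $[0,\infty)$-coordinate together with $Y^\perp$ and the second carrying $Y$. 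A direct inspection then shows that $\Theta_n(\SS_Y)$ corresponds under this permutation to $\{0\} \times \SS^{p-1}$, where $\SS^{p-1}$ is the unit sphere of $Y$.

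Lemma~\ref{lemma.blow.sphere}, applied with $m = n-p+1$, $k = 1$, $m' = p-1$, $k' = 0$, then produces a diffeomorphism
\[\widetilde\Psi : \bigl[\SS^n_1 : \{0\} \times \SS^{p-1}\bigr] \xrightarrow{\sim} \SS^{n-p}_1 \times \SS^p_1 \,.\]
Post-composing on the target with $\Theta_{n-p}^{-1}$ on the first factor and with $\Theta_p^{-1}$ on the second, and using $\Theta_n^{-1}$ on the source, I obtain the desired diffeomorphism $\Psi_Y = (\psi_Y, \chi) : [\oX : \SS_Y] \to \oXY \times \oY$, with $\psi_Y$ extending the linear projection $\pi_Y$ (by uniqueness of the extension from \citeschrSpace{Proposition}{prop.X/Y}).

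The explicit formula for $\Xi_Y$ then follows by a short direct calculation. For $x = z + y$ with $y \in Y$ finite, one has $\Theta_n(x) = (\eta, \mu)/\sqrt{1 + \|z\|^2 + \|y\|^2}$ with $\eta = (1, z) \in \RR^{n-p+1}_1$ and $\mu = y \in \RR^p$, so that $|\eta| = \sqrt{1+\|z\|^2}/\sqrt{1+\|z\|^2+\|y\|^2}$, giving $\eta/|\eta| = (1,z)/\sqrt{1+\|z\|^2} = \Theta_{n-p}(z)$ and $\mu/|\eta| = y/\sqrt{1+\|z\|^2}$. Thus $\psi_Y(x) = z$ and $\chi(x) = y/\sqrt{1+\|z\|^2}$; inverting yields $\Xi_Y(z, y') = z + \sqrt{1+\|z\|^2}\, y'$ on the interior, and specializing to $z = 0$ confirms $\chi(y) = y$ for $y \in Y$. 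On the front face, where $|\eta| = 0$ forces $\mu \in \SS^{p-1}$ to record a unit direction $v \in Y$, the blow-down contracts the fiber $\SS(N^{\oX}_+\SS_Y)_v \cong \oXY$ to the base point, producing $\Xi_Y(z, v) = v$, which matches the second case of the claimed formula. The main obstacle is the coordinate bookkeeping across the several stereographic identifications, and in particular tracking how the normalization in $\Theta_n$ produces the factor $\sqrt{1+\|z\|^2}$ exactly in the place where the formula predicts.
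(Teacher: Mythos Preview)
Your proposal is correct and follows essentially the same approach as the paper: both reduce to Lemma~\ref{lemma.blow.sphere} via the stereographic identifications $\Theta_n$, $\Theta_{n-p}$, $\Theta_p$ with the same indices $m=n-p+1$, $k=1$, $m'=p-1$, $k'=0$, and then verify the claimed properties by computing $\widetilde\Psi\circ\Theta_n$ on the interior. You in fact give a bit more detail on the formula for $\Xi_Y$ than the paper, which simply records that ``a long computation'' yields it; your inversion $y' \mapsto y = \sqrt{1+\|z\|^2}\,y'$ is exactly that computation.
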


\begin{proof}
The proof is a modification of the proof of \cite[Proposition~5.2]{AMN1}. 
The map $\psi_Y$ is the same as the one defined in the aforementioned Proposition
and used before in this paper. Let us recall briefly the construction of this 
map $\psi_Y$ and then introduce the map~$\chi$. Let $\dim Y=p$ and $\dim X=n$. Without 
loss of generality, we may assume that $Y=\RR^p\subset X=\RR^n$. Then 
$\Theta_p(\SS_Y) = \partial \SS^p_1 \cong \SS^{p-1}$ may be viewed  as subset 
of $\Theta_n(\SS_X) = \partial \SS^n_1 \cong \SS^{n-1}$.
Using this identification, we can obtain
\begin{equation*}
     [\overline{X} : \SS_Y] \simeq^{\Theta_n^\beta} [\SS^n_1 : \SS^{p-1}] \, ,
\end{equation*}
where $\Theta^\beta_n$ is the lift of $\Theta_n$ to the two blow-up spaces.
Using Lemma~\ref{lemma.blow.sphere} for $m=n-p+1$ and $m'=p$, we have the diffeomorphism
$[\SS^n_1 : \SS^{p-1}] \simeq \SS^{n-p}_1 \times \SS^p_1$. We go back to the compactified 
vector spaces using  $(\Theta_{n-p}^{-1},\Theta_p^{-1}) : \SS^{n-p}_1 \times \SS^p_1 \to 
\oXY \times \oY$. Composing these diffeomorphisms, we obtain:
\begin{equation*}
    [\oX : \SS_Y] \simeq^{\Theta^\beta_n} [\SS^n_1 : \SS^{p-1}] 
    \simeq^{\Psi} \SS^{n-p}_1 \times \SS^p_1
\simeq^{(\Theta_{n-p}^{-1},\Theta_p^{-1})} \oXY \times \oY \, .
\end{equation*}
At this point, the reader might want to check the definition of $\Theta_n$ and $\Theta_n^{-1}$
defined in \eqref{eq.def.theta} and \eqref{eq.def.theta.inv}, as well as $\Psi$ defined in \eqref{def.psi.sphere}.
If $v=(v_Y,v_\perp) \in Y \oplus Y^\perp$, we obtain
\begin{equation}
  \widetilde \Psi \circ \Theta_n (v) \,=\, \Psi \Big (\,
  \frac{1}{\<v\>} (1,v)\, \Big) \,=\, \Big (\, \frac{1}{
    \<v_\perp\>}\, (1,v_\perp),\ \frac{1}{\<v\>}\, (
  \<v_\perp\> , v_Y)\, \Big )\,.
\end{equation}
For the first component, we obtain
$\Theta_{n-p}^{-1}\left(\frac{1}{ \<v_\perp\>}(1, v_\perp)\right)=v_\perp=\pi_Y(v)$. 
Hence we have the required property
for $\Psi_Y$.
In the second component, if $v \in Y$, that is $v_\perp=0$, we obtain 
$ \<v_\perp\>=1$ and $ \<v\>= \<v_Y\>$ then
\begin{equation*}
   \Theta_p^{-1}\left(\frac{1}{\<v_Y\>}(1,v_Y)\right)=v_Y \,.
\end{equation*}
Hence we have $\chi(y)=y$ for $y \in Y$.
A long computation gives the required propriety for $\Xi_Y=\beta_{\oX,\SS_Y} \circ \Psi_Y^{-1}$. 
\end{proof}

Note that in contrast to the map $\psi_Y$, the constructions of the maps $\chi$ and $\Xi_Y$ 
depend on more than the space $X$ and its subspace $Y$, it also depends on the scalar product 
(and thus indirectly on the choice of the orthogonal complement). Thus if $A:(X,Y)\to (X',Y')$ 
is an automorphism of pairs of vector space, and if we denote maps induced by $A$ also by~$A$, 
then we have the naturality relation $\pi_{Y'}\circ A= A\circ \pi_Y$, but in general  $\chi$ 
and $\Xi_Y$ are not natural, \ie $\chi\circ A\neq A \circ \chi$ and  $\Xi_{Y'}\circ A
\neq A\circ \Xi_Y$.

\ifarxivversion
\section{More about clean intersections}\label{app.clean.intersections}
Here we include some technical details for the
fact that our semilattices coming from linear subspaces are clean.

\begin{lemma}\label{lem.clean.spec}
Let $Y,Z$ be two linear subspaces of $X$. Then each of the following pairs 
intersects cleanly:
\begin{enumerate}
\item $\overline{Y}$ and $\overline{Z}$,
\item $\SS_Y$ and $\SS_Z$,
\item $\SS_Y$ and $\overline{Z}$.
\end{enumerate}
\end{lemma}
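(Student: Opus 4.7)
The plan is to transfer the problem via the parametrization $\Theta_n : \oX \xrightarrow{\sim} \SS^n_1$ of~\eqref{eq.def.theta} to the hemisphere $\SS^n_1 = \SS^n \cap (\RR_+ \times \RR^n)$, viewed as a \psbmanifold{} of the enlargement $\SS^n$. Under $\Theta_n$, one reads off from the formulas that $\overline{Y}$ corresponds to $\SS^n_1 \cap (\RR \times Y)$ while $\SS_Y$ corresponds to $\SS^n \cap (\{0\} \times Y)$. Since $\SS_X$ and $X$ are disjoint in $\oX$, the three pairwise set-theoretic intersections simplify to
\begin{equation*}
   \overline{Y} \cap \overline{Z} \seq \overline{Y \cap Z}, \qquad \SS_Y \cap \SS_Z \seq \SS_{Y \cap Z}, \qquad \SS_Y \cap \overline{Z} \seq \SS_{Y \cap Z},
\end{equation*}
and these are all \psbmanifolds{} of $\oX$ (one sees this, for instance, by choosing an orthonormal basis adapted to $Y \cap Z \subset Y \cap Z^\perp$ and using local angular coordinates on $\SS^n_1$).

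The central linear-algebraic input is the classical identity
\begin{equation*}
   T_p(\SS^n \cap V) \seq V \cap p^\perp,
\end{equation*}
valid for any linear subspace $V \subset \RR^{n+1}$ and any $p \in \SS^n \cap V$. Indeed, any smooth curve $\gamma$ in $\SS^n \cap V$ with $\gamma(0) = p$ satisfies $\gamma'(0) \in V$ and $\gamma'(0) \perp p$ (from differentiating $\|\gamma\|^2 \equiv 1$); conversely, for any $v \in V \cap p^\perp$, the great-circle arc $\gamma(t) = \cos(t\|v\|)\, p + \sin(t\|v\|)\, v/\|v\|$ lies in $\SS^n \cap V$ and has $\gamma'(0) = v$. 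Applying this with $V_1 = \RR \times Y$ and $V_2 = \RR \times Z$, together with the obvious distributivity $V_1 \cap V_2 = \RR \times (Y \cap Z)$, I would conclude
\begin{equation*}
   T_p(\overline{Y} \cap \overline{Z}) \seq (\RR \times (Y \cap Z)) \cap p^\perp \seq \bigl((\RR \times Y) \cap p^\perp\bigr) \cap \bigl((\RR \times Z) \cap p^\perp\bigr) \seq T_p \overline{Y} \cap T_p \overline{Z}
\end{equation*}
at any boundary point $p \in \overline{Y \cap Z} \cap \SS_X$; at interior points $p \in Y \cap Z \subset X$ the analogous identity $T_p(Y \cap Z) = Y \cap Z = T_p Y \cap T_p Z$ is tautological. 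The same argument with $\{0\} \times Y$ and/or $\{0\} \times Z$ replacing $\RR \times Y$ and/or $\RR \times Z$ settles pairs~(2) and~(3); pair~(3) with $Y$ and $Z$ swapped then follows by symmetry.

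The only mild subtlety, rather than a serious obstacle, is to make sure that the ``tangent space'' $T_p$ used in Definition~\ref{def.clean.in} at a boundary point of $\SS^n_1$ is interpreted as the full tangent space of the smooth enlargement $\SS^n$ (and not as the inward-pointing cone $T_p^+$ of Definition~\ref{def.T+M}), so that intersecting with the linear subspace $V \subset \RR^{n+1}$ makes sense; with this convention everything goes through by direct linear algebra. Finally, the stronger statement of Lemma~\ref{lem.clean} (that \emph{all} subsets of $\{\overline Y, \overline Z, \SS_Y, \SS_Z\}$ satisfy the tangent-space intersection formula) then follows from Remark~\ref{rem.clean.intersection}, once one observes that the enlarged family $\{\overline Y, \overline Z, \SS_Y, \SS_Z, \overline{Y \cap Z}, \SS_{Y \cap Z}\}$ is a (clean) semilattice.
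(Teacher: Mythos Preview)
Your proof is correct and takes essentially the same approach as the paper: both transfer via $\Theta_n$ to $\SS^n_1 \subset \SS^n$, identify $\overline{Y}$ and $\SS_Y$ as intersections of the sphere with linear subspaces of $\RR^{n+1}$, and verify the tangent-space condition in the enlargement. The paper carries this out by fixing an adapted basis $e_1,\ldots,e_n$, reducing (without loss of generality) to the point $p = \RR_+ e_1$, and writing out explicit spans for each tangent space; your use of the coordinate-free identity $T_p(\SS^n \cap V) = V \cap p^\perp$ packages the same computation more cleanly and handles all points at once.
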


\begin{proof}
The clean intersection property for $\overline{Y}$ and $\overline{Z}$ is obvious in 
$X$, so study this property in $p\in \SS_Y=\overline{Y}\cap \SS$. Let 
$\dim (X) = n$, $\dim (Y) = k$, $\dim (Z) = \ell$, $\dim (Y\cap Z) = m$. We may choose 
a basis $e_1,\ldots,e_n$ of $X$ such that $e_1,\ldots,e_m$ is a basis of $Y\cap Z$, such 
that $e_1,\ldots,e_k$ is a basis of $Y$, and such that $e_1, \ldots , e_m , e_{k+1}, \ldots ,
 e_{\ell+k-m}$ is a basis of $Z$. It remains to check the clean intersection of the above 
pairs in any given $p\in\SS_Y\cap\SS_Z$, and without loss of generality we 
can assume that~$p$ is the ray through~$e_1$.
  As $\Theta_n$ is a diffeomorphism from $\oX$ to $\SS^n_1\subset \RR^{n+1}$, we will argue in this submanifold. To simplify the indices, we assume that the basis of $\RR^{n+1}$ is $e_0,\ldots,e_n$. We calculate
$T_{\Phi_n(p)}\overline{Y}= \spann\{e_0,e_2,e_3,\ldots e_k\}$,  $T_{\Phi_n(p)}(\overline{Y}\cap \overline{Z}) =  \spann\{e_0,e_2,e_3,\ldots e_m\}$,  $T_{\Phi_n(p)}\overline{Z} =  T_{\Phi_n(p)}(\overline{Y}\cap \overline{Z}) \oplus  \spann\{e_{k+1},\ldots,e_{\ell+k-m}\}$, $T_{\Phi_n(p)}\SS_Y=  \spann\{e_2,e_3,\ldots e_k\}$, $T_{\Phi_n(p)}(\SS_y\cap \SS_Z)=  \spann\{e_2,e_3,\ldots e_m\}$, and finally we have  $T_{\Phi_n(p)}\SS_Z= T_{\Phi_n(p)}(\SS_y\cap \SS_Z)\oplus  \spann\{e_{k+1},\ldots,e_{\ell+k-m}\}$. 
From these calculations the claimed statement follows.
\end{proof}

\begin{corollary}\label{cor.ess.lem.clean}
 Let $V,W$ be two linear subspaces of $X$. Then all subsets of $\{\overline V,\overline W, \SS_V,\SS_W\}$ intersect cleanly.
\end{corollary}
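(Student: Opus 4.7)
The plan is to reduce the corollary to the three pairwise statements already established in Lemma \ref{lem.clean.spec} by generating a semilattice from $\{\overline V,\overline W,\SS_V,\SS_W\}$ and invoking Remark \ref{rem.clean.intersection}. The key observation is that, setting $U \ede V\cap W$, the four-element set enlarges under taking intersections to
\begin{equation*}
   \maS \ede \{\, \overline V,\, \overline W,\, \overline U,\, \SS_V,\, \SS_W,\, \SS_U \,\}\,.
\end{equation*}
I would first verify that $\maS$ is actually closed under intersection: $\overline V\cap\overline W = \overline{V\cap W} = \overline U$ is the standard identification (using Remark \ref{rem.emb}); since $\overline W = W\sqcup \SS_W$ and $\SS_V\subset \SS_X$ is disjoint from $W$, one gets $\overline V\cap \SS_W = \SS_V\cap\SS_W = \SS_{V\cap W} = \SS_U$; and analogously for the remaining pairs. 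Thus $\maS$ is a semilattice.

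Next, I would check that $\maS$ is a \emph{clean} semilattice in the sense of Definition \ref{def.clean.semilat}, namely that every pair in $\maS$ intersects cleanly. This is immediate from Lemma \ref{lem.clean.spec}: the three cases (two compactifications, two spheres at infinity, one of each) exhaust all pair types in $\maS$, applied to the appropriate two linear subspaces among $V$, $W$ and $U$.

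Having established that $\maS$ is a clean semilattice of closed \psbmanifolds{} of $\oX$, Remark \ref{rem.clean.intersection} then gives the tangent-space identity
\begin{equation*}
   T_x\Bigl(\bigcap_{j=1}^k P_j\Bigr) \seq \bigcap_{j=1}^k T_x P_j
\end{equation*}
for every $x$ in the common intersection of any finite subfamily $P_1,\ldots,P_k\in \maS$. Since the original set $\{\overline V,\overline W,\SS_V,\SS_W\}$ is contained in $\maS$, any subset thereof intersects cleanly, which is the statement of the corollary. The only mild point to be careful about is that the intersection of such a subset is itself a \psbmanifold{} of $\oX$; but this is built in because every intersection already lies in $\maS$, whose elements are \psbmanifolds. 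No serious obstacle remains — the entire argument is a bookkeeping exercise reducing the multi-element clean intersection to the pairwise statements of Lemma \ref{lem.clean.spec}.
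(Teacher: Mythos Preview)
Your proof is correct. Both your argument and the paper's reduce everything to the pairwise Lemma~\ref{lem.clean.spec}, but the organization differs. The paper treats each subset size separately: for a triple such as $\{\overline V,\overline W,\SS_V\}$ it first replaces $\overline W\cap\SS_V$ by $\SS_{V\cap W}$, then applies Lemma~\ref{lem.clean.spec}(3) once more to the pair $(\overline V,\SS_{V\cap W})$; the four-element case is handled similarly by a further reduction. You instead name the closure under intersection once and for all as the semilattice $\maS=\{\overline V,\overline W,\overline U,\SS_V,\SS_W,\SS_U\}$ with $U=V\cap W$, verify pairwise cleanness in $\maS$ via Lemma~\ref{lem.clean.spec}, and then let Remark~\ref{rem.clean.intersection} deliver the multi-intersection identity in one stroke. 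Your packaging is a bit more conceptual and scales better (no case analysis by cardinality), while the paper's version makes the inductive mechanism behind Remark~\ref{rem.clean.intersection} visible in this concrete instance; in content the two arguments are the same.
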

  
\begin{proof}For subsets with at most one element the statement is trivial. For subsets with two elements, the corollary follows from Lemma~\ref{lem.clean.spec}
  by setting $(Y,Z)=(V,W)$ or $(Y,Z)=(W,V)$.

  In order two prove it for subset with 3 elements, let us consider first the case $\{\overline V,\overline W, \SS_V\}$.
Applying Lemma~\ref{lem.clean.spec} (3) for $Z=V$ and $Y=V\cap W$, we see that 
$\overline V$ and $\SS_{V\cap W}=\overline W\cap \SS_V$ intersect cleanly.  We already know that $\overline W$ and $\SS_V$ intersect cleanly, thus the triple intersects cleanly.  In the case $\{\overline W, \SS_V,\SS_W\}$ we apply Lemma~\ref{lem.clean.spec} (2) for $Z=W$ and $Y=V\cap W$, and we get that $\SS_W$ and $\SS_{V\cap W}=\overline W\cap \SS_V$ intersect cleanly, the rest of the proof is as above. The other cases with three elements are obtained by switching the role of $V$ and $W$.

The set $\{\overline V,\overline W, \SS_V,\SS_W\}$ intersects cleanly by applying the Lemma~\ref{lem.clean.spec} (3) for $Z=V$ and $Y=V\cap W$, which yields the clean intersection of $\overline V$ and $\SS_{V\cap W}= \overline{W}\cap \SS_V\cap \SS_W$. Using the result on the triples above, the statement follows.
\end{proof}

We end by citing a proposition which shows that cleanly intersecting pairs of submanifolds may be linearized by a chart.

\begin{proposition}[{\cite[Prop.~C.3.1]{hormander3}}]\label{hor3.prop}
  Let $P_1$ and $P_2$ be submanifolds of an $n$-dimensional manifold $M$ (without boundary and corners). Then $P_1$ and $P_2$ intersect cleanly if and only if for every $x\in P_1\cap P_2$, there is a chart $\phi:U\to V$ of $M$, $x\in U$ that maps each $P_i\cap U$ to an open subset of a linear subspace $L_i$ of $\RR^n$. \end{proposition}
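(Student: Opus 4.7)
The plan is to prove the two directions separately. The $(\Leftarrow)$ direction is essentially immediate, while the substantive content lies in $(\Rightarrow)$. For $(\Leftarrow)$, suppose such a chart exists at each $x \in P_1 \cap P_2$. Then $\phi(P_1 \cap P_2 \cap U)$ is an open subset of the linear subspace $L_1 \cap L_2$, which is itself a submanifold of $\RR^n$. At any point $y$ of a linear subspace $L \subset \RR^n$ the tangent space $T_y L$ is canonically identified with $L$, so $T_y(L_1 \cap L_2) = L_1 \cap L_2 = T_y L_1 \cap T_y L_2$. Pulling back via $\phi^{-1}$ yields the clean intersection of $P_1$ and $P_2$ at $x$.

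For $(\Rightarrow)$, I would work locally around a fixed $x \in P_1 \cap P_2$. First, apply the classical submanifold chart theorem for the single submanifold $P_1$ to reduce to the case where $P_1$ equals a linear subspace $L \subset \RR^n$ in a neighborhood of $x = 0$. The task then becomes constructing a local diffeomorphism of $(\RR^n, 0)$ that fixes $L$ setwise and straightens $P_2$ onto a linear subspace.

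To build such a diffeomorphism, choose a direct-sum decomposition $\RR^n = E \oplus F_1 \oplus F_2 \oplus G$ with $E = L \cap T_0 P_2$, $L = E \oplus F_1$, and $T_0 P_2 = E \oplus F_2$. The clean intersection hypothesis gives $T_0(L \cap P_2) = E$, so $L \cap P_2$ is locally an $(\dim E)$-dimensional submanifold of $L$ tangent to $E$ at the origin. Near $0$, $P_2$ is the graph over an open $U \subset E \oplus F_2$ of a smooth map $\phi = (\phi_1, \phi_G) : U \to F_1 \oplus G$ with $\phi(0) = 0$ and $d\phi(0) = 0$. A dimension count combined with clean intersection forces $\phi_G(e, 0) = 0$ for all $e$ in a neighborhood of $0$ in $E$, since otherwise the zero set $\{e : \phi_G(e, 0) = 0\} \cong L \cap P_2$ would have dimension strictly less than $\dim E$. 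The map
\[
   \Psi(e, u, v, w) \ede \bigl(e,\, u + \phi_1(e, v),\, v,\, w + \phi_G(e, v)\bigr)
\]
has differential equal to the identity at the origin, hence is a local diffeomorphism; it sends $\{u = 0, w = 0\} = E \oplus F_2$ onto $P_2$, and the vanishing $\phi_G(e, 0) = 0$ ensures that $\Psi$ maps $L = \{v = 0, w = 0\}$ into itself. Consequently $\Psi^{-1}$ is the required diffeomorphism, carrying $P_2$ onto the linear subspace $E \oplus F_2$ while preserving $L$.

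The main technical obstacle is producing a single diffeomorphism that simultaneously linearizes $P_2$ and preserves $L$ setwise. This is precisely where clean intersection (as opposed to merely a submanifold intersection of the correct dimension) enters in an essential way: it forces the compatibility condition $\phi_G(\cdot, 0) \equiv 0$, without which the natural graph-straightening diffeomorphism above would fail to restrict to a self-map of $L$.
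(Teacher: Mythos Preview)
The paper does not give its own proof of this proposition: it merely cites H\"ormander \cite[Prop.~C.3.1]{hormander3} and states the result at the end of Appendix~\ref{app.clean.intersections}. So there is no argument in the paper to compare yours against.

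Your proof is essentially correct and follows the standard route. The $(\Leftarrow)$ direction is fine. In the $(\Rightarrow)$ direction your construction of $\Psi$ is the right one, and the key point---that the graph map satisfies $\phi_G(e,0)=0$ near $0$---is indeed exactly where the clean intersection hypothesis is used. Your justification of this step is slightly imprecise, though: saying ``otherwise the zero set would have dimension strictly less than $\dim E$'' is not quite the correct dichotomy, since a priori the zero set of $\phi_G(\cdot,0)$ could be a $(\dim E)$-dimensional subset of $E$ that is not open. The clean way to argue is that $P_1\cap P_2$ is, by hypothesis, a submanifold with $T_0(P_1\cap P_2)=E$, so the projection $\pi_E:P_1\cap P_2\to E$ has $d\pi_E(0)=\id_E$ and is therefore a local diffeomorphism; its image, which is exactly $\{e:\phi_G(e,0)=0\}$, is then an open neighbourhood of $0$ in $E$. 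With this correction the argument is complete.
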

\fi

\ifarxivversion
\pagebreak[4]
\section{Notation}

\subsection*{Fixed notation}
\begin{itemize}
\item $\NN = \{1, 2, \ldots \}$, $\ZZ_+ = \NN \cup \{0\}$. 

\item $X$ is a fixed finite-dimensional, Euclidean real vector space.

\item $\maF$ is a finite-dimensional semilattice of linear subspaces of $X$
such that $X \notin \maF$ but $\{0\} \in \maF$.

\item $\dist$ is the Euclidean distance on $X$ or on some other
Euclidean vector space. 

\item $d_Y(x) = \dist(x, Y)$ for any linear subspace $Y$ of $X$. 

\item $\dist_{\ol{g}}(x, P)$ is the distance from $x \in M$ to some subset
$P \subset M$ in a \emph{true metric}~$\ol{g}$ on $M$, that is, one that is
defined up to and including the boundary of the manifold with corners $M$.

\item %\begin{equation}\label{eq.def.dist.maF}
   $\delta_{\maF}(x) \ede \min_{Y \in \maF} \{ d_Y(x), 1\} \seq 
   \min\{\dist(x, \unionF), 1\}$
%\end{equation}

\item %\begin{equation}\label{eq.def.semilattices}
    $\overline{\maF} \ede \{\, \oY \, | \ 
    Y \in \maF \, \}$,  

\item  $\SS_{\maF} \ede \{\, \mathbb{S}_Y \, | \ Y \in \maF \, \}$. 

\item 
%\begin{equation}\label{eq.blownup.spaces}
    $\XGV \ede [\oX: \SS_\maF]$ is the Georgescu--Vasy space.

\item $\betaGV : \XGV \to \oX$ is the blow-down map.
    
\item $X_{\maF} \ede  [\oX : \SS_\maF \cup \overline{\maF}].$

\item $\rho(x) \coloneqq  \dist_{\overline{g}}(x, \unionF)$, where $\overline g$ is
a (true) metric on $\XGV$.

\item %\begin{equation}\label{eq.notation.corner}
    $\RR_k^n \ede [0, \infty)^k \times \RR^{n-k}$
    
\item $\SS_k^{n-1} \ede \SS^{n-1} \cap \RR_k^n$.
\item $\Theta_n : \oX \to \mathbb{S}^n_1$ is the bijection used to defined the smooth structure on the spherical compactification of $X$.

Let $P \subset M$ be a \psubmanifold{} of a manifold with
corners~$M$.

\item $N^MP \coloneqq  TM\vert_P/TP$ (the normal
  bundle of $P$ in $M$). 
\item $N^M_+P$ is the image $T^+M\vert_{P}$ in
$N^M P$ (the inward pointing normal fiber bundle of $P$ in
  $M$). 
  
\item $\SS(N^M_+P)=$ the set of rays starting at $0$ in $N^M_+P$ (the 
inward pointing spherical normal bundle of $P$ in $M$). 
\item If $\maP = (P_1, P_2, \ldots, P_k)$ is a $k$-tuple of
closed subset of $M$ with $P_1$ a \psbmanifold{} of $M$, then
$\maP' \coloneqq \beta^*(\maP \smallsetminus \{P_1\} ) 
    \coloneqq \bigl(\beta^*(P_i)\bigr)_{i=2}^k$ is the
    \emph{pull-back} of $\maP$.
    
\item $\psi_Y : [\oX: \SS_Y] \to \oXY$ is the unique smooth
  extension of the projection $X \to X/Y$.
\item $\doublesharp{B}$: If $B$ is a non-degenerate symmetric bilinear form on a finite-dimensional vector space $V$, then $\doublesharp{B}$ is the associated non-degenerate symmetric bilinear form on the dual $V^*$.
\end{itemize}

\subsection*{Generic notation}

\begin{itemize}
\item $M$ is a manifold with corners.

\item $[M: P]$ is the blow-up of $M$ with respect to a closed
\psbmanifold{} $P \subset M$.

\item $Y$ is a general finite-dimensional real vector space, usually
a subspace of $X$.

\item $\SS_Y$ is the sphere at infinity of $Y$.

\item $\oY = Y \cup \SS_Y$ is the spherical compactification of $Y$.

\item $|A|$ is the set of elements of a set $A$.

\item $x_H$ is a defining function of the hyperface $H$ of some
manifold with corners.

\item $\beta$, usually decorated with various indices, denotes some
blow-down map, such as $\beta_{M, P} : [M: P] \to M$.
\end{itemize}
\fi

\bernd{Number of appendices: \arabic{section}}

\addtostream{baaux}{\protect\newcommand{\protect\arabicnumberappendices}{\arabic{section}}}
%\addtostream{baaux}{\protect\newlabel{#1}{{\thevortragsnum}{\thepage}{}{vortrag.\thevortragsnum}{}}}
\closeoutputstream{baaux}

%%%%%%%%%%%%%%%%%%%%%%%%%%%%%%%%%%%%%%%%%%%%%%%%%%%%%%%
\ifarxivversion\else
\subsection*{Conflict of interest}
On behalf of all authors, the corresponding author (Bernd Ammann) states that there is no conflict of interest.

\subsection*{Data availability statement}
On behalf of all authors, the corresponding author (Bernd Ammann) states that there are no data associated with this article.
\fi

\bibliographystyle{plain}
%\bibliography{schrodinger,schrSpace}
\bibliography{schrReg}

%%%%%%%%%%%%%%%%%%%%%%%%%%%%%%%%%%%%%%%%%%%%%%%%%%%%%

\end{document}